\theoremstyle{plain}
\newtheorem{theorem}{Theorem}[section]
\newtheorem{definition}[theorem]{Definition}
\newtheorem{lemma}[theorem]{Lemma}
\newtheorem{proposition}[theorem]{Proposition}
\newtheorem{hyp}[theorem]{Assumption}
\theoremstyle{remark}
\newtheorem{remark}[theorem]{Remark}
\newtheorem*{notation}{Notation}
\def\R{{\mathbf R}}
\def\N{{\mathbf N}}
\def\Sch{{\mathcal S}}
\def\O{\mathcal O}
\def\({\left(}
\def\){\right)}
\def\<{\left\langle}
\def\>{\right\rangle}
\def\le{\leqslant}
\def\ge{\geqslant}
\def\Eq#1#2{\mathop{\sim}\limits_{#1\rightarrow#2}}
\def\Tend#1#2{\mathop{\longrightarrow}\limits_{#1\rightarrow#2}}
\def\d{{\partial}}
\def\eps{\varepsilon}
\def\l{\lambda}
\def\om{\omega}
\def\si{{\sigma}}
\DeclareMathOperator{\IM}{Im}
\numberwithin{equation}{section}
\begin{document}

\title[Nonlinear coherent states and
  Ehrenfest time]{Nonlinear coherent states and
  Ehrenfest time for  Schr\"odinger equation}
\author[R. Carles]{R\'emi Carles}
\address[R. Carles]{Univ. Montpellier~2\\Math\'ematiques
\\CC~051\\F-34095 Montpellier}
\address{CNRS, UMR 5149\\  F-34095 Montpellier\\ France}
\email{Remi.Carles@math.cnrs.fr}
\author[C. Fermanian]{Clotilde~Fermanian-Kammerer}
\address[C. Fermanian]{LAMA UMR CNRS 8050,
Universit\'e Paris EST\\
61, avenue du G\'en\'eral de Gaulle\\
94010 Cr\'eteil Cedex\\ France}
\email{Clotilde.Fermanian@univ-paris12.fr}
\begin{abstract}
  We consider the propagation of wave packets for
  the nonlinear Schr\"odinger equation, in the semi-classical
  limit. We establish the existence of a critical size for the initial
  data, in terms of the Planck constant: if the initial data are too
  small, the nonlinearity is negligible up to the Ehrenfest time. If
  the initial data have the critical size, then at leading order the
  wave function propagates 
  like a coherent state whose envelope is given by a nonlinear
  equation, up to a time of the same order as the Ehrenfest time. We
  also prove a nonlinear superposition principle for these nonlinear
  wave packets. 
\end{abstract}
\thanks{This work was supported by the French ANR project
  R.A.S. (ANR-08-JCJC-0124-01)}  
\maketitle

\section{Introduction}
\label{sec:intro}
We consider semi-classical limit $\eps\to 0$ for the nonlinear
Schr\"odinger equation 
\begin{equation}\label{eq:NLS0}
 \left\{
\begin{aligned}
     i\eps\d_t \psi^\eps +\frac{\eps^2}{2}\Delta \psi^\eps&=V(x)\psi^\eps
     +\l |\psi^\eps |^{2\si} \psi^\eps, \quad
(t,x)\in
    {\R}_+\times {\R}^d, \\
     \psi^\eps_{\mid t=0} &= \psi^\eps_{0},
  \end{aligned}
\right.
\end{equation}
with  $\l\in\R$, $d\ge 1$. The nonlinearity is  energy
subcritical  ($\sigma< 2/(d-2)$ if $d\ge 3$). This equation arises for
instance as a model for Bose--Einstein Condensation, where, among
other possibilities, $V$  may be
exactly a harmonic potential, or a truncated harmonic potential (hence
not exactly quadratic); see e.g. \cite{DGPS,JP01}.   
\smallbreak

Assuming that $V$ is smooth and subquadratic (this notion is made precise
below, see Assumption~\ref{hyp:V}), we know that for each $\eps>0$,
\eqref{eq:NLS0} has a 
unique global solution in the energy space  
$$\Sigma=\left\{f\in H^1(\R^d),\;\;x\mapsto |x|f(x)\in L^2(\R^d) \right\},$$
provided $\psi_0^\eps\in \Sigma$ and, either $\sigma<2/d$, or
$\left(\sigma\ge 2/d \;\;{\rm and }\;\; \lambda\ge 0\right)$, while if
$\l\ge 2/d$ and $\l<0$, finite time blow-up may occur; see
\cite{Ca-p}. We assume that the initial data $\psi^\eps_0$ is a
localized  wave packet of the form
\begin{equation}\label{data0}
\psi^\eps_0(x)=\eps^\beta\times \eps^{-d/4} a\left(\frac{x-x_0}{\sqrt\eps}\right)
e^{i{(x-x_0)\cdot\xi_0/\eps}},\quad a\in{\mathcal S}({\R}^d),\quad
x_0,\xi_0\in \R^d.
\end{equation}
Such data, which are called \emph{semi-classical wave packets} (or
\emph{coherent states}),
have been extensively 
studied in the linear case (see
e.g. \cite{BR01,CR97,CR06,L86,P97}). In particular, 
Gaussian wave packets  are used in numerical simulation of quantum
chemistry like Initial Value Representations methods. On this subject,
the reader can refer to the recent papers \cite{Rob-p,Rou-p,SR09} 
 where overview and  references on the
topics can be found. These methods rely on the fact that if the data is a 
wave packet, then the solution of the linear equation ($\l=0$)
associated with \eqref{eq:NLS0} still is a wave packet at
leading order up to times of order $C\log 
 \left(\frac{1}{\eps}\right)$: such a large (as $\eps\to 0$) time is
 called \emph{Ehrenfest time}, see e.g. \cite{BGP99,HJ00,HJ01}.  Our
 aim here is to  investigate what remains of these 
facts in the nonlinear case ($\lambda\not=0$). 
\smallbreak

In the present nonlinear setting, a new parameter has to be
considered: the size of the initial data, hence the factor
$\eps^\beta$ in \eqref{data0}. The goal of this paper is to justify a
notion of criticality for $\beta$: for
$\beta>\beta_c:=1/(2\sigma)+d/4$, the initial 
data are too small to ignite the nonlinearity at leading order, and
the leading order behavior of $\psi^\eps$ as $\eps\to 0$ is the same
as in the linear case $\l=0$, up to Ehrenfest time. On the other hand,
if $\beta =\beta_c$, 
the function $\psi^\eps$ is given at leading order by a wave packet
whose envelope satisfies a \emph{nonlinear} equation, up to a
nonlinear analogue of the Ehrenfest time. We show moreover 
a nonlinear superposition principle: when the initial data is the sum
of two wave packets of the form \eqref{data0}, then $\psi^\eps$ is
approximated at leading order by the sum of the approximations
obtained in the case of a single initial coherent state. 
\smallbreak

Up to changing $\psi^\eps$ to $\eps^{-\beta}\psi^\eps$, we may assume
that the initial data are of order $\O(1)$ in $L^2(\R^d)$, and we
consider
\begin{equation}
  \label{eq:NLS}
   \left\{
\begin{aligned}
     i\eps\d_t \psi^\eps +\frac{\eps^2}{2}\Delta \psi^\eps&=V(x)\psi^\eps
     +\l\eps^{\alpha} |\psi^\eps |^{2\si} \psi^\eps, \quad
(t,x)\in
    {\R}_+\times {\R}^d, \\
     \psi^\eps(0,x) &= \eps^{-d/4} a\left(\frac{x-x_0}{\sqrt\eps}\right)
e^{i{(x-x_0)\cdot\xi_0/\eps}},
  \end{aligned}
\right.
\end{equation}
where $\alpha =2\beta\sigma$.

\subsection{The linear case}
\label{sec:lin1}
In this paragraph, we assume $\l=0$:
\begin{equation}
  \label{eq:Schrod-lin}
    i\eps\d_t \psi^\eps +\frac{\eps^2}{2}\Delta \psi^\eps=V(x)\psi^\eps
    \quad;\quad 
     \psi^\eps(0,x) = \eps^{-d/4} a\left(\frac{x-x_0}{\sqrt\eps}\right)
e^{i{(x-x_0)\cdot\xi_0/\eps}}.
\end{equation}
 The assumption we make on the external potential throughout
this paper (even when $\l\not =0$) is the
following:
\begin{hyp}\label{hyp:V}
  The external potential $V$ is smooth, real-valued, and subquadratic:
  \begin{equation*}
 V\in C^\infty(\R^d;\R)\quad \text{and}\quad   \d_x^\gamma V\in
 L^\infty\(\R^d\),\quad \forall |\gamma|\ge 2.  
  \end{equation*}
\end{hyp}
Consider the classical trajectories associated with
the Hamiltonian $\frac{|\xi|^2}{2}+V(x)$:
\begin{equation}\label{eq:traj}
 \dot x(t)=\xi(t),\;\;\dot \xi(t)=-\nabla V(x(t));\quad x(0)=x_0,\;\xi(0)=\xi_0.
 \end{equation}
 These trajectories satisfy
 $$\frac{|\xi(t)|^2}{ 2 }+V(x(t))=\frac{|\xi_0|^2}{2} +V(x_0),\quad
 \forall t\in \R.$$
 The fact that the potential is subquadratic implies that the
 trajectories grow at most exponentially in time.
 
 \begin{notation}
 For two positive numbers $a^\eps$ and $b^\eps$,
the notation $ a^\eps\lesssim b^\eps$
means that there exists $C>0$ \emph{independent of} $\eps$ such that
for all $\eps\in ]0,1]$, $a^\eps\le Cb^\eps$.  
\end{notation}
 
 \begin{lemma}\label{lem:traj}
Let $(x_0,\xi_0)\in \R^d\times \R^d$. Under Assumption~\ref{hyp:V},
\eqref{eq:traj} has a unique global, smooth solution 
$(x,\xi)\in 
C^\infty(\R;\R^d)^2$. It grows at
most exponentially:    
 \begin{equation}\label{growthtraj}
 \exists C_0>0,\quad \left|x(t)\right|+\left|\xi(t)\right|\lesssim
 {\rm e}^{C_0t},\quad \forall t\in \R.
 \end{equation}
 \end{lemma}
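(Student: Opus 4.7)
The plan is to argue local existence and smoothness by the standard Cauchy--Lipschitz theory, and then to promote the local solution to a global one by an a priori exponential bound obtained through Gronwall's inequality; the subquadraticity hypothesis enters precisely at the Gronwall step.

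First, I would observe that since $V \in C^\infty(\R^d;\R)$, the vector field $(x,\xi) \mapsto (\xi, -\nabla V(x))$ on $\R^d \times \R^d$ is smooth, hence locally Lipschitz. The Cauchy--Lipschitz theorem therefore yields a unique maximal solution $(x,\xi)$ of class $C^\infty$ on some open interval containing $0$, with the usual blow-up alternative: either the solution is global, or $|x(t)|+|\xi(t)|$ escapes to infinity in finite time.

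Next, I would extract the crucial quantitative consequence of Assumption~\ref{hyp:V}. Since $\d_x^\gamma V \in L^\infty(\R^d)$ for all $|\gamma| = 2$, the gradient $\nabla V$ is globally Lipschitz on $\R^d$, and in particular there exists a constant $C>0$ such that
\begin{equation*}
  |\nabla V(x)| \le |\nabla V(0)| + C|x| \lesssim 1 + |x|, \quad \forall x\in\R^d.
\end{equation*}
This linear growth is the key mechanism: it allows one to control $\dot\xi$ by the state itself, even though energy conservation alone is not enough (for instance when $V$ is quadratic and unbounded from below along directions where it is concave).

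I would then set $f(t) := 1 + |x(t)|^2 + |\xi(t)|^2$ and differentiate along the flow:
\begin{equation*}
  \dot f(t) = 2 x(t)\cdot \xi(t) - 2 \xi(t)\cdot \nabla V\bigl(x(t)\bigr) \lesssim 1 + |x(t)|^2 + |\xi(t)|^2 = f(t),
\end{equation*}
where I have used Cauchy--Schwarz, Young's inequality, and the linear growth of $\nabla V$. Gronwall's lemma then yields $f(t) \le f(0)\,e^{C_0 |t|}$ for some $C_0>0$, from which \eqref{growthtraj} follows immediately. Since $|x(t)|+|\xi(t)|$ cannot blow up in finite time, the blow-up alternative rules out any finite maximal interval, so the solution is global in $t\in\R$ and smooth.

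The only mild subtlety is that the Gronwall bound must be run both forward and backward in time; this is harmless since the system is autonomous and time-reversible (replacing $t$ by $-t$ and $\xi$ by $-\xi$). No finer structure such as symplecticity or energy conservation is needed for this statement.
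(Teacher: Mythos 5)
Your proof is correct, but it takes a slightly different route from the paper's. The paper exploits the Hamiltonian structure: multiplying $\ddot x+\nabla V(x)=0$ by $\dot x$ gives conservation of $\tfrac12|\dot x|^2+V(x(t))$, and the subquadratic bound $|V(y)|\lesssim \<y\>^2$ then yields $|\dot x(t)|\lesssim \<x(t)\>$, from which the exponential bound on $x$ (and hence on $\xi=\dot x$) follows; the local well-posedness and blow-up alternative are left implicit in that sketch. You instead bypass energy conservation entirely: from the bounded Hessian you extract the linear growth $|\nabla V(x)|\lesssim 1+|x|$ and run Gronwall directly on $f(t)=1+|x(t)|^2+|\xi(t)|^2$, using the blow-up alternative to conclude globality, and time reversal for $t<0$. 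Both arguments hinge on Assumption~\ref{hyp:V} and give the same exponential rate up to constants; the paper's version is a one-line consequence of the conserved energy, while yours is marginally more robust, since it would survive, e.g., a time-dependent or non-conservative perturbation with uniformly bounded second derivatives, where no conserved energy is available. Your explicit treatment of local existence, uniqueness, smoothness and the escape criterion is a welcome completion of what the paper only sketches.
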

 \begin{proof}[Sketch of the proof] We explain the exponential control
   only. We infer from \eqref{eq:traj} that $x$ solves an Hamiltonian
   ordinary differential equation,
   \begin{equation*}
     \ddot x(t)+\nabla V\(x(t)\)=0. 
   \end{equation*}
 Multiply this equation by $\dot x(t)$,
 \begin{equation*}
   \frac{d}{dt}\(\(\dot x\)^2 +V\(x(t)\)\)=0,
 \end{equation*}
 and notice that in view of
 Assumption~\ref{hyp:V}, $V(y)\lesssim \<y\>^2$:
 \begin{equation*}
   \dot x(t) \lesssim \<x(t)\>,
 \end{equation*}
and the estimate follows.
\end{proof}
\begin{remark}
  The case $V(x)=-|x|^2$ shows that  the result of
  Lemma~\ref{lem:traj} is sharp. 
\end{remark}
 We associate with these trajectories  the \emph{classical action}
\begin{equation}\label{eq:action}
S(t)=\int_0^t \left( \frac{1}{2} |\xi(s)|^2-V(x(s))\right)\,ds.
\end{equation}
We observe that if we change the unknown function $\psi^\eps$ to
$u^\eps$ by
\begin{equation}
  \label{eq:chginc}
  \psi^\eps(t,x)=\eps^{-d/4} u^\eps 
\left(t,\frac{x-x(t)}{\sqrt\eps}\right)e^{i\left(S(t)+\xi(t)\cdot
    (x-x(t))\right)/\eps},
\end{equation}
then, in terms of $u^\eps=u^\eps(t,y)$, \eqref{eq:Schrod-lin} is equivalent 
\begin{equation}\label{eq:ueps0}
i\d_t
u^\eps+\frac{1}{2}\Delta u^\eps=V^\eps(t,y)
u^\eps\quad ;\quad u^\eps(0,y) = a(y),
\end{equation}
where the external time-dependent potential $V^\eps$ is given by
\begin{equation}
  \label{eq:Veps}
  V^\eps(t,y)= \frac{1}{\eps}\(V(x(t)+
\sqrt{\eps}y)-V(x(t))-\sqrt{\eps}\<\nabla V(x(t)),y\>\).
\end{equation}
This expression reveals the first terms of the Taylor expansion of $V$
about the point $x(t)$. Passing formally to the limit, $V^\eps$
converges to the Hessian of $V$ at $x(t)$ evaluated at $(y,y)$. One
does not even need to pass to the limit if $V$ is a polynomial of
degree at most two: in that case, we see that the solution $\psi^\eps$
remains exactly a coherent state for all time. 
Let us denote by $Q(t)$ the symmetric matrix
$$Q(t)={\rm Hess}\,V(x(t)).$$
It is well-known that if $v$ solves
\begin{equation}\label{eq:vlinear}
i\d_tv+\frac{1}{2}\Delta v=\frac{1}{2}\< Q(t)y,y\> v\quad
;\quad v(0,y)=a(y),
\end{equation}
then the function
\begin{equation}
  \label{eq:philin}
  \varphi^\eps_{\rm lin}(t,x)=\eps^{-d/4} v
\left(t,\frac{x-x(t)}{\sqrt\eps}\right)e^{i\left(S(t)+\xi(t)\cdot
    (x-x(t))\right)/\eps}
\end{equation}
approximates $\psi^\eps$ for large time in the sense that there exists
$C>0$ independent of $\eps$ such that
\begin{equation*}
  \|\psi^\eps(t,\cdot)-\varphi^\eps_{\rm
  lin}(t,\cdot)\|_{L^2(\R^d)}\le C\sqrt\eps e^{Ct}.
\end{equation*}
See e.g. \cite{BGP99,CR97,CR06,CR07,H80,HJ00,HJ01} and references therein. We
give a short proof of this estimate, which can be considered as the
initial step toward the nonlinear analysis which is presented in the
next paragraph. We first notice that since $V$ is subquadratic, we
have the following pointwise estimate:
\begin{equation}\label{eq:restepot}
 \left\lvert  V^\eps(t,y) -\frac{1}{2}\< Q(t)y,y\>\right\rvert\le \underline
 C 
 \sqrt \eps |y|^3,
\end{equation}
for some constant $\underline C$ independent of $t$. The error
$r^\eps_{\rm lin}=u^\eps-v$ satisfies
\begin{equation*}
  i \d_t r^\eps_{\rm lin} +\frac{1}{2}\Delta r^\eps_{\rm lin} = V^\eps
  u^\eps - \frac{1}{2}\< Q(t)y,y\> v = V^\eps r^\eps_{\rm lin} + 
\(V^\eps - \frac{1}{2}\< Q(t)y,y\>\) v,
\end{equation*}
along with the initial value $r^\eps_{\rm lin\mid t=0}=0$. Since
$V^\eps$ is real-valued, the classical energy estimate for
Schr\"odinger equations yields, in view of \eqref{eq:restepot},
\begin{equation*}
  \lVert r^\eps_{\rm lin}\rVert_{L^\infty([0,t];L^2(\R^d))} \lesssim \sqrt \eps
  \int_0^t \lVert \lvert y\rvert^3v(\tau,y)\rVert_{L^2(\R^d)}d\tau. 
\end{equation*}
Since $Q$ is bounded ($V$ is subquadratic), we have the control
\begin{equation*}
  \lVert \lvert y\rvert^3v(\tau,y)\rVert_{L^2(\R^d)} \le C e^{C\tau}
\end{equation*}
for some constant $C>0$; see Proposition~\ref{prop:v} below. We then
have to notice 
that the wave packet scaling is $L^2$-unitary:
\begin{equation*}
  \|\psi^\eps(t,\cdot)-\varphi^\eps_{\rm
  lin}(t,\cdot)\|_{L^2(\R^d)} = \|u^\eps(t,\cdot)-v(t,\cdot)\|_{L^2(\R^d)}.
\end{equation*}
To summarize, we have:
\begin{lemma}
  Let $d\ge 1$ and $a\in \Sch(\R^d)$. There exists $C>0$ independent
  of $\eps$ such that 
\begin{equation}\label{eq:erreurlin}
  \|\psi^\eps(t,\cdot)-\varphi^\eps_{\rm
  lin}(t,\cdot)\|_{L^2(\R^d)}\le C\sqrt\eps e^{Ct}.
\end{equation}
In particular, there exists $c>0$ independent of $\eps$ such that
\begin{equation*}
 \sup_{0\le t\le c\log\frac{1}{\eps}}\|\psi^\eps(t,\cdot)-\varphi^\eps_{\rm
  lin}(t,\cdot)\|_{L^2(\R^d)} \Tend \eps 0 0 . 
\end{equation*}
\end{lemma}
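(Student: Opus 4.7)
The plan is to work in the wave packet frame defined by \eqref{eq:chginc}. Under this change of unknown, $\psi^\eps$ corresponds to $u^\eps$ solving \eqref{eq:ueps0}, while the approximation $\varphi^\eps_{\rm lin}$ from \eqref{eq:philin} corresponds to $v$ solving \eqref{eq:vlinear}. Since the rescaling $y=(x-x(t))/\sqrt\eps$ combined with the $\eps^{-d/4}$ prefactor is an $L^2$-isometry, one has
\begin{equation*}
\|\psi^\eps(t,\cdot)-\varphi^\eps_{\rm lin}(t,\cdot)\|_{L^2(\R^d)}=\|u^\eps(t,\cdot)-v(t,\cdot)\|_{L^2(\R^d)},
\end{equation*}
so the task reduces to controlling the remainder $r^\eps_{\rm lin}:=u^\eps-v$ in $L^\infty_tL^2_y$.

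Subtracting \eqref{eq:vlinear} from \eqref{eq:ueps0}, I would then derive the equation
\begin{equation*}
i\d_t r^\eps_{\rm lin}+\tfrac{1}{2}\Delta r^\eps_{\rm lin}=V^\eps r^\eps_{\rm lin}+\(V^\eps-\tfrac{1}{2}\<Q(t)y,y\>\)v,\qquad r^\eps_{\rm lin\mid t=0}=0.
\end{equation*}
Because $V^\eps$ is real-valued, multiplying by $\overline{r^\eps_{\rm lin}}$, taking imaginary parts and integrating kills the contribution of $V^\eps r^\eps_{\rm lin}$, so the standard $L^2$ energy identity gives
\begin{equation*}
\|r^\eps_{\rm lin}(t,\cdot)\|_{L^2}\le\int_0^t\Bigl\|\bigl(V^\eps(\tau,\cdot)-\tfrac{1}{2}\<Q(\tau)y,y\>\bigr)v(\tau,\cdot)\Bigr\|_{L^2}d\tau.
\end{equation*}
Inserting the Taylor pointwise bound \eqref{eq:restepot} and the weighted-moment estimate $\||y|^3 v(\tau,\cdot)\|_{L^2}\le Ce^{C\tau}$ provided by Proposition~\ref{prop:v}, I would obtain
\begin{equation*}
\|r^\eps_{\rm lin}(t,\cdot)\|_{L^2}\lesssim\sqrt\eps\int_0^t e^{C\tau}d\tau\lesssim\sqrt\eps e^{Ct},
\end{equation*}
which is exactly \eqref{eq:erreurlin}. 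The Ehrenfest-time statement then follows immediately: any choice $c<1/(2C)$ ensures that for $0\le t\le c\log(1/\eps)$ one has $\sqrt\eps e^{Ct}\le\eps^{1/2-cC}\to 0$ as $\eps\to 0$.

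The main obstacle in this scheme is the weighted moment bound $\||y|^3 v\|_{L^2}\le Ce^{C\tau}$ on the profile $v$: since $Q(t)$ is merely bounded and may be indefinite, the quadratic Schr\"odinger flow does not preserve $L^2$-moments and the best one can hope for is exponential-in-time growth. Obtaining it cleanly requires commuting the natural Heisenberg-type first-order operators associated with the quadratic flow (essentially the propagated position and momentum operators) with \eqref{eq:vlinear} and closing a Gr\"onwall argument on the resulting system, which is precisely the content of Proposition~\ref{prop:v}. Everything else in the argument is routine energy estimation.
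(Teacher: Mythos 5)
Your argument is exactly the paper's: pass to the moving-frame unknowns $u^\eps$ and $v$, use the $L^2$-unitarity of the wave packet scaling, apply the standard energy estimate to $r^\eps_{\rm lin}=u^\eps-v$ (the real potential $V^\eps$ dropping out), and control the source by the pointwise Taylor bound \eqref{eq:restepot} together with the moment estimate of Proposition~\ref{prop:v}. The proof is correct and essentially identical to the one given in the paper.
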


\subsection{The  nonlinear case}

We now consider the nonlinear situation
$\l\not=0$. Resuming the same change of unknown function
\eqref{eq:chginc}, then adapting the above computation leads to
\begin{equation}\label{eq:ueps}
i\d_t
u^\eps+\frac{1}{2}\Delta u^\eps=V^\eps
u^\eps+\l\eps^{\alpha-\alpha_c} |u^\eps|^{2\sigma}u^\eps,
\end{equation}
where $V^\eps$ is given by \eqref{eq:Veps} as in the linear case, and
\begin{equation}\label{def:alpha}
\alpha_c=1+\frac{d\si}{2}.
\end{equation} 
The real number  $\alpha_c$ appears 
 as a critical exponent. In the case $\alpha>\alpha_c$, we can
 approximate the nonlinear solution $u^\eps$ by the same function $v$
 as in the linear case, given by \eqref{eq:vlinear}. 
The space $\Sigma$ 
will turn out to be quite natural for energy estimates.
Introduce the operators
\begin{equation*}
  A^\eps(t) =  \sqrt \eps \nabla -i\frac{\xi(t)}{\sqrt
    \eps}\quad ;\quad
  B^\eps(t)=\frac{x-x(t)}{\sqrt\eps}.
\end{equation*}
Note that $A$ and $B$ are essentially $\nabla$ and $x$, up to the wave
packet scaling, in the moving frame. From this point of view, our
energy space is quite different from the one associated with the
Lyapounov functional considered in \cite{FGJS04}, and more related to
the one considered in \cite{JFGS06}, since we pay
attention to the localization of the wave packet, through
$B^\eps$. For $f\in \Sigma$, we set  
$$\|f\|_{\mathcal H}=\|f\|_{L^2(\R^d)}+\|A^\eps(t)f\|_{L^2(\R^d)}+\|B^\eps(t)
f\|_{L^2(\R^d)},$$
a notation where we do not emphasize the fact that this norm
depends on $\eps$ and $t$.  

\begin{proposition}\label{linearizablecase}
Let $d\ge 1$, $a\in \Sch(\R^d)$. Suppose  that
$\alpha>\alpha_c$. There exist $C,C_1>0$ independent of
$\eps$, and $\eps_0>0$ such that for all $\eps\in ]0,\eps_0]$,
\begin{equation*}
  \lVert \psi^\eps(t) - \varphi^\eps_{\rm
    lin}(t)\rVert_{\mathcal H}\lesssim\eps^\gamma 
  e^{C_1t},\quad 0\le t\le C\log\frac{1}{\eps},\quad 
\text{where }\gamma=\min\(\frac{1}{2},\alpha-\alpha_c\).
\end{equation*}
In particular, there exists $c>0$ independent of $\eps$ such that 
\begin{equation*}
  \sup_{0\le t\le c\log\frac{1}{\eps}}\lVert \psi^\eps(t) - \varphi^\eps_{\rm
    lin}(t)\rVert_{\mathcal H} \Tend \eps 0 0 . 
\end{equation*}
\end{proposition}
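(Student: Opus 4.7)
The plan is to transport the question into the moving, rescaled frame of the classical trajectory via \eqref{eq:chginc}, so that $\psi^\eps$ becomes the solution $u^\eps$ of \eqref{eq:ueps} and $\varphi^\eps_{\rm lin}$ becomes the solution $v$ of \eqref{eq:vlinear}. A direct computation shows that $A^\eps(t)$ and $B^\eps(t)$ read as $\nabla_y$ and multiplication by $y$ in the scaled frame, so that the wave packet scaling is an isometry between $\mathcal H$ (in $x$) and $\Sigma$ (in $y$), and in particular
\begin{equation*}
  \|\psi^\eps - \varphi^\eps_{\rm lin}\|_{\mathcal H} = \|r^\eps\|_{L^2}+\|\nabla_y r^\eps\|_{L^2}+\|yr^\eps\|_{L^2},\qquad r^\eps := u^\eps-v.
\end{equation*}
The error $r^\eps$ vanishes at $t=0$ and obeys
\begin{equation*}
  i\d_t r^\eps + \tfrac12\Delta r^\eps = V^\eps r^\eps + \bigl(V^\eps-\tfrac12\<Q(t)y,y\>\bigr)v + \lambda\eps^{\alpha-\alpha_c}|u^\eps|^{2\sigma}u^\eps.
\end{equation*}

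The core of the argument is then a weighted energy estimate for $r^\eps$ in $\Sigma$. Since $V^\eps$ is real-valued, the $L^2$ estimate is immediate. Commuting $\nabla_{y_j}$ with the equation creates the extra term $(\d_{y_j}V^\eps)r^\eps$; subquadraticity of $V$ (Taylor-expanding $\nabla V$ about $x(t)$) gives the pointwise bound $|\nabla_y V^\eps(t,y)|\le C|y|$ uniformly in $\eps$ and $t$, so this term is controlled by $\|yr^\eps\|_{L^2}$. Commuting multiplication by $y_j$ uses $[y_j,\tfrac12\Delta]=-\d_{y_j}$ and is absorbed by $\|\nabla_y r^\eps\|_{L^2}$. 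Adding the three estimates produces the closed differential inequality
\begin{equation*}
  \frac{d}{dt}\|r^\eps\|_\Sigma \le K\|r^\eps\|_\Sigma + \bigl\|(V^\eps-\tfrac12\<Qy,y\>)v\bigr\|_\Sigma + |\lambda|\,\eps^{\alpha-\alpha_c}\bigl\||u^\eps|^{2\sigma}u^\eps\bigr\|_\Sigma .
\end{equation*}

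The estimate \eqref{eq:restepot}, together with its differentiated version, combined with the exponential-in-$t$ control on the weighted Sobolev norms of $v$ from Proposition~\ref{prop:v}, yields $\|(V^\eps-\tfrac12\<Qy,y\>)v\|_\Sigma\lesssim\sqrt\eps\,e^{Ct}$. For the nonlinear source I would run a bootstrap: fix $M$ and let $T^\eps$ be the largest time in $[0,C\log(1/\eps)]$ on which $\|u^\eps(t)\|_\Sigma\le Me^{C_1 t}$. Energy subcriticality $\sigma<2/(d-2)$ gives $\Sigma\hookrightarrow L^{4\sigma+2}$, and the pointwise bound $|\nabla(|u|^{2\sigma}u)|\le(2\sigma+1)|u|^{2\sigma}|\nabla u|$ lets one estimate $\||u^\eps|^{2\sigma}u^\eps\|_\Sigma\lesssim\|u^\eps\|_\Sigma^{2\sigma+1}$; on $[0,T^\eps]$ the nonlinear source is thus $\lesssim\eps^{\alpha-\alpha_c}e^{(2\sigma+1)C_1 t}$. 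Grönwall then delivers $\|r^\eps(t)\|_\Sigma\le C'\eps^\gamma e^{C_1 t}$ with $\gamma=\min(1/2,\alpha-\alpha_c)$. Combined with the a priori bound $\|v(t)\|_\Sigma\lesssim e^{Ct}$, this keeps $\|u^\eps\|_\Sigma$ strictly below its bootstrap threshold as long as $\eps^\gamma e^{C_1 t}\ll 1$; choosing $c<\gamma/C_1$ gives $T^\eps\ge c\log(1/\eps)$, and the proposition follows. I expect the delicate point to be the careful tracking of the constant $C_1$ governing the exponential growth, since it must be beaten by the power $\eps^\gamma$ on the Ehrenfest scale; the non-integer nature of the nonlinearity, while requiring a Moser-type argument, is standard in this context.
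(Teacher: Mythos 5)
Your overall architecture — pass to the moving rescaled frame, note that the wave-packet transform intertwines $A^\eps(t),B^\eps(t)$ with $\nabla_y$ and multiplication by $y$ (so that the $\mathcal H$-norm of $\psi^\eps-\varphi^\eps_{\rm lin}$ is the $\Sigma$-norm of $r^\eps=u^\eps-v$), estimate the linear source via \eqref{eq:restepot} and its differentiated analogue together with Proposition~\ref{prop:v}, and close a bootstrap plus Gr\"onwall on the Ehrenfest scale — is exactly the paper's strategy; the paper merely works in the original variables with the operators $A^\eps,B^\eps$ and bootstraps on $\|w^\eps(t)\|_{L^\infty}\le \eps^{-1/4}e^{C_0t}$, which in $d=1$ is equivalent to your $\Sigma$-bootstrap via $H^1(\R)\hookrightarrow L^\infty(\R)$. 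In dimension one your sketch is therefore sound (up to the constant bookkeeping you yourself flag: one should note, as in the paper, that $\eps^{\alpha-\alpha_c}e^{2\si C_0 t}\le 1$ for $t\le \kappa\log\frac1\eps$, so that the nonlinear contribution inside the Gr\"onwall exponential stays $O(1)$ and does not degrade the $e^{C_1t}$ bound).

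The genuine gap is the treatment of the nonlinear source for $d\ge 2$, where the proposition is also asserted. Your key estimate $\||u^\eps|^{2\si}u^\eps\|_{\Sigma}\lesssim \|u^\eps\|_{\Sigma}^{2\si+1}$ does not follow from the ingredients you invoke. First, the embedding $\Sigma\subset H^1\hookrightarrow L^{4\si+2}$ requires $4\si+2\le \frac{2d}{d-2}$, i.e. $\si\le \frac1{d-2}$, which is strictly stronger than energy subcriticality $\si<\frac2{d-2}$ (e.g. $d=3$, $1<\si<2$ is allowed in the statement but excluded by your embedding). Second, and more fundamentally, the gradient and weight parts cannot be closed with $H^1$-level information alone: from the pointwise bound $|\nabla(|u|^{2\si}u)|\lesssim |u|^{2\si}|\nabla u|$, H\"older with $\nabla u\in L^2$ forces $|u|^{2\si}\in L^\infty$, and similarly $\|\,|u|^{2\si}\,y\,u\|_{L^2}$ needs $\|u\|_{L^\infty}$; but $H^1(\R^d)\not\hookrightarrow L^\infty(\R^d)$ for $d\ge2$, so the right-hand side of your closed differential inequality is not controlled by $\|u^\eps\|_\Sigma$ (no Moser-type bound of this form holds, as a scaling check confirms). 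This is precisely why the paper only presents the energy proof for $d=1$ and states that the case $d\ge2$ requires Strichartz estimates (dual Strichartz norms for the nonlinear term, as used later in Sections~\ref{sec:bounded}--\ref{sec:consistency}, where the $\mathcal H$-estimate moreover uses $u\in C(\R;H^2)$ and $\si>1/2$). To repair your argument for $d\ge2$ you would have to replace the $L^2$-in-time energy treatment of the nonlinearity by the scaled Strichartz machinery of Lemma~\ref{lem:scaledStrichartz}, or restrict the claim to $d=1$.
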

The proof is more complicated than in the linear case (see
\S\ref{sec:linearizable}). The solution of
\eqref{eq:NLS} is linearizable in the sense of \cite{PG96} (see also
\cite{CFG}), up to an Ehrenfest time. 
\smallbreak

In the critical case  $\alpha=\alpha_c$ with $\l\not=0$, the solution
of \eqref{eq:NLS} is no longer linearizable. Indeed, passing formally
to the limit $\eps\to 0$,
Equation~\eqref{eq:ueps} becomes
\begin{equation}\label{eq:u}
i\partial_t u+\frac{1}{2}\Delta u=\frac{1}{2}\langle
Q(t)y,y\rangle u+\l |u|^{2\sigma}u\quad ;\quad u(0,y)=a(y).
\end{equation}
\begin{remark}[Complete integrability]\label{rem:int}
   The cubic one-dimensional case $d=\si=1$ is
special: if $\dot Q=0$, 
then \eqref{eq:u} is completely integrable
(\cite{AblowitzClarkson}). However, if $\dot Q\not =0$, there exists
no Lax pair when the nonlinearity is autonomous as in
\eqref{eq:u}; see \cite{kha09,SHB07}. Note also that if $\dot Q=0$,
then $u^\eps=u$ for all time. 
\end{remark}
As in the linear case, we note that if $V$ is exactly a polynomial of
degree at most two, then $u$ is actually equal to $u^\eps$ for all
$\eps$. The global well-posedness for \eqref{eq:u} has been
established in \cite{Ca-p}. We first prove that $u$ yields a good
approximation for $u^\eps$ on bounded time intervals:
\begin{proposition}\label{prop:bounded}
  Let $d\ge 1$, $\si>0$ with $\si<2/(d-2)$ if $d\ge 3$, and $a\in
  \Sch(\R^d)$. Let $u\in C(\R;\Sigma)$ be the solution to
  \eqref{eq:u}, and let 
\begin{equation}\label{eq:phi}
\varphi^\eps(t,x)=\eps^{-d/4} u
\left(t,\frac{x-x(t)}{\sqrt\eps}\right)e^{i\left(S(t)+\xi(t)\cdot
    (x-x(t))\right)/\eps}.
\end{equation}
For all $T>0$ (independent of $\eps>0$), we have
\begin{equation*}
 \sup_{0\le t\le T} \lVert \psi^\eps(t) -
  \varphi^\eps(t)\rVert_{L^2(\R^d)}=\O\(\sqrt\eps\). 
\end{equation*}
If in addition $\si> 1/2$, 
\begin{equation*}
 \sup_{0\le t\le T} \lVert \psi^\eps(t) -
  \varphi^\eps(t)\rVert_{\mathcal H}=\O\(\sqrt\eps\). 
\end{equation*}
\end{proposition}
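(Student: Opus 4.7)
My plan is to work in the moving frame via the change of unknown \eqref{eq:chginc}. Since this transformation is an $L^2$-isometry, and it pushes the $\mathcal H$-norm at time $t$ to the fixed norm $\|\cdot\|_{L^2}+\|\nabla\cdot\|_{L^2}+\|y\cdot\|_{L^2}$ on $\R^d$, the task reduces to comparing $u^\eps$, solving \eqref{eq:ueps} with $\alpha=\alpha_c$, to $u$, solving \eqref{eq:u}. Setting $w^\eps:=u^\eps-u$, one has $w^\eps_{\mid t=0}=0$ and
\begin{equation*}
i\d_t w^\eps+\tfrac12\Delta w^\eps = V^\eps w^\eps+ \Bigl(V^\eps-\tfrac12\langle Q(t)y,y\rangle\Bigr)u + \lambda\Bigl(|u^\eps|^{2\sigma}u^\eps-|u|^{2\sigma}u\Bigr).
\end{equation*}

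For the first assertion I would perform the standard $L^2$ energy estimate: reality of $V^\eps$ kills the $V^\eps w^\eps$ term, and \eqref{eq:restepot} converts the source into a contribution $\sqrt\eps\int_0^t\||y|^3 u(\tau)\|_{L^2}\,d\tau$, finite on $[0,T]$ once one has propagated weighted regularity of $u$ along the flow \eqref{eq:u}; this follows from the well-posedness theory of \cite{Ca-p} applied to $a\in\Sch(\R^d)$. The nonlinear contribution is handled via the pointwise bound $\bigl||u^\eps|^{2\sigma}u^\eps-|u|^{2\sigma}u\bigr|\lesssim\bigl(|u^\eps|^{2\sigma}+|u|^{2\sigma}\bigr)|w^\eps|$, which yields $\|w^\eps(t)\|_{L^2}=\O(\sqrt\eps)$ uniformly on $[0,T]$ by Gronwall, \emph{provided} one controls $\|u^\eps\|_{L^\infty([0,T]\times\R^d)}$ uniformly in $\eps$.

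The uniform $L^\infty$ control is the main obstacle. I would propagate a higher Sobolev norm of $u^\eps$ embedding into $L^\infty$, say $H^s$ with $s>d/2$, through a bootstrap argument on $[0,T]$. Energy estimates on \eqref{eq:ueps} at the $H^s$ level bound $\|u^\eps(t)\|_{H^s}$ in terms of $\|u^\eps\|_{L^\infty}$ itself via Moser-type estimates for $z\mapsto|z|^{2\sigma}z$ (legitimate because $\sigma<2/(d-2)$); since $u$ is bounded in every Sobolev norm on $[0,T]$ by \cite{Ca-p} and $w^\eps$ vanishes at $t=0$, a standard continuity argument closes the loop for sufficiently small $\eps$. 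The nontrivial point is that the fractional exponent $2\sigma$ is not in general an integer, so the Moser bounds must be handled carefully; this is the technical heart of the proof.

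For the stronger $\mathcal H$ bound assuming $\sigma>1/2$, one would apply $\nabla$ and multiplication by $y$ to the error equation. The commutator $[-\tfrac12\Delta,y]=-\nabla$ is already under control, while $[V^\eps,\nabla]=-\nabla_y V^\eps$ is bounded by $\langle y\rangle$ uniformly in $\eps$ since $V$ is subquadratic, and is tamed after pairing with $u^\eps$ whose weighted $L^2$ norms are propagated in the bootstrap. The essential new ingredient is differentiating the nonlinear difference: the map $z\mapsto|z|^{2\sigma}z$ is $C^1$ exactly when $2\sigma\ge 1$, so under $\sigma>1/2$ one obtains a Moser-type estimate
\begin{equation*}
\bigl\|\nabla\bigl(|u^\eps|^{2\sigma}u^\eps-|u|^{2\sigma}u\bigr)\bigr\|_{L^2}\lesssim\bigl(\|u^\eps\|_{L^\infty}^{2\sigma}+\|u\|_{L^\infty}^{2\sigma}\bigr)\|\nabla w^\eps\|_{L^2}+(\text{lower order in }w^\eps),
\end{equation*}
and a parallel estimate after multiplication by $y$. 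Closing Gronwall in $\|w^\eps\|_{\mathcal H}$ delivers the $\O(\sqrt\eps)$ bound. The threshold $\sigma>1/2$ is exactly what is required to differentiate the nonlinearity once, which is why this second statement carries the extra restriction.
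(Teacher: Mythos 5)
Your reduction to the moving frame and the Gronwall scheme on $w^\eps=u^\eps-u$ are sound in outline, but the proof hinges entirely on the uniform-in-$\eps$ bound for $\|u^\eps\|_{L^\infty([0,T]\times\R^d)}$, and the way you propose to obtain it --- propagating $H^s$, $s>d/2$ (together with the weighted norms forced by $\nabla_y V^\eps=\O(\<y\>)$), via Moser-type estimates --- cannot work in the generality of the statement. The $L^2$ part of the proposition covers \emph{every} energy-subcritical $\si>0$ in every dimension $d\ge 1$, whereas for $2\si\notin 2\N$ the map $z\mapsto |z|^{2\si}z$ has only limited smoothness and $|u|^{2\si}u$ can be estimated in $H^s$ only for $s$ up to roughly $2\si+1$. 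Thus for $d=3$ and small $\si$ (say $\si\le 1/4$), and a fortiori for $d\ge 6$ where $\si<2/(d-2)$ forces $2\si+1\le 2\le d/2$, there is no admissible $s>d/2$ at which your bootstrap can close; for the same reason the claim that $u$ itself is ``bounded in every Sobolev norm'' by \cite{Ca-p} is not available for such $\si$. Even in the favorable cases, the fractional Moser/weighted estimates you defer are exactly the difficult step, and energy-subcriticality $\si<2/(d-2)$ does not supply them; subcriticality gives $\Sigma$-well-posedness, not uniform higher regularity.

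The paper's proof avoids any $L^\infty$ control of $u^\eps$. It works with $w^\eps=\psi^\eps-\varphi^\eps$ in the original variables, writes Duhamel's formula with the propagator $U^\eps$ of $-\frac{\eps^2}{2}\Delta+V$ (Fujiwara's parametrix for subquadratic $V$), and uses the scaled Strichartz estimates of Lemma~\ref{lem:scaledStrichartz}. The only a priori input on $u^\eps$ and $u$ is the uniform first-order bound $u^\eps,u\in C(\R;\Sigma)$ from \cite{Ca-p}; through the modified Gagliardo--Nirenberg inequality \eqref{eq:GN} this yields $L^r$ bounds of size $\eps^{-\delta(r)/2}$, the exponent identity \eqref{eq:1419} shows that the powers of $\eps$ cancel exactly at $\alpha=\alpha_c$, and the nonlinear term is absorbed on subintervals of length $\tau\ll 1$ thanks to the factor $\tau^{2\si/\theta}$; Gronwall with the $\O(\eps^{3/2})$ Taylor-remainder source $\mathcal L^\eps$ then gives $\O(\sqrt\eps)$ in $L^2$. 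The $\mathcal H$ estimate follows by applying $A^\eps,B^\eps$ and Lemma~\ref{lem:AB}, and, as you correctly sense, $\si>1/2$ enters through the pointwise bound $\bigl| |w^\eps+\varphi^\eps|^{2\si}-|\varphi^\eps|^{2\si}\bigr|\lesssim\bigl(|w^\eps|^{2\si-1}+|\varphi^\eps|^{2\si-1}\bigr)|w^\eps|$ and through $u\in C(\R;H^2)$ (note that $|z|^{2\si}z$ is actually $C^1$ for all $\si>0$; the relevant threshold is the Lipschitz estimate on $|z|^{2\si}$ and second-order differentiability). If you restricted to $\si\in\N$, or to $d\le 3$ with $\si$ large enough, your scheme could probably be completed, but as written it does not prove the proposition in its stated range.
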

\begin{remark}
  The presence of $u$, which solves a nonlinear equation, clearly
  shows that the nonlinearity modifies the coherent state at leading
  order. Note however that the 
Wigner measure of $\psi^\eps$ (see e.g. \cite{GMMP,LionsPaul}) is not
affected by the nonlinearity: 
\begin{align*}
  w(t,x,\xi) &= 
 \|u(t)\|_{L^2(\R^d)}^2 \delta\(x-x(t)\)\otimes \delta\(\xi-\xi(t)\) \\
&= \|a\|_{L^2(\R^d)}^2 
  \delta\(x-x(t)\)\otimes \delta\(\xi-\xi(t)\).
\end{align*}
The Wigner measure remains the same because the nonlinearity alters
only the envelope of the coherent state, not its center in phase space.
\end{remark}
\begin{remark}[Supercritical case] Consider the case
  $\alpha<\alpha_c$, and assume for instance $V=0$. Resuming the
  scaling \eqref{eq:chginc}, Equation~\eqref{eq:ueps} becomes
  \begin{equation*}
    i\d_t u^\eps +\frac{1}{2}\Delta u^\eps = \l
    \eps^{\alpha-\alpha_c}|u^\eps|^{2\si}u^\eps.
  \end{equation*}
  At time $t=0$, $u^\eps$ is independent of $\eps$: $u^\eps_{\mid
    t=0}=a$. Setting $\hbar^2 = \eps^{\alpha_c-\alpha}$ and changing
  the time variable to $s = t/\hbar $, the problem reads
  \begin{equation}\label{eq:semisurcrit}
    i\hbar \d_s u^\hbar +\frac{\hbar^2}{2}\Delta u^\hbar =
    |u^\hbar|^{2\si}u^\hbar \quad ;\quad u^\hbar (0,x)=a(x). 
  \end{equation}
Therefore, to understand the asymptotic behavior of $u$ as $\eps\to 0$
(or equivalently, as $\hbar\to 0$) for $t\in [0,T]$, we need to
understand the large 
time ($s\in [0,T/\hbar]$) behavior in \eqref{eq:semisurcrit}. This
corresponds to a large 
time semi-classical limit in the (supercritical) WKB
regime. Describing this behavior is
extremely delicate, and still an open problem; see  \cite{CaBook}.
\end{remark}
In order to prove the validity of the approximation on large time
intervals, we introduce the following notion:
\begin{definition}
 Let $u\in C(\R;\Sigma)$ be a solution to \eqref{eq:u}, and
 $k\in \N$. We say that $(Exp)_k$ is satisfied if
there exists $C=C(k)$ such that 
  \begin{equation*}
    \forall \alpha, \beta\in \N^d,\ |\alpha|+|\beta|\le k, \quad
    \left\lVert x^\alpha \d_x^\beta u(t)\right\rVert_{L^2(\R^d)}\lesssim
     e^{Ct}. 
  \end{equation*} 
\end{definition} 
Note that reasonably, to establish $(Exp)_k$, the larger the $k$, the
smoother the nonlinearity $z\mapsto |z|^{2\si}z$ has to be. For
simplicity, we shall now assume $\si\in\N$. 
\begin{proposition}[From \cite{Ca-p}]
  Let $d\le 3$, $\si\in \N$ with $\si=1$ if $d=3$, $a\in \Sch(\R^d)$
  and $k\in \N$. Then $(Exp)_k$ is satisfied (at 
  least) in the following cases:
  \begin{itemize}
  \item $\si=d=1$ and $\l\in \R$ (cubic one-dimensional case).
  \item $\si\ge 2/d$, $\l>0$ and $Q(t)$ is diagonal with  eigenvalues
    $\om_j(t)\le 0$. 
\item $\si\ge 2/d$, $\l>0$ and $Q(t)$ is compactly supported.
  \end{itemize}
\end{proposition}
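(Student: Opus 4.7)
The plan is to prove $(Exp)_k$ by induction on $k$, starting from $L^2$-control and progressively propagating moments and derivatives. The base case $k=0$ is mass conservation $\|u(t)\|_{L^2}=\|a\|_{L^2}$, a consequence of the fact that $\tfrac{1}{2}\<Q(t)y,y\>$ is real-valued and the nonlinearity is gauge invariant. To reach $k=1$, I would use the time-dependent energy
\[
E(t) = \tfrac{1}{2}\|\nabla u(t)\|_{L^2}^2 + \tfrac{1}{2}\int_{\R^d} \<Q(t)y,y\>|u(t,y)|^2\,dy + \tfrac{\l}{\si+1}\|u(t)\|_{L^{2\si+2}}^{2\si+2},
\]
whose time derivative reduces to $\tfrac{1}{2}\int \<\dot Q(t)y,y\>|u|^2\,dy$. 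The three cases diverge here: in the cubic one-dimensional case the problem is $L^2$-subcritical, so Gagliardo--Nirenberg absorbs the nonlinear contribution independently of the sign of $\l$; in the defocusing supercritical cases with $\l>0$ the nonlinear part of $E$ is nonnegative, and either the sign condition $\om_j(t)\le 0$ or the compact support of $Q(t)$ allows one to couple $E(t)$ and $\|yu(t)\|_{L^2}^2$ into a Gr\"onwall loop. In every case one recovers exponential-in-$t$ bounds on $\|\nabla u(t)\|_{L^2}$ and $\|yu(t)\|_{L^2}$.

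For the inductive step, assume $(Exp)_{k-1}$, fix $\alpha,\beta\in\N^d$ with $|\alpha|+|\beta|=k$, and set $w:=y^\alpha \d_y^\beta u$. Using $[i\d_t+\tfrac{1}{2}\Delta, \d_{y_j}]=0$ and $[i\d_t+\tfrac{1}{2}\Delta, y_j]=i\d_{y_j}$, together with Leibniz expansion against the quadratic potential, one derives an equation for $w$ of the form
\[
i\d_t w +\tfrac{1}{2}\Delta w = \tfrac{1}{2}\<Q(t)y,y\>w + \l\, y^\alpha \d_y^\beta\(|u|^{2\si}u\) + R_k,
\]
where $R_k$ gathers contributions $y^{\alpha'}\d_y^{\beta'}u$ with $|\alpha'|+|\beta'|\le k$, multiplied by coefficients arising from $Q(t)$ and its derivatives along $x(t)$. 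Summing the resulting $L^2$ energy estimates over all such $(\alpha,\beta)$ gives a coupled differential inequality on the sum $\sum_{|\alpha|+|\beta|\le k}\|y^{\alpha}\d_y^{\beta}u\|_{L^2}$, which I would close by invoking the induction hypothesis, the boundedness of $Q$ together with the exponential control of $\dot Q$ via \eqref{growthtraj}, and Gr\"onwall's lemma.

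The main obstacle is the nonlinear source $y^\alpha\d_y^\beta(|u|^{2\si}u)$. Since $\si\in\N$, the nonlinearity is polynomial in $u$ and $\bar u$, so Leibniz expansion produces a finite sum of products of factors $\d_y^{\beta_j}u$ (or conjugates) multiplied by the total moment $y^\alpha$, which can then be redistributed onto any one factor by H\"older. The top-order contribution is bounded by $\|u(t)\|_{L^\infty}^{2\si}\|w\|_{L^2}$, while each lower-order term is controlled via Sobolev embedding and $(Exp)_{k-1}$. This is where the restriction $d\le 3$ (with $\si=1$ if $d=3$) is crucial: it guarantees $H^s\hookrightarrow L^\infty$ for an $s$ already within the induction range, so that $\|u(t)\|_{L^\infty}$ inherits exponential growth from a previous step. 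The three case hypotheses enter only to prevent a finite-time blow-up of $\|u(t)\|_\Sigma$ at the base step: focusing blow-up is excluded by $L^2$-subcriticality in the cubic 1D case, and by the energy method tailored to the sign or support of $Q$ in the two defocusing supercritical cases.
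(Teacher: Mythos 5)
First, a point of order: this proposition is not proved in the paper at all --- it is quoted verbatim from \cite{Ca-p} --- so there is no internal proof to compare yours against; your sketch has to stand on its own, and as written it does not deliver the statement. The issue is quantitative, and it is precisely the point of $(Exp)_k$: you must produce bounds of size $e^{Ct}$, since this single-exponential rate is what Theorems~\ref{theo:consistency} and \ref{theo:cubic} consume to reach $\log\log\frac1\eps$ and $\log\frac1\eps$ times. Your Gr\"onwall loops have exponentially growing coefficients and therefore only yield $\exp(\exp(Ct))$. Concretely, at the step $k=1$ you differentiate the energy containing the potential term and get $\dot E=\frac12\int\<\dot Q(t)y,y\>|u|^2$; but $\dot Q(t)=D^3V\(x(t)\)\dot x(t)$ and, by Lemma~\ref{lem:traj} (and its sharpness, $V=-|x|^2$), $\dot x(t)=\xi(t)$ may grow like $e^{C_0t}$, so the coefficient in your coupled inequality for $(E,\|yu\|_{L^2}^2)$ is $e^{Ct}$, not a constant. (This particular step can be repaired: drop the potential term and use $\frac{d}{dt}\(\frac12\|\nabla u\|^2+\frac{\l}{\si+1}\|u\|_{2\si+2}^{2\si+2}\)=-\int\<Q(t)y,\IM(\bar u\nabla u)\>$, which only needs $Q$ bounded.) The same defect reappears, irreparably within your scheme, at every inductive step: your top-order nonlinear coefficient is $\|u(t)\|_{L^\infty}^{2\si}$, which itself grows exponentially, so the order-$k$ Gr\"onwall again gives a double exponential. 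To get a true $e^{Ct}$ one needs the effective coefficient per unit time interval to be bounded, which is exactly what the proofs in \cite{Ca-p} arrange: in the cubic one-dimensional case through the uniform local Strichartz bound coming from $L^2$-subcriticality (the analogue of Lemma~\ref{lem:unifcubic} here), with an induction on $k$ via Duhamel on intervals $[t,t+1]$; and in the other two cases through the specific structure of $Q$ (for compactly supported $Q(t)$, the equation is the autonomous defocusing NLS for large times, so energy conservation even gives bounded Sobolev norms and algebraically growing momenta, as remarked after the proposition).

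A second, conceptual gap: you assert that the three case hypotheses ``enter only to prevent a finite-time blow-up of $\|u(t)\|_\Sigma$ at the base step''. That is not their role: global well-posedness in $\Sigma$ for \eqref{eq:u} holds whenever $\si<2/d$ or $\l\ge0$, with no condition on the sign or support of $Q$ (this is recalled in the introduction). The hypotheses are what makes the \emph{exponential rate} provable at higher order, and your sketch never genuinely uses them (nor does it use that ``compactly supported'' refers to the support of $t\mapsto Q(t)$, not a spatial support). Two smaller points: for $d=2,3$ the step $k=2$ needs $\|u(t)\|_{L^\infty}$, which is not controlled by $(Exp)_1$, so your induction chain has a hole before $H^2$ is available (circumventing it by interpolation against the $H^2$ norm being estimated produces a superlinear Gr\"onwall inequality, which is worse still); and the commutator of $y^\alpha\d_y^\beta$ with $\frac12\<Q(t)y,y\>$ produces terms of the \emph{same} total order $k$ (with bounded coefficients from $Q$ only --- no time derivatives of $Q$ appear there), so they must be kept inside the order-$k$ Gr\"onwall vector rather than relegated to a lower-order remainder $R_k$.
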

It is very likely that this result remains valid under more general
assumptions (see in particular \cite[\S6.2]{Ca-p} for the case
$\si=2/d$). Yet, we have not been able to prove it. Let us comment a
bit on these three cases. The first case is the most general one
concerning the potential $V$ and the classical trajectory $x(t)$: the
only assumption carries over the nonlinearity (the important aspect is
that it is $L^2$-subcritical). The other two cases concern $L^2$-critical
or supercritical defocusing nonlinearities. In the second case, $V$ is
required to be concave (along the classical trajectory), and the last
case corresponds for instance to a compactly supported ${\rm Hess V}$,
when the classical trajectory is not trapped. In
this last case, we have actually better than an exponential decay:
Sobolev norms are bounded, and momenta grow algebraically in
time. The following result could be improved in this case. 

\begin{theorem}\label{theo:consistency}
Let $a\in \Sch(\R^d)$. If $(Exp)_4$ is satisfied, then
there exist  $C,C_2>0$ independent of $\eps$, and $\eps_0>0$ such that
for all $\eps\in ]0,\eps_0]$,
\begin{equation*}
  \lVert \psi^\eps(t) -
  \varphi^\eps(t)\rVert_{\mathcal H}\lesssim\sqrt \eps \exp\(\exp(C_2t)\),\quad
  0\le t\le C \log\log\frac{1}{\eps}.
\end{equation*}
In particular, there exists $c>0$ independent of $\eps$ such that 
\begin{equation*}
  \sup_{0\le t\le c\log\log\frac{1}{\eps}}\lVert \psi^\eps(t) -
  \varphi^\eps(t)\rVert_{\mathcal H} \Tend \eps 0 0 .  
\end{equation*}
\end{theorem}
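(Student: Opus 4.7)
The plan is to compare $u^\eps$ and $u$ in the moving frame, close an energy estimate in $\Sigma$, and integrate on a very long time window. Since the wave packet change of unknown \eqref{eq:chginc} is $L^2$-isometric, and a direct computation shows $A^\eps(t)$ acts on $\psi^\eps$ as $\nabla_y$ and $B^\eps(t)$ as multiplication by $y$ in the moving frame, the theorem reduces to bounding $W^\eps(t):=\lVert w^\eps\rVert_{L^2}+\lVert \nabla_y w^\eps\rVert_{L^2}+\lVert y w^\eps\rVert_{L^2}$, where $w^\eps=u^\eps-u$. Subtracting \eqref{eq:ueps} (with $\alpha=\alpha_c$) from \eqref{eq:u} gives
\begin{equation*}
i\d_t w^\eps+\tfrac12\Delta w^\eps = V^\eps w^\eps+R^\eps+\l\bigl(|u^\eps|^{2\si}u^\eps-|u|^{2\si}u\bigr),\qquad w^\eps_{\mid t=0}=0,
\end{equation*}
with source $R^\eps=(V^\eps-\tfrac12\<Q(t)y,y\>)u$.

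First I would bound $R^\eps$ in $\Sigma$. Taylor expansion of $V$ at $x(t)$ one order beyond \eqref{eq:restepot}, combined with Assumption~\ref{hyp:V} (all derivatives of order $\ge 2$ being globally bounded), yields the pointwise estimates $|\d^\gamma(V^\eps-\tfrac12\<Qy,y\>)|\lesssim \sqrt\eps\,|y|^{3-|\gamma|}$ for $|\gamma|\le 3$. Pairing with $(Exp)_4$ gives
\begin{equation*}
\lVert R^\eps\rVert_{L^2}+\lVert \nabla R^\eps\rVert_{L^2}+\lVert y R^\eps\rVert_{L^2}\lesssim \sqrt\eps\,e^{C_0 t}.
\end{equation*}

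Next I would derive energy estimates for $w^\eps$, $\nabla w^\eps$ and $y w^\eps$ in $L^2$. The real-valued $V^\eps$ drops out against $w^\eps$; commuting with $\nabla$ produces $\nabla V^\eps\cdot w^\eps$, controlled by $|\nabla V^\eps|\lesssim |y|$ (a direct consequence of Assumption~\ref{hyp:V}) hence by $\lVert y w^\eps\rVert_{L^2}$; and $[-\tfrac12\Delta,y]w^\eps=-\nabla w^\eps$ which feeds back into $W^\eps$. For the nonlinear difference, $\si\in\N$ (required for $(Exp)_4$) makes $z\mapsto |z|^{2\si}z$ polynomial, so
\begin{equation*}
|u^\eps|^{2\si}u^\eps-|u|^{2\si}u = \sum_j P_j(u,\bar u,w^\eps,\bar w^\eps),
\end{equation*}
each $P_j$ a monomial of total degree $2\si+1$ with at least one $w^\eps$ factor. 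Using $H^2\hookrightarrow L^\infty$ for $d\le 3$ together with $(Exp)_4$ gives $\lVert u\rVert_{W^{1,\infty}}\lesssim e^{C_3 t}$; an analogous bound for $u^\eps$ (obtained by re-running the proof of $(Exp)_k$ from \cite{Ca-p} for the $\eps$-uniformly subquadratic potential $V^\eps$) plugs into the resulting estimate and yields, as long as these quantities remain bounded,
\begin{equation*}
\frac{dW^\eps}{dt}\le C_1\sqrt\eps\,e^{C_0 t}+C_2\,e^{C_2 t}\,W^\eps.
\end{equation*}
The integral form of Gronwall's inequality with the time-dependent rate $C_2 e^{C_2 t}$ then gives
\begin{equation*}
W^\eps(t)\lesssim \sqrt\eps\int_0^t e^{C_0 s}\exp\Bigl(e^{C_2 t}-e^{C_2 s}\Bigr)ds\lesssim \sqrt\eps\,\exp\bigl(\exp(C_2 t)\bigr),
\end{equation*}
which is the stated bound and is $o(1)$ as soon as $\exp(C_2 t)\le \tfrac12|\log\eps|$, i.e.\ on $[0,c\log\log(1/\eps)]$ for any $c<1/C_2$.

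The main obstacle is closing the bootstrap. Derivatives of the polynomial nonlinearity produce terms in which $\nabla w^\eps$ multiplies $L^\infty$-norms of $u$ or $w^\eps$, and $\Sigma$ does not embed in $L^\infty$ when $d\ge 2$; one therefore has to propagate in parallel a slightly higher-regularity norm of $w^\eps$, for instance $\lVert \d^\gamma w^\eps\rVert_{L^2}$ and $\lVert y^\gamma w^\eps\rVert_{L^2}$ for $|\gamma|\le 2$, together with the analogue of $(Exp)_k$ for $u^\eps$ with $\eps$-independent constants (using the uniform subquadraticity of $V^\eps$). Once this is orchestrated, the double exponential is intrinsic: integrating a Lipschitz rate that already grows like $e^{C_2 t}$ produces $\exp(e^{C_2 t})$, which forces the $\log\log$ Ehrenfest-type time scale.
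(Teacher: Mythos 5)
There is a genuine gap, and it is exactly at the point your argument leans on hardest: exponential-in-time $L^\infty$-type control of $u^\eps$ (equivalently of $w^\eps=u^\eps-u$). The hypothesis of the theorem is $(Exp)_4$ \emph{for the profile $u$ only}; nothing is assumed about $u^\eps$, and your Gronwall rate $C_2e^{C_2t}$ requires $\lVert u^\eps\rVert_{W^{1,\infty}}\lesssim e^{Ct}$ (and in fact $L^\infty$ control of $w^\eps$ itself, since the polynomial difference contains pure powers of $w^\eps$). Your proposed fix --- ``re-running the proof of $(Exp)_k$ from \cite{Ca-p} for the $\eps$-uniformly subquadratic potential $V^\eps$'' --- does not work: the cases in which $(Exp)_k$ is proved rely either on $L^2$-subcriticality ($\si=d=1$) or on structural conditions on $Q(t)={\rm Hess}\,V(x(t))$ (nonpositive eigenvalues, or compact support along the trajectory), and these conditions on the Hessian evaluated \emph{at} $x(t)$ do not transfer to the full remainder potential $V^\eps(t,y)$, whose Hessian in $y$ is ${\rm Hess}\,V(x(t)+\sqrt\eps y)$. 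If uniform exponential bounds on $u^\eps$ were available, most of the difficulty of the theorem would disappear; their absence is precisely why the paper abandons pure energy estimates and instead runs semiclassical Strichartz estimates (Lemma~\ref{lem:scaledStrichartz}) with a bootstrap only on $\lVert w^\eps(t)\rVert_{L^{2\si+2}}\le \eps^{-\delta(r)/2}e^{\kappa t}$, absorbing the nonlinear term on time intervals of length $\tau\lesssim e^{-Ct}$; iterating that absorption is what produces the double exponential and the $\log\log(1/\eps)$ time, and the bootstrap is closed a posteriori by Gagliardo--Nirenberg through \eqref{eq:GN}, never using any a priori information on $u^\eps$.

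A secondary problem is that even granting the $u^\eps$ bounds, your energy scheme does not close in dimension $d\ge 2$, as you partly acknowledge: $\Sigma$ does not embed in $L^\infty$, and the repair you sketch (propagating $\lVert \d^\gamma w^\eps\rVert_{L^2}$, $\lVert y^\gamma w^\eps\rVert_{L^2}$ for $|\gamma|\le 2$) needs the corresponding source bounds, e.g. $\lVert\nabla^2\bigl((V^\eps-\tfrac12\<Qy,y\>)u\bigr)\rVert_{L^2}$ contains $\sqrt\eps\,\lVert |y|^3\nabla^2 u\rVert_{L^2}$, i.e. moments of order $|\alpha|+|\beta|=5$, which exceed the assumed $(Exp)_4$. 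So as written the scheme both assumes unavailable bounds on $u^\eps$ and consumes more regularity/decay of $u$ than the theorem grants. The correct route is the paper's: work with $\psi^\eps-\varphi^\eps$, estimate $\mathcal L^\eps$ and the nonlinear difference via scaled Strichartz norms $L^q L^r$ with $(q,r)$ admissible and $r=2\si+2$, use $(Exp)_3$--$(Exp)_4$ only through $\varphi^\eps$ (via \eqref{eq:phiexp}), and bootstrap \eqref{eq:solong2} up to $t\le c\log\log(1/\eps)$.
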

In the one-dimensional cubic case, this result can be improved on two
aspects. First, we can prove a long time asymptotics in $L^2$ provided
$(Exp)_3$ is satisfied. More important is the fact that we obtain an
asymptotics up to an Ehrenfest time:
\begin{theorem}\label{theo:cubic}
  Assume $d=\si=1$, and let $a\in \Sch(\R)$. If $(Exp)_3$ is satisfied, then
there exist  $C,C_3>0$ independent of $\eps$, and $\eps_0>0$ such that
for all  $\eps\in ]0,\eps_0]$,
\begin{equation*}
  \lVert \psi^\eps(t) -
  \varphi^\eps(t)\rVert_{L^2(\R)}\lesssim\sqrt \eps \exp(C_3t),\quad
  0\le t\le C \log\frac{1}{\eps}.
\end{equation*}
In particular, there exists $c>0$ independent of $\eps$ such that 
\begin{equation*}
  \sup_{0\le t\le c\log\frac{1}{\eps}}\lVert \psi^\eps(t) -
  \varphi^\eps(t)\rVert_{L^2(\R)} \Tend \eps 0 0 .  
\end{equation*}
If in addition $(Exp)_4$ is satisfied, then for the same constants as
above,
\begin{equation*}
  \lVert \psi^\eps(t) -
  \varphi^\eps(t)\rVert_{\mathcal H}\lesssim\sqrt \eps \exp(C_3t),\quad
  0\le t\le C \log\frac{1}{\eps},
\end{equation*}
and
\begin{equation*}
  \sup_{0\le t\le c\log\frac{1}{\eps}}\lVert \psi^\eps(t) -
  \varphi^\eps(t)\rVert_{\mathcal H} \Tend \eps 0 0 .  
\end{equation*}
\end{theorem}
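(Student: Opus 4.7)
The plan is to follow the strategy of Theorem~\ref{theo:consistency}, but exploit the one-dimensional cubic structure to avoid the double-exponential compounding that restricted the latter to $\log\log(1/\eps)$ and reach the Ehrenfest time $\log(1/\eps)$ instead. After the change of variables~\eqref{eq:chginc} I would compare $u^\eps$ (solving \eqref{eq:ueps} with $\alpha=\alpha_c$ and $d=\si=1$) to $u$ (solving \eqref{eq:u}). Setting $w^\eps := u^\eps - u$, with $w^\eps(0)=0$, it satisfies
\begin{equation*}
 i\d_t w^\eps + \tfrac{1}{2}\d_y^2 w^\eps = V^\eps w^\eps + S^\eps + \l\, N^\eps,
\end{equation*}
with $S^\eps := \bigl(V^\eps - \tfrac{1}{2}Q(t)y^2\bigr)u$ obeying $|S^\eps|\le \underline C\sqrt\eps\,|y|^3|u|$ by~\eqref{eq:restepot}, and $N^\eps := |u^\eps|^2u^\eps-|u|^2 u$.

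For the $L^2$ assertion I would run the standard energy identity for $\|w^\eps\|_{L^2}^2$. The real potential $V^\eps w^\eps$ drops out, while $(Exp)_3$ yields $\|S^\eps\|_{L^2}\lesssim \sqrt\eps\, e^{Ct}$. Decomposing $N^\eps = 2|u|^2 w^\eps + u^2\overline{w^\eps} + R(u,w^\eps)$ with $|R|\lesssim(|u|+|w^\eps|)|w^\eps|^2$ and taking imaginary parts, the leading contribution is controlled by $C\|u\|_{L^\infty}^2 \|w^\eps\|_{L^2}^2$. The crucial one-dimensional input is the Gagliardo--Nirenberg inequality $\|u\|_{L^\infty}^2\lesssim \|u\|_{L^2}\|\d_y u\|_{L^2}$, combined with mass conservation $\|u(t)\|_{L^2} = \|a\|_{L^2}$ and $(Exp)_1\subset(Exp)_3$, which gives $\|\d_y u(t)\|_{L^2}\lesssim e^{Ct}$. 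A bootstrap on $\|w^\eps\|_{L^\infty}$ handles the cubic remainder $R$, and closing the Gronwall argument while exploiting the $L^2$-subcritical character of the cubic nonlinearity in dimension one yields $\|w^\eps(t)\|_{L^2}\lesssim\sqrt\eps\exp(C_3 t)$ on $0\le t\le C\log(1/\eps)$.

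For the $\mathcal H$ statement under $(Exp)_4$ I would commute $A^\eps(t)$ and $B^\eps(t)$ through the equation: by subquadraticity of $V$, the commutators $[A^\eps,V^\eps]$ and $[B^\eps,V^\eps]$ act by bounded multiplications, the cubic nonlinearity enjoys clean Leibniz identities such as $\d_y(|u|^2u)=2|u|^2\d_y u+u^2\overline{\d_y u}$, and the new source terms $A^\eps S^\eps,\,B^\eps S^\eps$ involve norms $\|y^\alpha\d_y^\beta u\|_{L^2}$ with $|\alpha|+|\beta|\le 4$, all bounded by $e^{Ct}$ via $(Exp)_4$. A coupled Gronwall estimate on the triple $(\|w^\eps\|_{L^2},\|A^\eps w^\eps\|_{L^2},\|B^\eps w^\eps\|_{L^2})$ then propagates the single-exponential growth to the full $\mathcal H$ norm. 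The main obstacle is precisely reducing the time amplification from double to single exponential: a crude Gronwall with coefficient $\|u\|_{L^\infty}^2 \lesssim e^{Ct}$ produces $\exp(\exp(Ct))$ and only reaches $\log\log(1/\eps)$, so achieving $\log(1/\eps)$ requires a sharper use of the cubic $L^2$-subcritical structure to keep the amplification linear in $t$ in the exponent, together with a verification that the bootstrap on $\|w^\eps\|_{L^\infty}$ holds throughout the Ehrenfest interval.
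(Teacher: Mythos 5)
Your outline stalls exactly at the point that constitutes the actual content of the theorem. The $L^2$ energy identity you propose has Gronwall coefficient $\|u(t)\|_{L^\infty(\R)}^2$, and the only bound you extract for it ($\|u\|_{L^\infty}^2\lesssim\|u\|_{L^2}\|\d_y u\|_{L^2}\lesssim e^{Ct}$ via $(Exp)_1$) is exponentially growing in time; Gronwall then yields $\exp\bigl(\int_0^t e^{Cs}ds\bigr)\sim\exp(e^{Ct})$, i.e.\ precisely the double-exponential loss of Theorem~\ref{theo:consistency}, which only reaches $\log\log\frac1\eps$. You acknowledge this in your last sentence and defer the fix to ``a sharper use of the cubic $L^2$-subcritical structure,'' but that sharper use is the proof, and it is missing. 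The paper's mechanism is not an energy estimate with an $L^\infty$ coefficient at all: it is Lemma~\ref{lem:unifcubic}, the uniform-in-time local Strichartz bound $\|u\|_{L^8([t,t+1];L^4(\R))}\le C\|a\|_{L^2}$, which uses \emph{only} mass conservation \eqref{eq:L2}, local-in-time Strichartz estimates for the propagator with potential $\frac12 V''(x(t))x^2$, and the bootstrap Lemma~\ref{lem:boot} --- crucially, no exponentially growing $H^1$ norm enters. Feeding this into the scaled Strichartz estimates of Lemma~\ref{lem:scaledStrichartz} on intervals $I=[t,t+\tau]$, the absorption constant $\tau^{1/4}\bigl(\|w^\eps\|_{L^8(I;L^4)}^2+\|\varphi^\eps\|_{L^8(I;L^4)}^2\bigr)\eps^{1/4}$ is bounded uniformly in $t$ (under the bootstrap hypothesis \eqref{eq:solong3}), so $\tau$ can be chosen independent of $t$; the covering of $[0,t]$ then produces only a single exponential, and Gronwall with the $(Exp)_3$ control of $\mathcal L^\eps$ gives $\|w^\eps\|_{L^\infty([0,t];L^2)}\lesssim\sqrt\eps e^{Ct}$ up to $t\le C\log\frac1\eps$.

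A second, related defect: your bootstrap quantity is $\|w^\eps\|_{L^\infty}$, which in one dimension requires control of $\d_y w^\eps$ (equivalently $A^\eps w^\eps$) via Gagliardo--Nirenberg, hence $(Exp)_4$-type information; but the first assertion of the theorem is supposed to hold under $(Exp)_3$ alone. The paper bootstraps instead the Strichartz norm $\|w^\eps\|_{L^8([t,t+1];L^4)}\le\eps^{-1/8}\|a\|_{L^2}$, which is recovered from the $L^2$ bound through the Strichartz inequality itself ($\|w^\eps\|_{L^8(I;L^4)}\lesssim\eps^{1/4}e^{Ct}$), so no derivative of $w^\eps$ is ever needed for the $L^2$ statement; $(Exp)_4$ and the operators $A^\eps,B^\eps$ enter only in the second part, for the $\mathcal H$ estimate. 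So the architecture you need is Strichartz-based throughout, with the $L^2$-subcriticality exploited through the exponent gap $\theta=8/3<q=8$ and the uniform bound of Lemma~\ref{lem:unifcubic}, not through a pointwise $L^\infty$ bound on $u$.
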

The technical reason which explains the differences between
Theorem~\ref{theo:consistency} and Theorem~\ref{theo:cubic} is that
the one-dimensional cubic case is $L^2$-subcritical. This aspect has
several consequences regarding the Strichartz estimates we use in the
course of the proof.
\smallbreak

These nonlinear results are to be compared with previous ones
concerning the interaction between a linear dynamics (classical
trajectories) and nonlinear effects. Consider the WKB regime
\begin{equation}\label{eq:BKW2}
  i\eps\d_t \psi^\eps+\frac{\eps^2}{2}\Delta \psi^\eps = V(x)\psi^\eps +
  \l |\psi^\eps|^{2\si} \psi^\eps\quad ;\quad \psi^\eps(0,x)= \eps^{\widetilde
    \beta} a(x) e^{ix\cdot \xi_0/\eps}, 
\end{equation}
with $V$ satisfying Assumption~\ref{hyp:V}. Like above, it is
equivalent, up to a rescaling, to
\begin{equation*}
  i\eps\d_t \psi^\eps+\frac{\eps^2}{2}\Delta \psi^\eps = V(x)\psi^\eps +
  \l \eps^{\widetilde \alpha}|\psi^\eps|^{2\si} \psi^\eps\quad ;\quad
  \psi^\eps(0,x)=  a(x) e^{ix\cdot \xi_0/\eps},
\end{equation*}
with $\widetilde \alpha =2\si\widetilde \beta$. The critical value in
this regime is $\widetilde 
\alpha_c=1$ (see \cite{CaBook}). In \eqref{eq:BKW2}, this corresponds
to initial data of 
order $\eps^{1/(2\si)}$ in $L^\infty$, like in the present case of
wave packets. However, the critical nonlinear effects are very
different in the case of \eqref{eq:BKW2}. The following asymptotics
holds in $L^2(\R^d)$ (see \cite{CaBook}): 
\begin{equation*}
  \psi^\eps(t,x) \Eq \eps 0 a(t,x)e^{ig(t,x)}e^{i\phi(t,x)/\eps},
\end{equation*}
as long as the phase $\phi$, solution to the Hamilton--Jacobi equation
\begin{equation*}
  \d_t \phi+\frac{1}{2}|\nabla \phi|^2 +V=0\quad ;\quad
  \phi(0,x)=x\cdot \xi_0,
\end{equation*}
remains smooth.  More general initial phases are actually allowed: we
consider an initial phase linear in $x$ for the comparison with
\eqref{eq:NLS}. The amplitude $a$ solves a \emph{linear} transport
equation: at leading order, nonlinear effects show up through the
phase modulation $g$ (which depends on $\l$ and $\si$). This result
calls for at least two comments. First, this nonlinear effect is rather
weak: for instance, it does not affect the main quadratic observables
at leading order,
$|\psi^\eps|^2$ (position density) and $\eps\IM \overline \psi^\eps
\nabla \psi^\eps$ (current density). In the case of \eqref{eq:NLS},
the profile equation is, in a sense, more nonlinear, even though in
both cases, Wigner measures are not affected by the critical
nonlinearity. Second, the validity 
of WKB analysis is limited in general, even if $V$ is a polynomial. If
$V=0$, $\phi(t,x) = x\cdot \xi_0 -t|\xi_0|^2/2$ is smooth for all
time, $a(t,x)=a_0(x-t\xi_0)$ remains bounded, and the asymptotics can
be justified up to Ehrenfest time, by simply resuming the proof given
in \cite{CaBook}. If $V(x)=E\cdot x$, Avron--Herbst formula shows
that this case is essentially the same as $V=0$. On the other hand, if
$V(x)=\om^2|x|^2/2$, classical trajectories in \eqref{eq:traj} are
explicit:
\begin{equation*}
  x(t)=x_0\cos(\om t)+\xi_0\frac{\sin(\om t)}{\om}. 
\end{equation*}
They all meet at $\xi_0/\om$ at time $t_*=\pi/(2\om)$: the phase $\phi$
becomes singular as $t\to t_*$, and WKB analysis ceases to be valid,
while the wave packets approach yields an exact result for all time in
such a case. 
\smallbreak

In
\cite{BJ00,FGJS04,GS07,HZ07,JFGS06,Keraani02,KeraaniAA}, the authors
have considered a similar problem, in a different regime though:
\begin{equation}\label{eq:WKB}
  i\eps\d_t \psi^\eps+\frac{\eps^2}{2}\Delta \psi^\eps = V(x)\psi^\eps -
  |\psi^\eps|^{2\si} \psi^\eps\quad ;\quad \psi^\eps(0,x)=Q
\(\frac{x-x_0}{\eps}\)e^{i\xi_0\cdot x/\eps}, 
\end{equation}
where $Q$ is a ground state, solution to a nonlinear elliptic
equation. 
They prove, with some precision depending on the papers:
\begin{equation*}
  \psi^\eps(t,x) \Eq \eps 0 Q\(\frac{x-x(t)}{\eps}\)e^{i\xi(t)\cdot
    x/\eps + i\theta^\eps(t)},\quad \theta^\eps(t)\in \R. 
\end{equation*}
As pointed out in \cite{HZ07}, such results may be extended to an
Ehrenfest time. An important difference with our paper must be
emphasized, besides the scaling: the particular initial data
makes it possible to rely on rigidity properties of the solitary
waves, which do not hold for general profiles. In \cite{CM04},
some results concerning a defocusing equation with more
general initial profiles are proved (or cited), in the same scaling as in
\eqref{eq:WKB}: however, it seems that unless $V$ is a polynomial of
degree at most two, only partial results are available then (that is,
on relatively small time intervals). Finally, even when $\d^\gamma V=$ for all
$|\gamma|\ge 3$, the time intervals on which some asymptotic results
are proved must be independent of $\eps$.   
\smallbreak

\subsection{Nonlinear superposition}

We still suppose $\alpha=\alpha_c$. For simplicity, in this paragraph,
we assume that $\si$ is an integer: this is compatible with the fact
that the nonlinearity is energy-subcritical only if $d\le 3$. We
consider initial data corresponding to the superposition of two  wave
packets:
\begin{equation*}
 \psi^\eps(0,x)=\eps^{-d/4} a_1\left(\frac{x-x_1}{\sqrt\eps}\right)
e^{i{(x-x_1)\cdot\xi_1/\eps}}+\eps^{-d/4} a_2\left(\frac{x-x_2}{\sqrt\eps}\right)
e^{i{(x-x_2)\cdot\xi_2/\eps}}, 
\end{equation*}
with $a_1,a_2\in \Sch({\R})$,
$(x_1,\xi_1),(x_2,\xi_2)\in\R^{2}$, and $(x_1,\xi_1)\not
=(x_2,\xi_2)$. For $j\in\{1,2\}$,  $\left(x_j(t),\xi_j(t)\right)$ are the 
classical trajectories solutions to
\eqref{eq:traj} with initial data $(x_j,\xi_j)$. We denote by~$S_j$
the action associated with $(x_j(t),\xi_j(t))$ by~\eqref{eq:action} and
by $u_j$ the solution of~\eqref{eq:u} for the curve $x_j(t)$ and with
initial data $a_j$. We consider $ 
\varphi_j^\eps$  associated by~\eqref{eq:phi} with $u_j, x_j,\xi_j,
S_j $, and $\psi^\eps \in C(\R;\Sigma)$ solution to \eqref{eq:NLS} with
$\alpha=\alpha_c$ and the above initial data.
\smallbreak

The functional setting used to describe the function $\psi^\eps$ must
be changed in the case of two initial wave packets: recall that $\mathcal
H$ is defined through $A^\eps$ and $B^\eps$, which are related to the
Hamiltonian flow. The geometric meaning of $A^\eps$ and $B^\eps$
becomes irrelevant in the case of two wave packets. Instead, we use
norms on $\Sigma$ whose geometric meaning is weaker, since
essentially, they reflect the fact that we consider $\eps$-oscillatory
functions, which remain somehow localized in space (before Ehrenfest
time):
\begin{equation*}
  \lVert f\rVert_{\Sigma_\eps} = \lVert f\rVert_{L^2(\R^d)} + \lVert
  \eps \nabla f\rVert_{L^2(\R^d)} +\lVert x f\rVert_{L^2(\R^d)} .
\end{equation*}
For finite time, we have: 
\begin{proposition}\label{prop:superposition}
Let $d\le 3$,  $\si\in \N$ ($\si=1$ if $d=3$), and $a_1,a_2\in
\Sch(\R^d)$.  For
all $T>0$ (independent  of $\eps$), we have, for all 
$\gamma <1/2$:
\begin{equation*}
  \sup_{0\le t\le
    T}\|\psi^\eps(t)-\varphi_1^\eps(t)-\varphi_2^\eps(t)\|_{\Sigma_\eps}=
  \O\(\eps^{\gamma}\).  
\end{equation*}
\end{proposition}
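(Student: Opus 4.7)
The plan is to estimate $w^\eps := \psi^\eps - \varphi_1^\eps - \varphi_2^\eps$ directly. Arguing exactly as in the derivation preceding Proposition~\ref{prop:bounded}, each profile $\varphi_j^\eps$ satisfies the full equation \eqref{eq:NLS} only up to a source $r_j^\eps$ coming from the cubic remainder in the Taylor expansion of $V$ about $x_j(t)$. Under Assumption~\ref{hyp:V}, the wave-packet scaling and the Schwartz decay of $u_j$ give $\|r_j^\eps(t)\|_{L^2}\lesssim \eps^{3/2}$ on $[0,T]$. Subtracting the equations, $w^\eps$ (with zero initial data) solves a Schr\"odinger equation driven by $-(r_1^\eps+r_2^\eps)$ together with the nonlinear defect
\begin{equation*}
  \mathcal N^\eps = |\psi^\eps|^{2\si}\psi^\eps - \sum_{j=1}^2|\varphi_j^\eps|^{2\si}\varphi_j^\eps = \mathcal L^\eps(w^\eps) + I^\eps,
\end{equation*}
where $\mathcal L^\eps(w^\eps) := |\psi^\eps|^{2\si}\psi^\eps - |\varphi_1^\eps+\varphi_2^\eps|^{2\si}(\varphi_1^\eps+\varphi_2^\eps)$ is Lipschitz in $w^\eps$ and $I^\eps$ collects the cross terms. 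The Lipschitz part will be absorbed by a standard Gronwall argument, so the entire task reduces to bounding $I^\eps$.

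Since $\si\in\N$, $I^\eps$ is a finite linear combination of monomials $\varphi_1^{\eps\,a}\overline{\varphi_1^\eps}{}^{b}\varphi_2^{\eps\,c}\overline{\varphi_2^\eps}{}^{d}$ with $a+b+c+d=2\si+1$ and $\min(a+b,c+d)\ge 1$. Each such monomial is pointwise bounded by
\begin{equation*}
  \eps^{-d(\si+1/2)}\,F_1\!\left(\frac{x-x_1(t)}{\sqrt\eps}\right) F_2\!\left(\frac{x-x_2(t)}{\sqrt\eps}\right),\quad F_1,F_2\in\Sch(\R^d).
\end{equation*}
Fix $\delta\in(0,1/2)$ and split $[0,T]=\mathcal S_\eps\cup\mathcal C_\eps$ with $\mathcal S_\eps=\{t:|x_1(t)-x_2(t)|\ge\eps^{1/2-\delta}\}$. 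On $\mathcal S_\eps$, the Schwartz decay of $F_1,F_2$ combined with the triangle inequality yields $\|I^\eps(t)\|_{\Sigma_\eps}=\O(\eps^N)$ for every $N$. On $\mathcal C_\eps$, one exploits the fact that the hypothesis $(x_1,\xi_1)\ne(x_2,\xi_2)$ forces transverse crossings: whenever $x_1(t_0)=x_2(t_0)$, one must have $\xi_1(t_0)\ne\xi_2(t_0)$, since otherwise \eqref{eq:traj} would yield $(x_1,\xi_1)\equiv(x_2,\xi_2)$ by uniqueness. Thus $x_1-x_2$ has only isolated simple zeros on $[0,T]$, so $|\mathcal C_\eps|\lesssim\eps^{1/2-\delta}$. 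Combining the crude pointwise bound $\|I^\eps(t)\|_{\Sigma_\eps}\lesssim\eps^{-d\si/2}$ (where, by Lemma~\ref{lem:traj}, applying $x$ or $\eps\nabla$ to a wave packet costs at most $\O(1)$ on $[0,T]$) with the prefactor $\eps^{\alpha_c-1}=\eps^{d\si/2}$ from the Duhamel formula, one obtains
\begin{equation*}
  \eps^{\alpha_c-1}\|I^\eps\|_{L^1([0,T];\Sigma_\eps)}\lesssim \eps^{d\si/2}\cdot\eps^{1/2-\delta}\cdot\eps^{-d\si/2} = \eps^{1/2-\delta}.
\end{equation*}

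Inserting this into the Duhamel formula for $w^\eps$ and invoking semiclassical Strichartz estimates together with a Gronwall argument on the Lipschitz part $\mathcal L^\eps(w^\eps)$ (using that $\varphi_j^\eps$ is uniformly bounded in the relevant Strichartz norms on $[0,T]$, via Proposition~\ref{prop:bounded} and \eqref{eq:u}), the contribution of $I^\eps$ is $\O(\eps^{1/2-\delta})$ and that of $r_1^\eps+r_2^\eps$ is $\O(\eps^{1/2})$. To lift the resulting $L^2$ control to the full $\Sigma_\eps$ norm, apply $\eps\nabla$ and the multiplication by $x$ to the equation: the commutators $[\eps\nabla,V]=\eps\nabla V$ and $[x,\frac{\eps^2}{2}\Delta]=-\eps^2\nabla$ couple $\eps\nabla w^\eps$ and $xw^\eps$ linearly, and Assumption~\ref{hyp:V} (linear growth of $\nabla V$) closes the resulting system of Gronwall inequalities. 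Since $\delta>0$ is arbitrary, this yields $\|w^\eps\|_{\Sigma_\eps}=\O(\eps^\gamma)$ for every $\gamma<1/2$.

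The main obstacle is the near-crossing analysis: where $x_1(t)$ and $x_2(t)$ almost coincide, the cross-monomials in $I^\eps$ cease to be small, and both the duration of the crossing window and the worst-case pointwise size must be tracked carefully against the semiclassical scaling. This is precisely where the arbitrary small loss $\delta>0$ enters, explaining why the conclusion is $\gamma<1/2$ rather than $\gamma=1/2$.
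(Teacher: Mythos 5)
Your handling of the interaction term is essentially the paper's own argument: the same splitting of the nonlinear defect into a semilinear part and a cross term, the same dichotomy between times where $\lvert x_1(t)-x_2(t)\rvert\ge\eps^{1/2-\delta}$ (where the decay of the envelopes makes the cross term negligible; the paper encodes your ``Schwartz domination'' through the finite moments $M_{k+2}(T)$ with $k$ large, which is the correct way to make it uniform in $t\in[0,T]$) and the near-crossing set, whose measure is $\O(\eps^{1/2-\delta})$ by transversality of the classical trajectories, and the same scaling bookkeeping $\eps^{\alpha_c-1}\cdot\eps^{-d\si/2}\cdot\lvert\mathcal C_\eps\rvert$.

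There is, however, a genuine gap in the way you dispose of the semilinear part. You assert that $\mathcal L^\eps(w^\eps)$ is ``Lipschitz in $w^\eps$'' and will be ``absorbed by a standard Gronwall argument'', the only quantitative input being Strichartz bounds on $\varphi_1^\eps,\varphi_2^\eps$. But pointwise $\lvert\mathcal L^\eps(w^\eps)\rvert\lesssim\(\lvert w^\eps\rvert^{2\si}+\lvert\varphi_1^\eps+\varphi_2^\eps\rvert^{2\si}\)\lvert w^\eps\rvert$, and the factor $\lvert w^\eps\rvert^{2\si}$ is controlled by nothing you have at hand: Gronwall does not absorb a superlinear term. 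In the single-packet case (Proposition~\ref{prop:bounded}) this factor was harmless because $\psi^\eps$ itself obeys uniform bounds after the rescaling \eqref{eq:chginc}, giving \eqref{eq:1454} and \eqref{est:psiphiLr}; with two packets there is no such rescaling, hence no a priori $L^r$ bound on $\psi^\eps$ or $w^\eps$ — this is exactly the difficulty the paper singles out. The proof therefore requires a bootstrap (continuity) argument: assume $\|w^\eps(t)\|_{L^r}\le C(T)\eps^{-\delta(r)/2}$, i.e. the size of $\varphi_j^\eps$, run your estimates to get $\|w^\eps\|_{L^\infty([0,t];\Sigma_\eps)}\lesssim\eps^{\gamma}$, and close via the semiclassical Gagliardo--Nirenberg inequality, $\|w^\eps(t)\|_{L^r}\lesssim\eps^{-\delta(r)}\|w^\eps\|_{L^2}^{1-\delta(r)}\|\eps\nabla w^\eps\|_{L^2}^{\delta(r)}\lesssim\eps^{\gamma-\delta(r)}$, which improves on $\eps^{-\delta(r)/2}$ only when $\gamma>\delta(r)/2=\frac{d\si}{4\si+4}$. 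That threshold is below $1/2$ precisely because the nonlinearity is energy-subcritical, so the bootstrap closes for some admissible $\gamma<1/2$, and the statement for smaller $\gamma$ follows a fortiori. Without this step your Gronwall argument does not close, and the role of the subcriticality hypothesis in the proof is lost.
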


Besides, nonlinear superposition holds for large time (at least) in
the one-dimensional case, if
the points $\(x_1,\xi_1\)$ and $\(x_2,\xi_2\)$ have different energies. 
 \begin{theorem}\label{theo:superposition}
Assume that $d=1$, $\si$ is an integer, and let $a_1,a_2\in \Sch(\R)$.
Suppose that $E_1\not=E_2$, where 
\begin{equation*}
  E_j = \frac{\xi_j^2}{2}+V\(x_j\). 
\end{equation*}
Suppose that  $(Exp)_{k}$ is satisfied for some $k\ge 4$ (for $u_1$
and $u_2$).\\
$1.$ There exist $C,C_3>0$ independent of $\eps$, and $\eps_0>0$  such that for
all $\eps\in ]0,\eps_0]$,
\begin{equation*}
  \|\psi^\eps(t)-\varphi_1(t)^\eps-\varphi_2^\eps(t)\|_{\Sigma_\eps}
  \lesssim \eps^{\gamma}\exp\(\exp (C_3 t)\),\quad 0\le t\le
  C\log\log\frac{1}{\eps},
\end{equation*}
with $\gamma=\frac{k-2}{2k-2}$. In particular, there exists $c>0$
independent of $\eps$ such that 
\begin{equation*}
  \sup_{0\le t\le c\log\log\frac{1}{\eps}}
  \|\psi^\eps(t)-\varphi_1(t)^\eps-\varphi_2^\eps(t)\|_{\Sigma_\eps}
  \Tend \eps 0 0 . 
\end{equation*}
$2.$ Suppose in addition that $\si=1$.
There exist $C,C_4>0$ independent of $\eps$, and $\eps_0>0$  such that for
all $\eps\in ]0,\eps_0]$,
\begin{equation*}
  \|\psi^\eps(t)-\varphi_1(t)^\eps-\varphi_2^\eps(t)\|_{\Sigma_\eps}
  \lesssim \eps^{\gamma}e^{C_4 t},\quad 0\le t\le
  C\log\frac{1}{\eps},\quad \text{with }\gamma=\frac{k-2}{2k-2} . 
\end{equation*}
In particular, there exists $c>0$ independent of $\eps$ such that
\begin{equation*}
  \sup_{0\le t\le c\log\frac{1}{\eps}}
  \|\psi^\eps(t)-\varphi_1(t)^\eps-\varphi_2^\eps(t)\|_{\Sigma_\eps}
  \Tend \eps 0 0 . 
\end{equation*}
\end{theorem}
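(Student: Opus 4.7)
The plan is to introduce the error $w^\eps := \psi^\eps-\varphi_1^\eps-\varphi_2^\eps$, derive a semilinear Schr\"odinger equation satisfied by $w^\eps$ with a controlled source, and close a Gronwall argument in $\Sigma_\eps$ (respectively a Strichartz bootstrap in the cubic one-dimensional case). Since each $\varphi_j^\eps$ satisfies \eqref{eq:NLS} at $\alpha=\alpha_c$ up to the residual provided by Theorem~\ref{theo:consistency} (respectively Theorem~\ref{theo:cubic}), subtracting gives
\begin{equation*}
  i\eps\d_t w^\eps + \frac{\eps^2}{2}\Delta w^\eps = V w^\eps + \l\eps^{\alpha_c}\(\mathcal L^\eps w^\eps+\mathcal C^\eps\) + \eps r^\eps,
\end{equation*}
where $r^\eps$ collects the two individual residuals (small by $(Exp)_k$, $k\ge 4$), $\mathcal L^\eps w^\eps$ satisfies the pointwise bound $|\mathcal L^\eps w^\eps|\lesssim \(|\varphi_1^\eps|+|\varphi_2^\eps|+|w^\eps|\)^{2\si}|w^\eps|$ (to be absorbed by Gronwall or Strichartz), and the genuine interaction term is
\begin{equation*}
  \mathcal C^\eps := |\varphi_1^\eps+\varphi_2^\eps|^{2\si}\(\varphi_1^\eps+\varphi_2^\eps\) - |\varphi_1^\eps|^{2\si}\varphi_1^\eps - |\varphi_2^\eps|^{2\si}\varphi_2^\eps.
\end{equation*}

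The core of the argument is to show that $\mathcal C^\eps$ is small in an integrable-in-time norm. Because $\si\in \N$, $\mathcal C^\eps$ expands into a finite sum of monomials of the form $\varphi_1^{\eps\,a}\overline{\varphi_1^\eps}^{\,b}\varphi_2^{\eps\,\si+1-a}\overline{\varphi_2^\eps}^{\,\si-b}$ in which both wave packets genuinely appear. I would estimate each such monomial via a threshold argument: fix a scale $\eta^\eps(t)$ and split time according to whether $|x_1(t)-x_2(t)|$ exceeds $\sqrt\eps\,\eta^\eps(t)$. On the large-separation set, the Schwartz-type decay of $u_1,u_2$ encoded by $(Exp)_k$, combined with one-dimensional Sobolev embedding, makes each monomial of order $\eta^\eps(t)^{-k+1}$ times a fixed power of $\eps$ in $L^2_x$. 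On the complementary small-separation set, the hypothesis $E_1\ne E_2$, together with energy conservation and the subquadratic growth of $V$, furnishes a quantitative lower bound $|\xi_1(t)-\xi_2(t)|\gtrsim e^{-C_0 t}$, and the nonvanishing monomial phase-gradient $(a-b)\xi_1(t)+(1-a+b)\xi_2(t)$ permits integration by parts in $x$, gaining a factor $\sqrt\eps/|\xi_1-\xi_2|$ per step at the cost of one derivative of the localized profile. Optimizing $\eta^\eps(t)\simeq \eps^{-\delta}$ between these two regimes produces the announced exponent $\gamma=\tfrac{k-2}{2k-2}$, which tends to $1/2$ as $k\to\infty$.

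Once the cross-term bound is established, the Gronwall closure is relatively standard. For part~\textbf{1}, I apply an $\Sigma_\eps$-energy estimate to $w^\eps$, commuting $\eps\nabla$ and $x$ with the linear Schr\"odinger operator; the commutators produce lower-order terms and a multiplicative factor controlled in $L^\infty_x$ via $(Exp)_k$. Gronwall integration merges the cross contribution of size $\eps^\gamma$ with the residual $\sqrt\eps\exp(\exp(C_2 t))$ coming from Theorem~\ref{theo:consistency} into the announced double-exponential bound, valid on $[0,c\log\log(1/\eps)]$. For part~\textbf{2}, the $L^2$-subcriticality of the cubic one-dimensional nonlinearity allows me to replace the pure energy estimate by a Strichartz-based bootstrap on admissible pairs, as in the proof of Theorem~\ref{theo:cubic}; this reduces the Gronwall weight to a single exponential in $t$ and pushes the validity to $[0,c\log(1/\eps)]$.

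The main obstacle I anticipate is the uniform control of $\mathcal C^\eps$ over the full logarithmic time window. The lower bound $|\xi_1-\xi_2|\gtrsim e^{-C_0 t}$ inherited from $E_1\ne E_2$ degrades exponentially via Lemma~\ref{lem:traj}, and the Schwartz-decay constant coming from $(Exp)_k$ degrades similarly, so both sources of smallness compete with exponential growth and must be balanced carefully against the available positive power of $\eps$. The geometric content of the assumption $E_1\ne E_2$ is precisely that whenever the two classical orbits meet in position space energy conservation forces their momenta to differ, converting a potential resonance into a non-stationary-phase cancellation.
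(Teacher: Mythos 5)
Your overall architecture (error equation for $w^\eps=\psi^\eps-\varphi_1^\eps-\varphi_2^\eps$, splitting the nonlinearity into a semilinear part absorbed by Gronwall/Strichartz and an interaction term, far/near splitting according to $|x_1(t)-x_2(t)|$, far regime controlled by the polynomial moments from $(Exp)_k$) matches the paper's strategy. But there is a genuine gap in your treatment of the near-collision regime. You propose to gain smallness there by non-stationary phase \emph{in $x$}, using the nonvanishing gradient $(a-b)\xi_1+(1-a+b)\xi_2$ and the lower bound $|\xi_1(t)-\xi_2(t)|\gtrsim e^{-C_0t}$. This cannot work as stated: what the energy and Strichartz estimates require is a bound on $\lVert \mathcal N^\eps_I(t)\rVert_{L^2_x}$ (or an $L^1_t L^2_x$ / dual Strichartz norm of it), and a unimodular phase factor does not decrease an $L^2_x$ norm at fixed time; there is no integral in $x$ against a non-oscillating object in which to integrate by parts. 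In the energy estimate the source is paired with $w^\eps$ itself, which is $\eps$-oscillatory with unknown phase content, so no cancellation can be extracted that way either. The oscillation that could in principle help is in \emph{time} (at a crossing $\dot S_1-\dot S_2=E_1-E_2\neq 0$, so the interaction term oscillates at rate $(E_1-E_2)/\eps$ inside Duhamel), but exploiting that means integrating by parts in $s$ against $U^\eps(t-s)$, with derivatives falling on the propagator and the profiles — an argument you neither set up nor estimate. As written, your near-regime contribution carries no smallness and the Gronwall closure fails; relatedly, the exponent $\gamma=\frac{k-2}{2k-2}$ is asserted rather than derived from your two regimes.

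What the paper does instead on the near set is not to seek pointwise-in-time smallness at all, but to bound the \emph{measure} of the collision time set $I^\eps(T)=\{t\in[0,T]:\ |x_1(t)-x_2(t)|\le\eps^\gamma\}$, and to estimate the interaction term crudely there. This is exactly where $d=1$ and $E_1\neq E_2$ enter (Proposition~\ref{prop:2traj}): each maximal subinterval of $I^\eps$ has length $\lesssim \eps^\gamma e^{Ct}|E_1-E_2|^{-1}$ (mean value theorem for $x_1-x_2$ plus the momentum gap $|\xi_1-\xi_2|\gtrsim e^{-Ct}|E_1-E_2|$ forced by energy conservation when the positions are close), and a sign/counting argument for $x_1-x_2$ and $\xi_1-\xi_2$ bounds the \emph{number} of such intervals by $\lesssim e^{Ct}|E_1-E_2|^{-1}$, giving $|I^\eps(t)|\lesssim \eps^\gamma e^{C_0t}|E_1-E_2|^{-2}$. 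Your sketch contains the length estimate implicitly (via the momentum gap) but nothing playing the role of the interval count, which is essential. Finally, the exponent comes from balancing the far-regime gain $\eps^{(k-2)(1/2-\gamma)}$ — the loss from $k$ to $k-2$ occurs because the moment lemma (Lemma~\ref{lem:1655}) consumes two more moments/derivatives than $(Exp)_k$ provides — against the near-regime contribution $\eps^\gamma$, whence $\gamma=\frac{k-2}{2k-2}$. A minor additional remark: each $\varphi_j^\eps$ solves its profile equation exactly, so the residual is just the Taylor remainder $(V-T_2(\cdot,x_j(t)))\varphi_j^\eps$, controlled directly by $(Exp)_3$; there is no need to invoke Theorem~\ref{theo:consistency} for it.
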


It is interesting to see that even though the profiles are nonlinear,
the superposition principle, which is a property of linear equations,
still holds. There are many other such nonlinear superposition
principles in the literature, and we cannot mention them all.
\smallbreak

This result is to be compared with those in
\cite{KMR09} (see also references therein), for several reasons. In
\cite{KMR09}, the authors 
construct a solution for the three-dimensional Schr\"odinger--Poisson
system which behaves, in $H^1(\R^3)$ and asymptotically for large
time, like the sum of 
two ground state solitary waves. The two solitary
waves are centered, in the phase space, at the solution of a two-body
problem: unlike what happens in our case, there exists an interaction
between the trajectories, due to the fact that the Poisson potential
is long range. In our case, the long range aspect of the nonlinearity
(when $d=\si=1$; see \cite{Ozawa91}) does not have such a consequence:
we will see 
that the key point in the proof of the above two results is the fact
that in the wave packet scaling, the two functions $\varphi_1^\eps$
and $\varphi_2^\eps$ do not interact at leading order in the limit
$\eps\to 0$: the nonlinear effects concentrate on the profiles, along
the classical trajectories, and it turns out that  these trajectories
do not meet ``too much''. In \cite{CFG}, another nonlinear
superposition principle was proved, in the scaling of
\eqref{eq:WKB}. However, in \cite{CFG}, nonlinear effects were
localized in space \emph{and} time, so most of the time, the nonlinear
superposition was actually a linear one.

\subsection{Outline of the paper}

In Section~\ref{sec:linearizable},
we first analyze the linearizable  case and prove
Proposition~\ref{linearizablecase} after a short analysis of the
linear case.  Then, in 
Section~\ref{sec:bounded}, we recall basic facts about Strichartz
estimates in this semi-classical framework and prove the
consistency of our approximation on bounded time intervals.
Theorem~\ref{theo:consistency} is proved in
Section~\ref{sec:consistency}. Finally, Section~\ref{sec:cubic} is
focused on the one-dimensional cubic case and  
Section~\ref{sec:superposition} on the analysis of the
nonlinear superposition.  

\begin{notation}
Throughout the paper, in the expression $e^{Ct}$, the constant
$C$ will denote a constant independent of $t$ which may change from
one line to the other. 
\end{notation}

\section{The linearizable case}
\label{sec:linearizable}

\smallbreak

In this section, we assume $\alpha>\alpha_c$ and we  prove
Proposition~\ref{linearizablecase}. We 
first recall estimates in the linear case $\l=0$ which are more 
precise than in \S\ref{sec:lin1}.

\subsection{The linear case}\label{sec:linear}

We suppose here $\lambda=0$. The first remark concerns the
properties of the profile $v$. It is not difficult to prove the
following proposition. 

\begin{proposition}\label{prop:v}
Let $d\ge 1$ and $a\in \Sch(\R^d)$. 
For all $k\in\N$,
there exists $C>0$ such that the solution $v$ to \eqref{eq:vlinear}
satisfies  
\begin{equation*}
  \forall \alpha,\beta\in\N^d, \;\;|\alpha|+|\beta|\le
  k,\;\;\left\|x^\alpha\partial_x^\beta
    v(t)\right\|_{L^2(\R^d)}\lesssim e^{Ct}. 
\end{equation*}
\end{proposition}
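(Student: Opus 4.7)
The plan is to use a direct energy estimate combined with the closed commutator structure of the time-dependent quadratic Hamiltonian
\begin{equation*}
H(t) = -\frac{1}{2}\Delta_y + \frac{1}{2}\<Q(t)y,y\>,
\end{equation*}
which governs \eqref{eq:vlinear}. First I would verify that $H(t)$ is self-adjoint on a fixed form domain (so $\|v(t)\|_{L^2}=\|a\|_{L^2}$, handling $k=0$), and compute the elementary commutators
\begin{equation*}
[H(t),y_j] = -\d_{y_j}, \qquad [H(t),\d_{y_j}] = -\sum_k Q_{jk}(t)\, y_k.
\end{equation*}
The essential structural observation is that these commutators map the family $\{y_1,\dots,y_d,\d_{y_1},\dots,\d_{y_d}\}$ into itself, with coefficients uniformly bounded in $t$: this is precisely where Assumption~\ref{hyp:V} enters, since subquadraticity of $V$ forces $Q(t)=\mathrm{Hess}\,V(x(t))$ to be bounded on $\R$.

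Next, for any monomial $P=L_1 L_2\cdots L_m$ with $L_i\in\{y_j,\d_{y_j}\}$ and $m\le k$, the Leibniz rule yields
\begin{equation*}
[H(t),P] = \sum_{i=1}^{m} L_1\cdots L_{i-1}\,[H(t),L_i]\,L_{i+1}\cdots L_m ,
\end{equation*}
which is itself a linear combination of monomials of order $m$ in $\{y,\d_y\}$ with uniformly bounded coefficients. Consequently
\begin{equation*}
\|[H(t),P]\,v(t)\|_{L^2} \le C_k \sum_{|\alpha'|+|\beta'|\le k} \|y^{\alpha'}\d_y^{\beta'} v(t)\|_{L^2}.
\end{equation*}
The standard energy identity then gives, for each such $P$,
\begin{equation*}
\frac{d}{dt}\|Pv(t)\|_{L^2}^2 = 2\,\IM\<[P,H(t)]v(t),Pv(t)\> \le 2\|[P,H(t)]v(t)\|_{L^2}\|Pv(t)\|_{L^2},
\end{equation*}
so that the quantity
\begin{equation*}
M_k(t) := \sum_{|\alpha|+|\beta|\le k} \|y^\alpha \d_y^\beta v(t)\|_{L^2}
\end{equation*}
satisfies a differential inequality $M_k'(t)\le C_k M_k(t)$. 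Since $a\in\Sch(\R^d)$, $M_k(0)<\infty$, and Gr\"onwall's lemma yields $M_k(t)\lesssim e^{Ct}$, which is the desired estimate.

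To justify these formal manipulations on the (a priori only $L^2$) solution $v$, I would regularize — either by approximating $a$ in $\Sch(\R^d)$ by suitably truncated data, or by standard mollification in the frame of time-dependent quadratic Hamiltonians — derive the estimate uniformly in the approximation parameter, and pass to the limit. The main (though quite routine) obstacle is precisely this closure argument: one must check that each commutation step indeed keeps the total order in $\{y,\d_y\}$ constant, which is a consequence of the fact that $Q$ depends only on $t$ and not on $y$ (i.e.\ $V$ is genuinely quadratic on the tangent space), and that the coefficients $Q_{jk}(t)$ remain bounded along all trajectories, so that the constant $C_k$ in Gr\"onwall is truly independent of $t$.
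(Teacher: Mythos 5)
Your proposal is correct and is essentially the standard argument: the paper itself gives no proof of Proposition~\ref{prop:v}, deferring to \cite[\S 6.1]{Ca-p}, and the proof there is exactly this kind of Gronwall-type energy estimate, exploiting that $Q(t)=\mathrm{Hess}\,V(x(t))$ is bounded uniformly in $t$ by subquadraticity so that commutation of $H(t)$ with the family $\{y_j,\partial_{y_j}\}$ stays in that family with bounded coefficients. The two routine points you flag are indeed harmless: arbitrary monomials $L_1\cdots L_m$ are controlled by the normal-ordered quantities $\|y^\alpha\partial_y^\beta v\|_{L^2}$, $|\alpha|+|\beta|\le m$, via the canonical commutation relations (which only produce lower-order terms), and the formal manipulations are legitimate since the propagator of a time-dependent quadratic Hamiltonian preserves $\Sch(\R^d)$ (or by the density/regularization argument you sketch).
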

A general proof of Proposition~\ref{prop:v} is given for instance in
\cite[\S6.1]{Ca-p}. 
Let us now consider $w^\eps=
\psi^\eps-\varphi^\eps_{\rm lin}$. We have $w^\eps(0)=0$ and  
\begin{equation*}
    i\eps\partial_t
w^\eps+\frac{\eps^2}{2}\Delta w^\eps=V(x)w^\eps-\left(V(x)-
T_2(x,x(t))\right)\varphi^\eps_{\rm lin},
\end{equation*}
where $T_2$ corresponds to a second order Taylor approximation:
\begin{equation*}
T_2(x,a)  :=  V(a)+\<\nabla
V(a),x-a\>+\frac{1}{2}\<\textrm{Hess}
V(a)(x-a),x-a\>. 
\end{equation*}
We have seen in \S\ref{sec:lin1} that 
the standard $L^2$ estimate for Schr\"odinger equations yields
$$\|w^\eps(t)\|_{L^2(\R^d)}\lesssim \sqrt\eps \|y^3
v(t)\|_{L^2(\R^d)}\lesssim \sqrt\eps e^{Ct}.$$ 
 In order to analyze the convergence in $\Sigma$, we can write
\begin{align*}
\left(i\eps\partial_t+\frac{\eps^2}{2} \Delta
  -V(x)\right)\left(\eps\nabla w^\eps\right) & =  
\eps \nabla  V w ^\eps -\eps \nabla L^\eps, \\
\left(i\eps\partial_t+\frac{\eps^2}{2} \Delta
  -V(x)\right)\left(xw^\eps\right) & =
\left[\frac{\eps^2}{2}\Delta,x\right] w^\eps -x L^\eps = \eps^2 \nabla
w^\eps -xL^\eps, 
\end{align*}
where 
\begin{equation}\label{def:Leps}
L^\eps(t,x)  :=   \left(V(x)-
T_2(x,x(t))\right)\varphi^\eps_{\rm lin}(t,x).
\end{equation}
Typically if $d=1$,
\begin{align*}
 L^\eps(t,x) &= \frac{1}{2}\(x-x(t)\)^3 \varphi^\eps_{\rm
   lin}(t,x)\int_0^1 V^{'''}\(x(t)+\theta\(x-x(t)\)\)\theta^2d\theta\\
&=  \frac{\(x-x(t)\)^3 }{2\eps^{1/4}}
 e^{-i\left(S(t)+\xi(t)\cdot
    (x-x(t))\right)/\eps}v
\left(t,\frac{x-x(t)}{\sqrt\eps}\right)
I\(x,x(t)\),
\end{align*}
where
$$I\(x,x(t)\)=\int_0^1 V^{'''}\(x(t)+\theta\(x-x(t)\)\)\theta^2d\theta.$$
Energy estimates make it possible to show
$$\|\eps\nabla
w^\eps(t)\|_{L^2(\R^d)}+\|xw^\eps(t)\|_{L^2(\R^d)}\lesssim \sqrt \eps
e^{Ct}.$$ 
However, the operators $A^\eps$ and $B^\eps$ defined in the introduction yield
more precise results. For instance, $\|\eps \nabla \varphi_{\rm lin}\|_{L^2}$ is
of order $\O(1)$ exactly, because of the phase factor in
\eqref{eq:philin}. We note the formula
\begin{equation}\label{eq:A}
  A^\eps(t) = \sqrt \eps \nabla -i\frac{\xi(t)}{\sqrt
    \eps}= \sqrt \eps e^{i(S(t)+\xi(t)\cdot
    (x-x(t)))/\eps} \nabla \(e^{-i(S(t)+\xi(t)\cdot
    (x-x(t)))/\eps} \cdot\) , 
\end{equation}
so for instance $\|A^\eps(t)\varphi_{\rm lin}\|_{L^2}$ is of order $\O(1)$:
morally, we have gained a factor $\sqrt\eps$. 
\begin{lemma}\label{lem:AB}
  The operators $A^\eps$ and $B^\eps$, defined by
  \begin{equation*}
   A^\eps(t) = \sqrt \eps \nabla -i\frac{\xi(t)}{\sqrt
    \eps} \quad ;\quad B^\eps(t)=\frac{x-x(t)}{\sqrt\eps},  
  \end{equation*}
satisfy the commutation relations:
\begin{align*}
  \left[i\eps\d_t +\frac{\eps^2}{2}\Delta -V,A^\eps(t)\right] &=\sqrt \eps
  \(\nabla V(x)-\nabla V\(x(t)\)\),\\
 \left[i\eps\d_t +\frac{\eps^2}{2}\Delta -V,B^\eps(t)\right] &=\eps A^\eps(t).
\end{align*}
\end{lemma}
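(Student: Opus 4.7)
The plan is to prove both identities by direct computation, splitting the operator $P^\eps := i\eps\d_t + \tfrac{\eps^2}{2}\Delta - V(x)$ into its three summands and computing the commutator of each summand with $A^\eps(t)$, respectively $B^\eps(t)$, separately. The only non-algebraic input is the Hamilton system \eqref{eq:traj}, which converts the time derivatives $\dot x(t)$ and $\dot \xi(t)$ into $\xi(t)$ and $-\nabla V(x(t))$ respectively.

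For the first identity, I would treat the two pieces of $A^\eps(t) = \sqrt\eps\,\nabla - i\xi(t)/\sqrt\eps$ in turn. The gradient piece $\sqrt\eps\,\nabla$ has no explicit time dependence and commutes with $\Delta$, so its only surviving contribution comes from the Leibniz-type bracket
\begin{equation*}
[-V,\sqrt\eps\,\nabla] = \sqrt\eps\,\nabla V(x).
\end{equation*}
The multiplicative piece $-i\xi(t)/\sqrt\eps$ commutes with both $\tfrac{\eps^2}{2}\Delta$ and $-V$, so it contributes only via
\begin{equation*}
[i\eps\d_t,\, -i\xi(t)/\sqrt\eps] = \sqrt\eps\,\dot\xi(t) = -\sqrt\eps\,\nabla V(x(t)),
\end{equation*}
by \eqref{eq:traj}. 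Adding the two surviving contributions gives exactly the claimed right-hand side.

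For the second identity, the multiplication operator $B^\eps(t)$ trivially satisfies $[-V, B^\eps(t)] = 0$, and by \eqref{eq:traj}
\begin{equation*}
[i\eps\d_t,\, B^\eps(t)] = -i\sqrt\eps\,\dot x(t) = -i\sqrt\eps\,\xi(t).
\end{equation*}
The remaining term is computed using only the elementary identity $[\Delta, x_j] = 2\d_j$; after accounting for the scaling this gives $[\tfrac{\eps^2}{2}\Delta, B^\eps(t)] = \eps^{3/2}\,\nabla$. Summing the two nonzero contributions yields
\begin{equation*}
\eps^{3/2}\,\nabla - i\sqrt\eps\,\xi(t) = \eps\(\sqrt\eps\,\nabla - i\xi(t)/\sqrt\eps\) = \eps A^\eps(t),
\end{equation*}
as required.

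There is no genuine obstacle here: the lemma is essentially a bookkeeping exercise. The only thing that requires attention is tracking the powers of $\eps$ in the rescaled operators and then recognizing, at the end of the $B^\eps$ computation, that the combination of $\nabla$ and $\xi(t)$ that appears is precisely $\eps A^\eps(t)$.
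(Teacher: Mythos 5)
Your proof is correct: the paper states Lemma~\ref{lem:AB} without proof, and your direct computation — splitting $i\eps\d_t+\frac{\eps^2}{2}\Delta-V$ into its three summands, using $[-V,\sqrt\eps\,\nabla]=\sqrt\eps\,\nabla V(x)$, $[\Delta,x_j]=2\d_j$, and the Hamilton system \eqref{eq:traj} for the time derivatives of $\xi(t)$ and $x(t)$ — is exactly the intended elementary verification, with all powers of $\eps$ tracked correctly.
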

We can then write
\begin{align*}
\left(i\eps\partial_t+\frac{\eps^2}{2} \Delta
  -V(x)\right)A^\eps(t) w^\eps & =  
\sqrt\eps \(\nabla  V(x)-\nabla V\(x(t)\)\) w ^\eps -A^\eps(t) L^\eps, \\
\left(i\eps\partial_t+\frac{\eps^2}{2} \Delta
  -V(x)\right)B^\eps(t)w^\eps & =    \eps A^\eps(t) w^\eps -B^\eps(t)L^\eps.
\end{align*}
In view of \eqref{eq:A}, we observe
\begin{align*}
  \lVert A^\eps(t) L^\eps\rVert_{L^2(\R^d)}&\lesssim \eps^{3/2}\|x^2
  v(t)\|_{L^2(\R^d)} +\eps^{3/2}\|x^3 \nabla v(t)\|_{L^2(\R^d)} + \eps^2 \|x^3
  v(t)\|_{L^2(\R^d)}\\
&\lesssim \eps^{3/2}e^{Ct},
\end{align*}
thanks to Lemma~\ref{prop:v}. Similarly,
\begin{equation*}
  \lVert B^\eps(t) L^\eps\rVert_{L^2(\R^d)}\lesssim \eps^{3/2}e^{Ct}.
\end{equation*}
Since we have the pointwise estimate
\begin{equation*}
  \left\lvert \sqrt\eps \(\nabla  V(x)-\nabla V\(x(t)\)\)
    w^\eps\right\rvert 
\lesssim \eps\left\lvert B^\eps(t) w^\eps\right\rvert, 
\end{equation*}
energy estimates yield
\begin{equation*}
  \|w^\eps(t)\|_{\mathcal H} 
  \lesssim \int_0^t \(
  \|w^\eps(s)\|_{\mathcal H} +\sqrt\eps e^{Cs}\)ds.
\end{equation*}
We conclude by Gronwall Lemma:
\begin{equation*}
 \|w^\eps(t)\|_{\mathcal H}  \lesssim \sqrt\eps e^{Ct}.
\end{equation*}
We will see in the following subsection that the arguments are somehow
more complicated in the nonlinear setting.

\subsection{Proof of Proposition~\ref{linearizablecase}}

We now assume $\l\not =0$, and $\alpha>\alpha_c$. For
the simplicity of the presentation, we give the detailed proof in the
case $d=1$ only.

We set again $w^\eps=\psi^\eps-\varphi^\eps_{\rm lin}$ and we write
the equation satisfied by $w^\eps$:
\begin{equation*}
    i\eps\partial_t
w^\eps+\frac{\eps^2}{2}\d_x^2 w^\eps=V(x)w^\eps-\left(V(x)-
T_2(x,x(t))\right)\varphi^\eps_{\rm lin}+N^\eps\quad ;\quad
w^\eps_{\mid t=0}=0,
\end{equation*}
where the nonlinear source term is given by
\begin{equation*}
 N^\eps= \l\eps^{\alpha} |\varphi^\eps_{\rm
  lin}+w^\eps|^{2\sigma} (\varphi^\eps_{\rm lin}+w^\eps). 
\end{equation*}
First, since $\l\eps^{\alpha} |\varphi^\eps_{\rm
  lin}+w^\eps|^{2\sigma}\in \R$, the $L^2$ energy estimate for
$w^\eps$ yields
\begin{equation*}
   \|w^\eps(t)\|_{L^2(\R)}\lesssim \frac{1}{\eps}\|L^\eps\|_{L^1([0,t];L^2(\R))}+
\frac{1}{\eps}\left\lVert\eps^{\alpha} |\varphi^\eps_{\rm
  lin}+w^\eps|^{2\sigma} \varphi^\eps_{\rm lin}\right\rVert_{L^1([0,t];L^2(\R))},
\end{equation*}
where we have kept the notation \eqref{def:Leps}.
The contribution of $N^\eps$ cannot be studied directly, since we do
not know yet how to estimate $w^\eps$: since $w^\eps$ will turn out to
be small, we use a 
 bootstrap argument. 

Since we have
 $\|\varphi^\eps_{\rm
   lin}(t)\|_{L^\infty(\R)} = \eps^{-1/4} \|v(t)\|_{L^\infty(\R)}$, 
Proposition~\ref{prop:v} and Sobolev embedding show that there exists
$C_0>0$ such that
\begin{equation*}
  \|\varphi^\eps_{\rm   lin}(t)\|_{L^\infty(\R)}\le C_0 \eps^{-1/4}
  e^{C_0t},\quad \forall t\ge 0. 
\end{equation*}
The bootstrap argument goes as follows. We suppose that
 for $t\in [0,\tau]$ we have  
 \begin{equation}
   \label{eq:solong1}
   \|w^\eps(t)\|_{L^\infty}\le \eps^{-1/4}e^{C_0t},
 \end{equation}
with the same constant $C_0$. 
Since $w^\eps_{\mid t=0}=0$ and $\psi^\eps\in C(\R;\Sigma)$, there
exists $\tau^\eps>0$ (\emph{a priori} depending on $\eps$) such that
\eqref{eq:solong1} holds on $[0,\tau^\eps]$. So long as
\eqref{eq:solong1} holds,
\begin{equation*}
 \left\lVert\eps^{\alpha} |\varphi^\eps_{\rm
  lin}+w^\eps|^{2\sigma} \varphi^\eps_{\rm lin}\right\rVert_{L^2(\R)}
\lesssim \eps^{\alpha-\sigma/2}\|a\|_{L^2(\R)}e^{2\si C_0t}.
\end{equation*}
We infer
\begin{equation*}
  \|w^\eps(t)\|_{L^2(\R)}\lesssim \sqrt\eps e^{Ct}+
  \eps^{\alpha-\alpha_c}e^{2\si C_0t}. 
\end{equation*}
Applying the operators $A^\eps$ and $B^\eps$ to the equation
satisfied by $w^\eps$, we find:
\begin{align*}
\left(i\eps\partial_t+\frac{\eps^2}{2} \d_x^2
  -V(x)\right)A^\eps w^\eps & =  
\sqrt\eps \(  V'(x)- V'\(x(t)\)\) w ^\eps  -A^\eps
L^\eps+A^\eps N^\eps , \\
\left(i\eps\partial_t+\frac{\eps^2}{2} \d_x^2
  -V(x)\right)B^\eps w^\eps & =    \eps A^\eps w^\eps
-B^\eps L^\eps+B^\eps N^\eps.
\end{align*}
We observe that in view of \eqref{eq:A}, $A^\eps$ acts on gauge
invariant non linearities like a derivative. Therefore, so long as
\eqref{eq:solong1} holds, 
\begin{align*}
  \left\| A^\eps(t)N^\eps(t)\right\|_{L^2(\R)}
 &\lesssim \eps^{\alpha}
 \(\|\varphi^\eps_{\rm lin}(t)\|_{L^\infty(\R)}^{2\sigma}+
\|w^\eps(t)\|_{L^\infty(\R)}^{2\sigma}\) \|A^\eps(t)
\varphi^\eps_{\rm lin}(t)\|_{L^2(\R)} \\
&+ \eps^{\alpha}
 \(\|\varphi^\eps_{\rm lin}(t)\|_{L^\infty(\R)}^{2\sigma}+
\|w^\eps(t)\|_{L^\infty(\R)}^{2\sigma}\)  \|A^\eps(t) w^\eps(t)\|_{L^2(\R)}\\
&\lesssim \eps^{\alpha-\sigma/2} e^{2\si C_0 t}\(e^{Ct}+ \|A^\eps(t)
w^\eps(t)\|_{L^2(\R)}\). 
\end{align*}
Similarly, we obtain
\begin{align*}
  \left\| B^\eps(t)N^\eps(t)\right\|_{L^2(\R)}& \lesssim
  \eps^{\alpha-\sigma/2} e^{2\si C_0 t}\(e^{Ct}+ \|B^\eps(t)
w^\eps(t)\|_{L^2(\R)}\).
\end{align*}
We infer, thanks to the linear estimates,
\begin{align*}
   \|A^\eps(t) w^\eps(t)\|_{L^2(\R)}&\lesssim
  \|B^\eps w^\eps\|_{L^1([0,t];L^2(\R))}  + \sqrt \eps e^{Ct}\\&
+
  \eps^{\alpha-\alpha_c} \int_0^t e^{2\si C_0 s}\(e^{Cs}+ \|A^\eps(s)
w^\eps(s)\|_{L^2(\R)}\)ds,\\
\|B^\eps(t) w^\eps(t)\|_{L^2(\R)}&\lesssim \|A^\eps 
  w^\eps\|_{L^1([0,t];L^2(\R))} + \sqrt \eps e^{Ct}\\
& +  \eps^{\alpha-\alpha_c}\int_0^t e^{2\si C_0 s}\(e^{Cs}+ \|B^\eps(s)
w^\eps(s)\|_{L^2(\R)}\)ds .
\end{align*}
Gronwall Lemma yields, so long as \eqref{eq:solong1} holds:
\begin{align*}
\|w^\eps(t)\|_{\mathcal H}&\lesssim  \int_0^t  
  \eps^\gamma e^{Cs} \exp\( C
  \eps^{\alpha-\alpha_c}\int_s^t e^{2\si C_0 s'}ds'\)ds\\
&\lesssim  \exp\( C
  \eps^{\alpha-\alpha_c}e^{2\si C_0 t}\)\int_0^t 
  \eps^\gamma e^{Cs} ds\lesssim  \exp\( C
  \eps^{\alpha-\alpha_c}e^{2\si C_0 t}\) 
  \eps^\gamma e^{Ct},
\end{align*}
where $\gamma = \min(1/2,\alpha-\alpha_c)$. First, we notice that 
\begin{equation*}
  \eps^{\alpha-\alpha_c}e^{2\si C_0 t}\le 1\quad \text{for }0\le t\le
  \frac{\alpha-\alpha_c}{2\si C_0} \log\frac{1}{\eps}. 
\end{equation*}
Then, setting $\kappa =\frac{\alpha-\alpha_c}{2\si C_0}$,
Gagliardo-Nirenberg inequality yields, so long as \eqref{eq:solong1}
holds, with also $t\le \kappa \log\frac{1}{\eps}$, and thanks to the
factorization \eqref{eq:A}, 
\begin{align*}
  \|w^\eps(t)\|_{L^\infty(\R)} \lesssim \frac{1}{
    \eps^{1/4}}\|w^\eps(t)\|^{1/ 2}_{L^2(\R)} \|A^\eps(t)
w^\eps(t)\|_{L^2(\R)}^{1/ 2}
\lesssim \eps^{\gamma-1/4} e^{Ct}. 
\end{align*}
This is enough to show that the bootstrap argument \eqref{eq:solong1}
works provided the time variable is restricted to 
\begin{equation*}
  C\eps^\gamma e^{Ct}\le 1,
\end{equation*}
that is, $0\le t\le C\log\frac{1}{\eps}$ for some $C>0$ independent of
$\eps$. Proposition~\ref{linearizablecase} follows in the case $d=1$.

\smallbreak

To prove Proposition~\ref{linearizablecase} when $d\ge 2$, one can use
Strichartz estimates. This approach 
is more technical. Since the case
$\alpha>\alpha_c$ does not seem the most interesting one, and since we
will use Strichartz estimates in the fully nonlinear case, we choose
not to present the proof of Proposition~\ref{linearizablecase} when
$d\ge 2$.

\section{Fully nonlinear case: bounded time intervals}
\label{sec:bounded}

In this section, we prove Proposition~\ref{prop:bounded}. This gives
us the opportunity to introduce some technical tools which will be
used to study large time asymptotics. 

\subsection{ Strichartz estimates}
\label{sec:strichartz}
\begin{definition}\label{def:adm}
 A pair $(q,r)$ is {\bf admissible} if $2\le r
  \le\frac{2d}{d-2}$ (resp. $2\le r\le \infty$ if $d=1$, $2\le r<
  \infty$ if $d=2$)  
  and 
$$\frac{2}{q}=\delta(r):= d\left( \frac{1}{2}-\frac{1}{r}\right).$$
\end{definition}
Following \cite{GV85,Yajima87,KT}, Strichartz estimates are available
for the Schr\"odinger equation without external potential. Thanks to
the construction of the parametrix performed in
\cite{Fujiwara79,Fujiwara}, similar results are available in the
presence of an external satisfying Assumption~\ref{hyp:V} ($V$ could
even depend on time).
Denote by $U^\eps(t)$ the semi-group associated to
$-\frac{\eps^2}{2}\Delta +V$: $\phi^\eps(t,x)=U^\eps(t)\phi_0(x)$ if
\begin{equation*}
  i\eps \partial_t \phi^\eps +\frac{\eps^2}{2}\Delta \phi^\eps = V \phi^\eps\quad
  ;\quad \phi^\eps(0,x)=\phi_0(x).  
\end{equation*}
From \cite{Fujiwara79}, it satisfies the following properties:
\begin{itemize}
\item The map $t\mapsto U^\eps(t)$ is strongly continuous.
\item $U^\eps(t)U^\eps(s)=U^\eps(t+s)$. 
\item $U^\eps(t)^*=U^\eps(t)^{-1}=U^\eps(-t)$.
\item $U^\eps(t)$ is unitary on $L^2$:
  $\|U^\eps(t)\phi\|_{L^2(\R^d)}= \|\phi\|_{L^2(\R^d)}$.
\item Dispersive properties: there exist $\delta,C>0$
  independent of $\eps\in ]0,1]$ such
  that for all $ t\in \R$ with $|t|\le \delta$,
  \begin{equation*}
    \|U^\eps(t)\|_{L^1(\R^d)\to L^\infty(\R^d)}\le \frac{C}{(\eps|t|)^{d/2}}. 
  \end{equation*}
\end{itemize}
We infer the following result, from \cite{KT}:
\begin{lemma}[Scaled Strichartz
  inequalities]
\label{lem:scaledStrichartz}
Let $(q,r)$, $(q_1,r_1)$ and~$ (q_2,r_2)$ be admissible pairs. Let $I$
be some finite time interval. \\
$1.$ There exists $C=C(r,|I|)$ independent of $\eps$, such that for
all $\phi \in 
L^2(\R^d)$,
\begin{equation}\label{eq:strich}
    \eps^{1/q} \left\| U^\eps(\cdot)\phi \right\|_{L^q(I;L^r(\R^d))}\le C
    \|\phi 
    \|_{L^2(\R^d)}.
  \end{equation}
$2.$ If $I$ contains the origin, $0\in I$, denote
\begin{equation*}
  D_I^\eps(F)(t,x) = \int_{I\cap\{s\le
      t\}} U^\eps(t-s)F(s,x)ds. 
\end{equation*}
There exists $C=C(r_1,r_2,|I|)$ independent of $\eps$
such that  for all $F\in L^{q'_2}(I;L^{r'_2})$,
\begin{equation}\label{eq:strichnl}
      \eps^{1/q_1+1/q_2}\left\lVert D_I^\eps(F)
      \right\rVert_{L^{q_1}(I;L^{r_1}(\R^d))}\le C \left\lVert
      F\right\rVert_{L^{q'_2}\(I;L^{r'_2}(\R^d)\)}.
\end{equation}
\end{lemma}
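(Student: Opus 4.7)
The plan is to combine the two ingredients already recorded above, namely the $L^2$-unitarity of $U^\eps(t)$ and Fujiwara's local-in-time dispersive bound $\|U^\eps(t)\|_{L^1\to L^\infty}\le C(\eps|t|)^{-d/2}$ for $|t|\le \delta$, with the abstract Strichartz machinery of Keel--Tao, while carefully tracking the $\eps$-prefactors.

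First, by Riesz--Thorin interpolation between the dispersive bound and the $L^2$-isometry, one obtains for every admissible $r$ and every $|t|\le \delta$
\begin{equation*}
\|U^\eps(t)\|_{L^{r'}(\R^d)\to L^{r}(\R^d)} \lesssim (\eps|t|)^{-\delta(r)}, \qquad \delta(r)=d\(\tfrac{1}{2}-\tfrac{1}{r}\),
\end{equation*}
with a constant independent of $\eps$. The admissibility identity $2/q=\delta(r)$ is exactly the Hardy--Littlewood--Sobolev exponent relation needed for the $TT^*$ argument to close. On any subinterval $J\subset I$ with $|J|\le\delta$ and $0\in J$, I then feed this $\eps$-dependent kernel into the Keel--Tao abstract framework. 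The bilinear $TT^*$ argument, combined with HLS off the endpoint (and with the genuine endpoint argument of Keel--Tao at $r=2d/(d-2)$, together with Christ--Kiselev to handle the retardation in part~2), produces
\begin{equation*}
\|U^\eps(\cdot)\phi\|_{L^q(J;L^r)}\lesssim \eps^{-1/q}\|\phi\|_{L^2},\qquad \|D_J^\eps F\|_{L^{q_1}(J;L^{r_1})}\lesssim \eps^{-1/q_1-1/q_2}\|F\|_{L^{q_2'}(J;L^{r_2'})},
\end{equation*}
with implicit constants independent of $\eps$. Moving the $\eps$-powers to the left-hand side gives the claimed inequalities on $J$.

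To pass from a short interval $J$ to the full finite interval $I$, I partition $I=\bigcup_{k=1}^N I_k$ with $N\lesssim |I|/\delta$ and each $|I_k|\le \delta$. For part~1, using $L^2$-unitarity to re-initialize the datum at the left endpoint of each $I_k$ and summing the $q$-th powers yields a global constant $C(r,|I|)$ that depends on $N$ but not on $\eps$. For part~2, I split $D_I^\eps F$ according to whether the evaluation time $t$ and the integration time $s$ lie in the same sub-interval or not: the ``same'' contributions are bounded by the local inhomogeneous estimate on each $I_k$, while the ``later'' contributions are bounded by composing the local homogeneous estimate with its dual (the $L^{q_2'}L^{r_2'}\to L^2$ mapping $F\mapsto \int U^\eps(-s)F(s)ds$). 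Summing over the $N$ pieces yields a global constant $C(r_1,r_2,|I|)$, still $\eps$-independent.

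The main technical obstacle is the endpoint pair $r=2d/(d-2)$ for $d\ge 3$ (or one of $r_1,r_2$ at that endpoint in part~2): there HLS alone does not close the bilinear $TT^*$, and one must invoke the genuine Keel--Tao endpoint argument based on bilinear interpolation at the dyadic level of $|t-s|$. The $\eps$-dependence enters that argument only through the dispersive kernel, so the final power $\eps^{-1/q_1-1/q_2}$ is preserved. Below the endpoint, the scheme reduces to the classical Ginibre--Velo $TT^*$ with Christ--Kiselev, and the remaining work is the routine bookkeeping of the partition constant in terms of $|I|$ and $\delta$.
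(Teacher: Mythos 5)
Your proposal is correct and follows essentially the same route as the paper, which obtains the lemma by combining Fujiwara's local-in-time dispersive bound and $L^2$-unitarity of $U^\eps$ with the Keel--Tao machinery (the paper simply cites \cite{Fujiwara79,Fujiwara,KT} and leaves the bookkeeping implicit). Your additional details --- tracking the $\eps$-powers through the $TT^*$ argument, the partition of $I$ into subintervals of length $\le\delta$ with re-initialization by unitarity, and the treatment of the retarded term via Christ--Kiselev together with the homogeneous estimate and its dual --- are exactly the standard steps behind the cited result, and they account correctly for the dependence of the constant on $|I|$ but not on $\eps$.
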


\subsection{Proof of Proposition~\ref{prop:bounded}}
\label{sec:proofbounded}
Denote the error term by
$w^\eps=\psi^\eps-\varphi^\eps$, where $\varphi^\eps$ is now given by
\eqref{eq:phi}, and $u\in C(\R;\Sigma)$ satisfies \eqref{eq:u}. This remainder
solves
\begin{equation}\label{eq:restecrit}
  i\eps\d_t w^\eps +\frac{\eps^2}{2}\Delta w^\eps =V w^\eps -\mathcal L^\eps
  + 
  \l\eps^{\alpha_c}\(|\psi^\eps|^{2\si}\psi^\eps
  -|\varphi^\eps|^{2\si}\varphi^\eps\) \quad ;\quad w^\eps_{\mid t=0}=0, 
\end{equation}
where 
\begin{equation}
  \label{eq:Leps2}
  \mathcal L^\eps(t,x)= \(V(x)-T_2\(x,x(t)\)\) \varphi^\eps(t,x)
\end{equation}
is the nonlinear analogue of $L^\eps$ given by \eqref{def:Leps}. 
Duhamel's formula for $w^\eps$ reads
\begin{equation*}
  \begin{aligned}
     w^\eps(t+\tau) &= U^\eps(\tau)w^\eps(t)+ i\eps^{-1}\int_t^{t+\tau}
     U^\eps(t+\tau-s)\mathcal L^\eps(s)ds\\ 
&  \quad -i\l \eps^{\alpha_c-1}\int_t^{t+\tau}
U^\eps(t+\tau-s)\(|\psi^\eps|^{2\si}\psi^\eps 
  -|\varphi^\eps|^{2\si}\varphi^\eps\)(s)ds. 
  \end{aligned}
\end{equation*}
Introduce the following Lebesgue exponents:
\begin{equation*}
 \theta=\frac{2\si(2\si+2)}{2-(d-2)\si}\quad ;\quad
 q=\frac{4\si+4}{d\si}\quad ;\quad r=2\si+2. 
\end{equation*}
Then $(q,r)$ is admissible, and
\begin{equation*}
  \frac{1}{q'}=\frac{2\si}{\theta}+\frac{1}{q}\quad ;\quad
  \frac{1}{r'}=\frac{2\si}{r}+\frac{1}{r}. 
\end{equation*}
Let $t\ge 0$, $\tau>0$ and
$I=[t,t+\tau]$. Lemma~\ref{lem:scaledStrichartz} yields 
\begin{align*}
  \|w^\eps\|_{L^q(I;L^r)}&\lesssim \eps^{-1/q}\|w^\eps(t)\|_{L^2}+
  \eps^{-1-1/q}\|\mathcal L^\eps\|_{L^1(I;L^2)}\\
&\quad  +
  \eps^{\alpha_c-1-2/q}\left\lVert |\psi^\eps|^{2\si}\psi^\eps
  -|\varphi^\eps|^{2\si}\varphi^\eps\right\rVert_{L^{q'}(I;L^{r'})}.
\end{align*}
In view of the pointwise estimate
\begin{equation*}
  \left\lvert |\psi^\eps|^{2\si}\psi^\eps
  -|\varphi^\eps|^{2\si}\varphi^\eps\right\rvert \lesssim
  \(|w^\eps|^{2\si}+|\varphi^\eps|^{2\si}\) |w^\eps|,
\end{equation*}
we infer
\begin{equation}\label{eq:1410}
  \begin{aligned}
     \|w^\eps\|_{L^q(I;L^r)}&\lesssim \eps^{-1/q}\|w^\eps(t)\|_{L^2}+
  \eps^{-1-1/q}\|\mathcal L^\eps\|_{L^1(I;L^2)} \\
&\quad +
  \eps^{\alpha_c-1-2/q}\(\left\lVert
    w^\eps\right\rVert_{L^{\theta}(I;L^{r})}^{2\si}+\left\lVert
    \varphi^\eps\right\rVert_{L^{\theta}(I;L^{r})}^{2\si}\)\left\lVert
    w^\eps\right\rVert_{L^{q}(I;L^{r})}. 
  \end{aligned}
\end{equation}
Thanks to \cite{Ca-p}, we know that the rescaled functions for
$\psi^\eps$ and $\varphi^\eps$, are such that $u^\eps,u\in
C(\R;\Sigma)$, with estimates which are uniform in $\eps\in
]0,1]$. Typically,  for all $T>0$, there exists $C(T)$ independent of
$\eps$ such that
\begin{equation*}
  \|P u^\eps\|_{L^\infty([0,T];L^2)}
  +\|P u\|_{L^\infty([0,T];L^2)}  \le C(T),\quad P \in
  \{{\rm Id},\nabla,x\}. 
\end{equation*}
In terms of $\psi^\eps$ and $\varphi^\eps$, this yields
\begin{equation}\label{eq:1454}
  \|\mathcal P^\eps \psi^\eps\|_{L^\infty([0,T];L^2)}
  +\|\mathcal P^\eps  \varphi^\eps\|_{L^\infty([0,T];L^2)}\le
  C(T),\quad \mathcal P^\eps \in
  \{{\rm Id},A^\eps,B^\eps\}. 
\end{equation}
The formula
\eqref{eq:A} and Gagliardo--Nirenberg inequality yield, if $0\le \delta(p)<1$,
\begin{equation}\label{eq:GN}
  \|f\|_{L^p(\R^d)}\le
  \frac{C(p)}{\eps^{\delta(p)/2}}\left\lVert f\right\rVert_{L^2(\R^d)}^{1-\delta(p)}
  \left\lVert A^\eps(t)f\right\rVert_{L^2(\R^d)}^{\delta(p)} ,\quad \forall f\in
  H^1(\R^d),\quad \forall t\in \R.
\end{equation}
We infer that there exists $C(T)$ independent of $\eps$ such that 
\begin{equation}\label{est:psiphiLr}
  \lVert \psi^\eps(t)\rVert_{L^r(\R^d)} + \lVert
  \varphi^\eps(t)\rVert_{L^r(\R^d)} \le C(T) \eps^{-\delta(r)/2}
  ,\quad \forall t\in [0,T].
\end{equation}
Recalling that $I=[t,t+\tau]$, we deduce from \eqref{eq:1410}: 
\begin{align*}
  \|w^\eps\|_{L^q(I;L^r)}&\lesssim \eps^{-1/q}\|w^\eps(t)\|_{L^2}+
  \eps^{-1-1/q}\|\mathcal L^\eps\|_{L^1(I;L^2)} \\
&\quad +
  \eps^{\alpha_c-1-2/q}\tau^{2\si/\theta}\eps^{-\si\delta(r)}\left\lVert
    w^\eps\right\rVert_{L^{q}(I;L^{r})}.
\end{align*}
Since $(q,r)$ is admissible, we compute
\begin{equation}\label{eq:1419}
  \alpha_c-1-\frac{2}{q}-\si\delta(r)= \frac{d\si}{2}
  -\frac{2\si+2}{q}= 0,
\end{equation}
hence
\begin{equation}\label{eq:2208}
  \begin{aligned}
    \|w^\eps\|_{L^q(I;L^r)}&\lesssim \eps^{-1/q}\|w^\eps(t)\|_{L^2}+
  \eps^{-1-1/q}\|\mathcal L^\eps\varphi^\eps\|_{L^1(I;L^2)}\\
&\quad  
+
  \tau^{2\si/\theta}\left\lVert
    w^\eps\right\rVert_{L^{q}(I;L^{r})}.
 \end{aligned}
\end{equation}
Choosing $\tau$ sufficiently small, and repeating this manipulation a
finite number of times to cover the time interval $[0,T]$,
we obtain
\begin{equation}\label{eq:2131}
  \|w^\eps\|_{L^q([0,T];L^r)}\lesssim \eps^{-1/q}\|w^\eps\|_{L^1([0,T];L^2)}+
  \eps^{-1-1/q}\|\mathcal L^\eps\|_{L^1([0,T];L^2)} .
\end{equation}
Using Strichartz estimates again and \eqref{est:psiphiLr}, we have,
with $J=[0,t]$ and $0\le t\le T$, 
\begin{align*}
  \|w^\eps\|_{L^\infty(J;L^2)}&\lesssim
  \eps^{-1}\|\mathcal
  L^\eps\|_{L^1(J;L^2)}  +
  \eps^{\alpha_c-1-1/q}\left\lVert |\psi^\eps|^{2\si}\psi^\eps
  -|\varphi^\eps|^{2\si}\varphi^\eps\right\rVert_{L^{q'}(J;L^{r'})}\\
& \lesssim \|w^\eps\|_{L^1(J;L^2)}+
  \eps^{-1}\|\mathcal L^\eps\|_{L^1(J;L^2)} \\
&\quad +
  \eps^{\alpha_c-1-1/q}\eps^{-1-1/q-\si\delta(r)}\|\mathcal
  L^\eps\|_{L^1(J;L^2)}\\ 
&\lesssim \|w^\eps\|_{L^1(J;L^2)}+ \eps^{-1}\|\mathcal
L^\eps\|_{L^1(J;L^2)}, 
\end{align*}
where the last estimate stems from \eqref{eq:1419}. We have the
pointwise control
\begin{equation*}
  \lvert \mathcal L^\eps\rvert \lesssim  \lvert
  x-x(t)\rvert^3 \lvert \varphi^\eps(t,x)\rvert = \eps^{3/2} \eps^{-d/4}\(\lvert
  y\rvert^3 \lvert u(t,y)\rvert\)\Big|_{y=\frac{x-x(t)}{\sqrt\eps}}.
\end{equation*}
We infer
\begin{equation*}
 \eps^{-1}\|\mathcal L^\eps\|_{L^1([0,T];L^2(\R^d))}\lesssim \sqrt\eps
 \lVert \lvert y\rvert^3 u(t,y)\rVert_{L^1([0,T];L^2(\R^d))}, 
\end{equation*}
and the first part of Proposition~\ref{prop:bounded} follows from
Gronwall lemma. 

To establish a control of the $\mathcal H$-norm, we notice that in
view of \eqref{eq:A}, we have, for $\mathcal P^\eps\in \{A^\eps,B^\eps\}$,
\begin{equation*}
  \mathcal P^\eps \(|\phi^\eps|^{2\si}\phi^\eps\)\approx
  |\phi^\eps|^{2\si} \mathcal P^\eps \phi^\eps,
\end{equation*}
where the symbol ``$\approx$'' is here to recall the abuse of notation
when $\mathcal P^\eps =A^\eps$ (there should be two terms on the right
hand side, with coefficients). Lemma~\ref{lem:AB} shows that we have 
\begin{align*}
  \(i\eps\d_t +\frac{\eps^2}{2}\Delta -V\)A^\eps w^\eps &=
  \sqrt\eps\(\nabla V(x)-\nabla V\(x(t)\)\)w^\eps - A^\eps\mathcal
  L^\eps \\
&\quad +\l\eps^{\alpha_c} A^\eps \(|\psi^\eps|^{2\si}\psi^\eps
  -|\varphi^\eps|^{2\si}\varphi^\eps\). 
\end{align*}
The first term of the right hand side is controlled pointwise by
$C \eps |B^\eps w^\eps|$. The $L^2$-norm of the second term is
estimated by
\begin{equation*}
  \|A^\eps(t)\mathcal L^\eps (t)\|_{L^2(\R^d)}\lesssim
  \eps^{3/2}\(\lVert\lvert y\rvert^2 v(t,y)\rVert_{L^2(\R^d)}
  +\lVert\lvert y\rvert^3\nabla v(t,y)\rVert_{L^2(\R^d)}   \). 
\end{equation*}
Finally, we have
\begin{align}
 A^\eps \(|\psi^\eps|^{2\si}\psi^\eps
  -|\varphi^\eps|^{2\si}\varphi^\eps\)& \approx  |\psi^\eps|^{2\si}
  A^\eps\psi^\eps - |\varphi^\eps|^{2\si}A^\eps \varphi^\eps \notag \\
&\approx |w^\eps+\varphi^\eps|^{2\si}(A^\eps
w^\eps+A^\eps\varphi^\eps) - |\varphi^\eps|^{2\si}A^\eps
\varphi^\eps\notag \\
&\approx |w^\eps+\varphi^\eps|^{2\si}A^\eps w^\eps +
\(|w^\eps+\varphi^\eps|^{2\si}-|\varphi^\eps|^{2\si}\)A^\eps
\varphi^\eps.\label{eq:2123}
\end{align}
The first term of \eqref{eq:2123} is handled like in the first
step. For the second term, we have, since $\si> 1/2$, 
\begin{equation*}
 \left\lvert
   |w^\eps+\varphi^\eps|^{2\si}-|\varphi^\eps|^{2\si}\right\rvert
 \lesssim \( |w^\eps|^{2\si-1} + |\varphi^\eps|^{2\si-1}\)|w^\eps|. 
\end{equation*}
Following the same lines as for the $L^2$ estimate, we find
\begin{align*}
  \|A^\eps w^\eps\|_{L^q(I;L^r)}&\lesssim \eps^{-1/q}\|A^\eps(t)
  w^\eps(t)\|_{L^2}+
\eps^{-1/q}\|B^\eps
  w^\eps\|_{L^1(I;L^2)} \\
+
  \eps^{-1-1/q}&\|A^\eps \mathcal L^\eps\|_{L^1(I;L^2)}+
  \tau^{2\si/\theta}\left\lVert
    A^\eps w^\eps\right\rVert_{L^{q}(I;L^{r})} + \tau^{2\si/\theta} \left\lVert
     w^\eps\right\rVert_{L^{q}(I;L^{r})},
\end{align*}
by using the estimate
\begin{equation*}
  \|A^\eps (t)\varphi^\eps(t)\|_{L^r(\R^d)} \lesssim
  \eps^{-\delta(r)/2}\|A^\eps
  (t)\varphi^\eps(t)\|_{L^2(\R^d)}^{1-\delta(r)} \|A^\eps
  (t)^2\varphi^\eps(t)\|_{L^2(\R^d)}^{\delta(r)},
\end{equation*}
and the remark
\begin{equation*}
  \|A^\eps
  (t)^2\varphi^\eps(t)\|_{L^2(\R^d)}= \|\nabla^2 u(t)\|_{L^2(\R^d)}. 
\end{equation*}
Since $\si>1/2$, the nonlinearity $z\mapsto |z|^{2\si}z$ is twice
differentiable, and one can prove $u\in C(\R;H^2(\R^d))$ (\cite{Ca-p}). 
Using \eqref{eq:2131} and the same argument as in
the first step, we infer
\begin{align*}
  \|A^\eps w^\eps\|_{L^q(I;L^r)}&\lesssim \eps^{-1/q}\|A^\eps(t)
  w^\eps(t)\|_{L^2}+ \eps^{-1/q}\|B^\eps
  w^\eps\|_{L^1(I;L^2)} \\
&\quad +
  \eps^{-1-1/q}\|A^\eps \mathcal L^\eps\|_{L^1(I;L^2)}+
  \eps^{-1-1/q}  \left\lVert\mathcal
    L^\eps\right\rVert_{L^1(I;L^2)},
\end{align*}
hence, using Strichartz estimates again, 
\begin{align*}
  \|A^\eps w^\eps\|_{L^\infty(I;L^2)}&\lesssim \|A^\eps(t)
  w^\eps(t)\|_{L^2}+\|B^\eps
  w^\eps\|_{L^1(I;L^2)} +
  \eps^{-1}\|A^\eps \mathcal L^\eps\|_{L^1(I;L^2)}\\
&\quad +
  \eps^{1/q}\left\lVert
    A^\eps w^\eps\right\rVert_{L^{q}(I;L^{r})} +
  \eps^{1/q} \left\lVert 
     w^\eps\right\rVert_{L^{q}(I;L^{r})}\\
&\lesssim \|A^\eps(t)
  w^\eps(t)\|_{L^2}+\|B^\eps
  w^\eps\|_{L^1(I;L^2)} +
  \eps^{-1}\|A^\eps \mathcal L^\eps\|_{L^1(I;L^2)}
\\
&\quad +\eps^{-1}\| \mathcal L^\eps\|_{L^1(I;L^2)}+
  \eps^{1/q} \left\lVert 
     w^\eps\right\rVert_{L^{q}(I;L^{r})}.
\end{align*}
Since we have similar estimates for $B^\eps w^\eps$, we end up with
\begin{align*}
 \|A^\eps w^\eps\|_{L^\infty(J;L^2)} +  \|B^\eps
 w^\eps\|_{L^\infty(J;L^2)}&\lesssim
\|A^\eps w^\eps\|_{L^1(J;L^2)} +  \|B^\eps
 w^\eps\|_{L^1(J;L^2)}\\
&\quad  + \eps^{-1}\sum_{\mathcal P^\eps \in \{ {\rm
     Id},A^\eps,B^\eps\}} \|\mathcal P^\eps \mathcal
 L^\eps\|_{L^1(J;L^2)}\\
&\lesssim
\|A^\eps w^\eps\|_{L^1(J;L^2)} +  \|B^\eps
 w^\eps\|_{L^1(J;L^2)}+\sqrt\eps. 
\end{align*}
Proposition~\ref{prop:bounded} then follows from Gronwall lemma.

\section{Fully nonlinear case: proof of
  Theorem~\ref{theo:consistency}}
\label{sec:consistency} 

To prove Theorem~\ref{theo:consistency}, the strategy consists in
examining more carefully the dependence of the $L^\theta L^r$-norms
with respect to time in the previous proof. Also, since $(Exp)_4$
concerns only $u$, not $u^\eps$, we need a bootstrap argument in order
to use the same control for the error term $w^\eps$ as for the
approximate solution $\varphi^\eps$. This control carries on the
$L^r(\R^d)$-norms, for fixed $t$. By $(Exp)_1$, the relation
\begin{equation*}
  \|A^\eps
  (t)\varphi^\eps(t)\|_{L^2(\R^d)}= \|\nabla u(t)\|_{L^2(\R^d)},
\end{equation*}
and the modified
Gagliardo--Nirenberg inequality \eqref{eq:GN}, we have the following
estimate, for all time:
\begin{equation}\label{eq:phiexp}
  \|\varphi^\eps(t)\|_{L^r(\R^d)}\lesssim \eps^{-\delta(r)/2}
  e^{\kappa t}.
\end{equation}
We will use the following bootstrap argument:
\begin{equation}
  \label{eq:solong2}
  \|w^\eps(t)\|_{L^r(\R^d)}\le\eps^{-\delta(r)/2} e^{\kappa t},\quad t\in
  [0,T], 
\end{equation}
with the same constant $\kappa$ as in \eqref{eq:phiexp} to fix the ideas.
By Proposition~\ref{prop:bounded}, for any $T>0$ independent of
$\eps$, \eqref{eq:solong2} 
is satisfied provided $0<\eps\le \eps(T)$. By this argument only, it
may very well happen that $\eps(T)\to 0$ as $T\to +\infty$. The goal
of the bootstrap argument is to show that we can 
take $T^\eps = C\log\log\frac{1}{\eps}$ for some $C>0$ independent
of $\eps$, provided that $\eps$ is sufficiently small. 
\smallbreak 

The key step to analyze is the absorption argument, which made it
possible to infer \eqref{eq:2131} from \eqref{eq:2208}. We resume the
computations of \S\ref{sec:proofbounded} from the estimate
\eqref{eq:1410}. Rewrite this estimate with  $I=[t,t+\tau]$, $t,\tau\ge
0$:
\begin{equation*}
  \begin{aligned}
     \|w^\eps\|_{L^q(I;L^r)}&\lesssim \eps^{-1/q}\|w^\eps(t)\|_{L^2}+
  \eps^{-1-1/q}\|\mathcal L_\eps\|_{L^1(I;L^2)} \\
&\quad +
  \eps^{\alpha_c-1-2/q}\(\left\lVert
    w^\eps\right\rVert_{L^{\theta}(I;L^{r})}^{2\si}+\left\lVert
    \varphi^\eps\right\rVert_{L^{\theta}(I;L^{r})}^{2\si}\)\left\lVert
    w^\eps\right\rVert_{L^{q}(I;L^{r})}. 
  \end{aligned}
\end{equation*}
For simplicity, assume $\tau\le 1$: \eqref{eq:phiexp} and
\eqref{eq:solong2} yield, in view of \eqref{eq:1419},
\begin{equation*}
  \begin{aligned}
     \|w^\eps\|_{L^q(I;L^r)}& \le M\Big(\eps^{-1/q}\|w^\eps(t)\|_{L^2}+
  \eps^{-1-1/q}\|\mathcal L_\eps\|_{L^1(I;L^2)}\\
&\quad +
  \tau^{1/\theta} e^{2\si \kappa t}\left\lVert
    w^\eps\right\rVert_{L^{q}(I;L^{r})}\Big), 
  \end{aligned}
\end{equation*}
for some constant $M$ independent of $\eps$, $t\ge 0$ and $0\le
\tau\le 1$.
In order for the last term to be absorbed by the left hand side, we
have to assume
\begin{equation*}
  M \tau^{1/\theta} e^{2\si \kappa t}\le \frac{1}{2},\quad \text{that is},
  \quad \tau \le C e^{-C t}
\end{equation*}
for some $C$ independent of $\eps$, $t\ge 0$ and $0\le
\tau\le 1$. Proceeding with the same argument as in
\S\ref{sec:proofbounded}, we come up with:
\begin{align*}
  \|w^\eps\|_{L^\infty([0,t];L^2)}&\lesssim \int_0^t
  e^{Cs}\|w^\eps\|_{L^\infty([0,s];L^2)} ds + \eps^{-1}\int_0^t
  e^{Cs}\|\mathcal L_\eps(s)\|_{L^2}  ds\\
&\lesssim \int_0^t
  e^{Cs}\|w^\eps\|_{L^\infty([0,s];L^2)} ds + \sqrt\eps  e^{Ct},
\end{align*}
where we have used $(Exp)_3$. Gronwall lemma yields:
\begin{equation*}
  \|w^\eps\|_{L^\infty([0,t];L^2)}\lesssim \sqrt \eps\exp\(C\exp
  (Ct)\)\lesssim \sqrt \eps\exp\(\exp
  (2Ct)\).
\end{equation*}
Mimicking the computations of \S\ref{sec:proofbounded}, we have,
thanks to $(Exp)_4$ and so long as \eqref{eq:solong2} holds,
\begin{equation*}
  \|A^\eps w^\eps\|_{L^\infty([0,t];L^2)}+\|B^\eps
  w^\eps\|_{L^\infty([0,t];L^2)} \lesssim \sqrt \eps\exp\(\exp (Ct)\).
\end{equation*}
To conclude, we check that \eqref{eq:solong2} holds for $t\le c
\log\log\frac{1}{\eps}$, provided $c$ is sufficiently
small. Gagliardo--Nirenberg inequality \eqref{eq:GN} yields
\begin{align*}
  \|w^\eps(t)\|_{L^r(\R^d)}&\lesssim \eps^{-\delta(r)/2}
  \|w^\eps\|_{L^\infty([0,t];L^2)}^{1-\delta(r)}\|A^\eps
  w^\eps\|_{L^\infty([0,t];L^2)}^{\delta(r)}\\
&  \le \mathcal M
  \eps^{-\delta(r)/2} \sqrt \eps\exp\(\exp (Ct)\).
\end{align*}
Therefore, taking $\eps$ sufficiently small, \eqref{eq:solong2} holds
as long as
\begin{equation*}
  \mathcal M \sqrt \eps\exp\(\exp (Ct)\)\le e^{\kappa t}.
\end{equation*}
We check that for large $t$ and sufficiently small $\eps$, this
remains true for $t\le c \log \log \frac{1}{\eps}$, with $c$ possibly
small, but independent of $\eps\in ]0,\eps_0]$. This completes the
proof of Theorem~\ref{theo:consistency}.

\section{Ehrenfest time in the one-dimensional cubic case}
\label{sec:cubic}

As pointed out in the introduction, since we consider nonlinearities
of the form $z\mapsto |z|^{2\si}z$ with $\si\in \N$, the
one-dimensional cubic case is special. Not because it is integrable
(see Remark~\ref{rem:int}: \eqref{eq:u} is not completely integrable, 
unless no approximation is needed to describe the wave packets,
$\psi^\eps\equiv \varphi^\eps$),  but
because it is the only case where the nonlinearity is
$L^2$-subcritical, $\si<2/d$.  This case is in contrast with the
general case of energy-subcritical nonlinearities: without any other
assumption on $Q(t)$ than $Q\in C^\infty(\R;\R)\cap L^\infty(\R)$, it
seems that the only \emph{a priori} control that we have for $u$,
solution to \eqref{eq:u}, is 
\begin{equation}\label{eq:L2}
  \|u(t)\|_{L^2(\R^d)}=\|a\|_{L^2(\R^d)},\quad \forall t\in \R.
\end{equation}
A remarkable case where other \emph{a priori} estimates are available
is when $Q$ is constant, but in this case, $\psi^\eps\equiv
\varphi^\eps$. Otherwise, the most general reasonable
assumption seems to be $(Exp)_k$, which has been considered in the
previous section. Note also that if $d=1$,  the notations of
\S\ref{sec:proofbounded} become: 
\begin{equation*}
  \theta= \frac{8}{3}\quad ;\quad q=8\quad ;\quad r=4. 
\end{equation*}
So to improve the result of
Theorem~\ref{theo:consistency}, we assume $\si=d=1$ and start with the
crucial remark:
\begin{lemma}\label{lem:unifcubic}
  Suppose $\si=d=1$, and for $a\in L^2(\R)$, consider $u\in
  C(\R;L^2(\R))$ the solution to \eqref{eq:u}. Then 
  there exists $C$ such that
  \begin{equation*}
    \|u\|_{L^8([t,t+1];L^4(\R))}\le C \lVert a\rVert_{L^2(\R)},\quad \forall
    t\in \R.
  \end{equation*}
\end{lemma}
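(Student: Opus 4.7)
The plan is to combine a local-in-time Strichartz bootstrap with the $L^2$-conservation \eqref{eq:L2}, exploiting the $L^2$-subcriticality $\sigma = 1 < 2/d$ to gain a positive power of the interval length. Let $M = \|a\|_{L^2(\R)}$ and work with the admissible pair $(q,r)=(8,4)$, whose dual is $(q',r') = (8/7, 4/3)$. I note that although Lemma~\ref{lem:scaledStrichartz} is stated for the semi-classical propagator $U^\eps$ associated to the autonomous subquadratic potential $V$, Fujiwara's parametrix construction still applies with the bounded time-dependent potential $\frac12\langle Q(\tau)y,y\rangle$ of \eqref{eq:u} (as the authors acknowledge: ``$V$ could even depend on time''), and taking $\eps=1$ gives Strichartz constants independent of the base time.

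Fix $t_0 \in \R$ and any $I = [t_0, t_0+T]$ with $T \le 1$. Duhamel and Strichartz give
\begin{equation*}
\|u\|_{L^8(I; L^4)} \le C_0 \|u(t_0)\|_{L^2} + C_0 \bigl\||u|^2 u\bigr\|_{L^{8/7}(I; L^{4/3})}.
\end{equation*}
Since $\||u|^2 u\|_{L^{4/3}_x} = \|u\|_{L^4_x}^3$, Hölder in time with exponents $24/7$ and $8$ on $I$ (using $1/(24/7) - 1/8 = 1/6$) yields
\begin{equation*}
\bigl\||u|^2 u\bigr\|_{L^{8/7}(I; L^{4/3})} = \|u\|_{L^{24/7}(I; L^4)}^3 \le T^{1/2}\|u\|_{L^8(I; L^4)}^3.
\end{equation*}
The positive power $T^{1/2}$ is the reflection of $L^2$-subcriticality and is the crucial gain.

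Setting $f(T) := \|u\|_{L^8([t_0, t_0+T]; L^4)}$, I then have $f(T) \le C_0 M + C_0 T^{1/2} f(T)^3$, with $f$ continuous in $T$ and $f(0) = 0$. Choose $T^\star = T^\star(M) > 0$ so that $C_0 (T^\star)^{1/2}(2C_0 M)^3 \le C_0 M$ (for instance $T^\star = \min(1, 1/(64 C_0^4 M^4))$). A standard continuity argument gives $f(T^\star) \le 2 C_0 M$.

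To upgrade this local bound to the claim, I invoke \eqref{eq:L2}: $\|u(s)\|_{L^2} = M$ for every $s \in \R$, so the same local argument applies starting from any time. Partition $[t, t+1]$ into $N = \lceil 1/T^\star \rceil$ consecutive subintervals $I_1,\dots,I_N$ of length at most $T^\star$; on each, the above gives $\|u\|_{L^8(I_k; L^4)} \le 2 C_0 M$. Subadditivity of $\|\cdot\|_{L^8}^8$ on disjoint unions yields
\begin{equation*}
\|u\|_{L^8([t,t+1]; L^4)}^8 \le \sum_{k=1}^{N}\|u\|_{L^8(I_k;L^4)}^8 \le N (2C_0 M)^8,
\end{equation*}
hence the desired bound with $C = 2 C_0 N^{1/8}$, which depends on $M$ but is independent of $t$. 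The main obstacle is verifying that Strichartz estimates for the propagator of $-\tfrac12\Delta + \tfrac12\langle Q(t)y,y\rangle$ hold with constants independent of the base time $t_0$ on intervals of length $1$; this boils down to translation invariance in the Fujiwara parametrix, coming from the fact that $Q$ is bounded uniformly in time under Assumption~\ref{hyp:V}.
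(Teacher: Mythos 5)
Your proof is correct and follows essentially the same route as the paper: uniform-in-time local Strichartz estimates for the propagator of the time-dependent quadratic Hamiltonian (via Fujiwara), H\"older in time exploiting $L^2$-subcriticality to gain the factor $T^{1/2}$, a continuity/bootstrap argument on small intervals, and a covering of $[t,t+1]$ using $L^2$-conservation. (The only quibble is the illustrative choice $T^\star=\min\bigl(1,1/(64C_0^4M^4)\bigr)$, which should rather involve $C_0^6$ so that $C_0(T^\star)^{1/2}(2C_0M)^3\le C_0M$; this is immaterial.)
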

\begin{proof}
First, recall that since $\si=d=1$ and $a\in L^2(\R)$,
\eqref{eq:u} has a unique solution
\begin{equation*}
  u \in  C(\R;L^2(\R)) \cap L^8_{\rm loc}\(\R;L^4(\R)\).
\end{equation*}
In addition, \eqref{eq:L2} holds. Denoting 
\begin{equation*}
  W(t,x) = \frac{1}{2}V''\(x(t)\)x^2,
\end{equation*}
it has been established in \cite{Ca-p} that since $V''\in
L^\infty(\R;\R)$, uniform local Strichartz estimates are available
for the linear propagator. Following \cite{Fujiwara79,Fujiwara}, let
$U(t,s)$ be such that as $u(t,x) = U(t,s)u_0(x)$ is the solution to 
\begin{equation*}
  i \d_t u+\frac{1}{2}\Delta u  = W(t,x) u \quad ;\quad u(s,x)=u_0(x).
\end{equation*}
Then Lemma~\ref{lem:scaledStrichartz} remains true (with $\eps=1$)
when $U^\eps(t-s)$ is replaced with $U(t,s)$, $t,s\in \R$. 
\smallbreak

Let $t,\tau\ge 0$, with $\tau\le 1$, and denote
  $I=[t,t+\tau]$. Strichartz inequalities  yield:
  \begin{equation*}
    \|u\|_{L^8(I;L^4)}\le C(\tau)\|u(t)\|_{L^2}+C(\tau)\left\lVert
    |u|^2u\right\rVert_{L^{8/7}(I;L^{4/3})}. 
  \end{equation*}
In view of \eqref{eq:L2}, and
using H\"older inequality after the decomposition
\begin{equation*}
  \frac{3}{4} = \frac{3}{4}+\frac{1}{\infty} 
\quad ;\quad
  \frac{7}{8} = \frac{3}{8}+\frac{1}{2},
\end{equation*}
we infer
\begin{equation*}
  \|u\|_{L^8(I;L^4)}\le C(\tau) \|a\|_{L^2}+C(\tau)\sqrt\tau
  \|u\|_{L^8(I;L^4)}^3 .
\end{equation*}
Since $\tau\le 1$, we may assume that $C(\tau)$ does not depend on
$\tau$: 
\begin{equation*}
  \|u\|_{L^8(I;L^4)}\le C \|a\|_{L^2}+C\sqrt\tau
  \|u\|_{L^8(I;L^4)}^3 .
\end{equation*}
We use the following standard bootstrap argument, borrowed from \cite{BG99}:
\begin{lemma}[Bootstrap argument]\label{lem:boot}
Let $f=f(t)$ be a nonnegative continuous function on $[0,T]$ such
that, for every $t\in [0,T]$, 
\begin{equation*}
  f(t)\le M  + \delta f(t)^\theta,
\end{equation*}
where $M,\delta>0$ and $\theta >1$ are constants such that
\begin{equation*}
  M <\left(1-\frac{1}{\theta} \right)\frac{1}{(\theta \delta)^{1/(\theta
-1)}}\quad ;\quad f(0)\le  \frac{1}{(\theta \delta)^{1/(\theta
-1)}}.
\end{equation*}
Then, for every $t\in [0,T]$, we have
\begin{equation*}
  f(t)\le \frac{\theta}{\theta -1}M.
\end{equation*}
\end{lemma}
Lemma~\ref{lem:unifcubic} follows with $[t,t+1]$ replaced with
$[t,t+\tau]$ for $0<\tau\le \tau_0\ll 1$. We then
cover any  
interval of the form $[t,t+1]$ by a finite
number of intervals of length at most $\tau_0$, and
Lemma~\ref{lem:unifcubic} is proved.
\end{proof}

\begin{proof}[Proof of Theorem~\ref{theo:cubic}] Like in the previous
  section, we resume the proof of Proposition~\ref{prop:bounded}, and
  pay a more precise attention to the dependence of various constants
  upon time. We modify the bootstrap argument of
  \S\ref{sec:consistency}: in view of Lemma~\ref{lem:unifcubic},
  \eqref{eq:solong2} is replaced by 
  \begin{equation}
    \label{eq:solong3}
    \|w^\eps\|_{L^8([t,t+1];L^4(\R))}\le  \eps^{-1/8} \lVert
    a\rVert_{L^2(\R)},\quad \forall 
    t\in [0,T].
  \end{equation}
By Proposition~\ref{prop:bounded}, for any $T>0$ independent of
$\eps$, \eqref{eq:solong3} remains true provided $0<\eps\le
\eps(T)$. Keeping the notations of \S\ref{sec:proofbounded}, we have:
\begin{equation*}
  \theta= \frac{8}{3}\quad ;\quad q=8\quad ;\quad r=4. 
\end{equation*}
With $t\ge 0$, $\tau \in ]0,1]$ and $I=[t,t+\tau]$, \eqref{eq:1410} becomes
  \begin{align}
     \|w^\eps\|_{L^8(I;L^4)}&\lesssim \eps^{-1/8}\|w^\eps(t)\|_{L^2}+
  \eps^{-1-1/8}\|\mathcal L_\eps\|_{L^1(I;L^2)} \notag \\
&\quad +
  \eps^{1/4}\(\left\lVert
    w^\eps\right\rVert_{L^{8/3}(I;L^{4})}^{2}+\left\lVert
    \varphi^\eps\right\rVert_{L^{8/3}(I;L^{4})}^{2}\)\left\lVert
    w^\eps\right\rVert_{L^{8}(I;L^{4})}\notag \\
& \lesssim \eps^{-1/8}\|w^\eps(t)\|_{L^2}+
  \eps^{-1-1/8}\|\mathcal L_\eps\|_{L^1(I;L^2)} \notag \\
&\quad +
  \eps^{1/4}\tau^{1/4}\(\left\lVert
    w^\eps\right\rVert_{L^{8}(I;L^{4})}^{2}+\left\lVert
    \varphi^\eps\right\rVert_{L^{8}(I;L^{4})}^{2}\)\left\lVert
    w^\eps\right\rVert_{L^{8}(I;L^{4})}\notag \\
& \lesssim \eps^{-1/8}\|w^\eps(t)\|_{L^2}+
  \eps^{-1-1/8}\|\mathcal L_\eps\|_{L^1(I;L^2)}  +\tau^{1/4}\left\lVert
    w^\eps\right\rVert_{L^{8}(I;L^{4})},\label{eq:1626}
  \end{align}
where we have used Lemma~\ref{lem:unifcubic} and
\eqref{eq:solong3}. Choosing $\tau$ sufficiently small and independent
of $t$, we come up with
\begin{align*}
  \|w^\eps\|_{L^\infty([0,t];L^2)}&\lesssim \|w^\eps\|_{L^1([0,t];L^2)}
  + \eps^{-1} \|\mathcal L_\eps\|_{L^1([0,t];L^2)}\\
&\lesssim
  \|w^\eps\|_{L^1([0,t];L^2)} 
  + \sqrt\eps \int_0^te^{Cs}ds,
\end{align*}
by $(Exp)_3$. Gronwall lemma yields
\begin{equation*}
  \|w^\eps\|_{L^\infty([0,t];L^2)}\lesssim \sqrt\eps e^{Ct}.
\end{equation*}
Back to \eqref{eq:1626}, we infer, with $\tau\ll 1$,
\begin{equation*}
  \|w^\eps\|_{L^8(I;L^4)}\lesssim \eps^{1/4}e^{Ct}.
\end{equation*}
Therefore, there exists $c>0$ such that \eqref{eq:solong3} holds for
$T=c\log\frac{1}{\eps}$ provided $\eps$ is sufficiently small, hence
the first part of Theorem~\ref{theo:cubic}.
\smallbreak

It is then quite straightforward to infer the estimates in $\mathcal
H$, by rewriting the end of the proof of 
Proposition~\ref{prop:bounded}, with \eqref{eq:solong3} in mind. 
\end{proof}

\section{Nonlinear superposition}
\label{sec:superposition}

\subsection{General considerations}

The proof of Proposition~\ref{prop:superposition}  and
Theorem~\ref{theo:superposition}  follows the same lines as the
proof of Proposition~\ref{prop:bounded} and
Theorem~\ref{theo:cubic}. The main difference comes 
from the way one deals with the nonlinearity, since new terms
appear. These terms come from the nonlinear interaction between the two
profiles $\varphi^\eps_1$ and $\varphi_2^\eps$. Denote $w^\eps =
\psi^\eps -\varphi_1^\eps-\varphi^\eps_2$. It solves
\begin{equation*}
  i\eps\d_t w^\eps +\frac{\eps^2}{2}\Delta w^\eps = Vw^\eps -\mathcal
  L^\eps+\l\mathcal N^\eps\quad ;\quad w^\eps_{\mid t=0}=0,
\end{equation*}
where we have now
\begin{equation*}
  \mathcal L^\eps(t,x)= \(V(x)-T_2\(x,x(t)\)\)\(
  \varphi_1^\eps(t,x)+\varphi_2^\eps(t,x)\),
\end{equation*}
and
\begin{equation*}
\mathcal N^\eps=\eps^{\alpha_c} \(\lvert w^\eps +
  \varphi_1^\eps+\varphi_2^\eps\rvert^{2\si}(w^\eps +
  \varphi_1^\eps+\varphi_2^\eps)- \lvert 
  \varphi_1^\eps\rvert^{2\si}\varphi_1^\eps-\lvert
  \varphi_2^\eps\rvert^{2\si}\varphi_2^\eps\). 
\end{equation*}
Decompose $\mathcal N^\eps$ as the sum of a semilinear term and an
interaction source term: $\mathcal N^\eps=\mathcal N^\eps_S+\mathcal
N^\eps_I$, where
\begin{align*}
  \mathcal N^\eps_S &= \eps^{\alpha_c}\(\lvert w^\eps +
  \varphi_1^\eps+\varphi_2^\eps\rvert^{2\si}(w^\eps +
  \varphi_1^\eps+\varphi_2^\eps)-\lvert 
  \varphi_1^\eps+\varphi_2^\eps\rvert^{2\si}(
  \varphi_1^\eps+\varphi_2^\eps)\),\\
\mathcal N^\eps_I&= \eps^{\alpha_c}\(\lvert 
  \varphi_1^\eps+\varphi_2^\eps\rvert^{2\si}(
  \varphi_1^\eps+\varphi_2^\eps)-\lvert 
  \varphi_1^\eps\rvert^{2\si}\varphi_1^\eps-\lvert
  \varphi_2^\eps\rvert^{2\si}\varphi_2^\eps\). 
\end{align*}
We see that the term $\mathcal N^\eps_S$ is the exact analogue of the
nonlinear term in \eqref{eq:restecrit}, where we have simply replaced
$\varphi^\eps$ with $\varphi_1^\eps+\varphi_2^\eps$. We can thus
repeat the proofs of Proposition~\ref{prop:bounded} and
Theorem~\ref{theo:cubic}, respectively, up to the control of the
new source term $\mathcal N^\eps_I$ (the linear source term $\mathcal
L^\eps$ is treated as before). More precisely, we have to estimate
\begin{equation*}
 \frac{1}{\eps} \lVert \mathcal N^\eps_I\rVert_{L^1([0,t];L^2(\R^d))}. 
\end{equation*}
The first remark consists in noticing that if $\si$ is an integer, 
$\mathcal N^\eps_I$ can be estimated (pointwise) by a sum of terms of
the form 
\begin{equation*}
\eps^{\alpha_c}\lvert \varphi_1^\eps\rvert^{\ell_1}\times \lvert
\varphi_2^\eps\rvert^{\ell_2},\quad \ell_1,\ell_2\ge 1, \
\ell_1+\ell_2=2\si+1. 
\end{equation*}
To be more precise, we have the control, for fixed time,
\begin{equation*}
\frac{1}{\eps}  \lVert \mathcal N^\eps_I(t)\rVert_{L^2(\R^d)}\lesssim
  \eps^{d\si/2} \sum_{\ell_1,\ell_2\ge 1, \
\ell_1+\ell_2=2\si+1 } \left\lVert
\lvert\varphi_1^\eps\rvert^{\ell_1}\times \lvert 
\varphi_2^\eps\rvert^{\ell_2}\right\rVert_{L^2(\R^d)}.
\end{equation*}
We will see below why the right hand side must be expected to be
small, when integrated with respect to time. 
We need to estimate
\begin{equation*}
 \eps^{d\si/2} \left\lVert \(\varphi_1^\eps\)^{\ell_1}
\(\varphi_2^\eps\)^{\ell_2}\right\rVert_{L^2(\R^d)} = \left\lVert
 u_1^{\ell_1}\(t,x -\frac{x_1(t)-x_2(t)}{\sqrt\eps}\)
 u_2^{\ell_2}(t,x)\right\rVert_{L^2(\R^d_x)},
\end{equation*}
with $\ell_1,\ell_2\ge 1$, $\ell_1+\ell_2=2\si+1$. 
 We have the following lemma:
 \begin{lemma}\label{lem:1655} 
Suppose $d\le 3$, and $\si$ is an integer. Let $T\in\R$, $0<\gamma<1/2$, and 
\begin{equation}
  \label{eq:ieps}
  I^\eps(T)=\{t\in[0,T],\;\;|x_1(t)-x_2(t)|\le \eps^\gamma\}.
\end{equation}
  Then, for all $k>d/2$,
  \begin{equation*}
    \frac{1}{\eps}\int_0^T\|\mathcal N_I^\eps(t)\|_{\Sigma_\eps}dt\lesssim
 \(M_{k+2}(T)\)^{2\si+1} \left(T 
   \eps^{k(1/2-\gamma)} +  |I^\eps(T)|\right)e^{CT},
  \end{equation*}
where 
$\displaystyle  M_k(T)= \sup \left\{\|\<x\>^\alpha \partial_x^\beta
  u_j\|_{L^\infty([0,T];L^2(\R^d)}; \quad j\in \{1,2\},\quad
  |\alpha|+|\beta|\le k
\right\}.$
\end{lemma}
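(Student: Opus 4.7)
The plan is to reduce the interaction term to a sum of products of the rescaled profiles $u_1,u_2$ evaluated at shifted arguments, and to exploit the fact that outside $I^\eps(T)$ the centers $x_1(t),x_2(t)$ are separated by more than $\eps^\gamma$, which translates into a separation of order $\eps^{\gamma-1/2}$ in the wave-packet variable. First I would expand $\mathcal N_I^\eps$ algebraically. Since $\sigma\in\N$, the expression $|\varphi_1^\eps+\varphi_2^\eps|^{2\sigma}(\varphi_1^\eps+\varphi_2^\eps)$ is a polynomial in $\varphi_1^\eps,\overline{\varphi_1^\eps},\varphi_2^\eps,\overline{\varphi_2^\eps}$, and subtracting $|\varphi_j^\eps|^{2\sigma}\varphi_j^\eps$ precisely removes the pure monomials. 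Therefore $\mathcal N_I^\eps$ is a finite sum of terms each bounded pointwise by $\eps^{\alpha_c}|\varphi_1^\eps|^{\ell_1}|\varphi_2^\eps|^{\ell_2}$ with $\ell_1,\ell_2\geq 1$, $\ell_1+\ell_2=2\sigma+1$. Using $|\varphi_j^\eps(t,x)|=\eps^{-d/4}|u_j(t,(x-x_j(t))/\sqrt\eps)|$, the relation $\alpha_c=1+d\sigma/2$, and a change of variables centred on $x_2(t)$, one obtains
\begin{equation*}
  \frac{1}{\eps}\bigl\||\varphi_1^\eps|^{\ell_1}|\varphi_2^\eps|^{\ell_2}(t,\cdot)\bigr\|_{L^2(\R^d)}\eps^{\alpha_c}
  = \bigl\|u_1^{\ell_1}(t,\cdot-z(t))\,u_2^{\ell_2}(t,\cdot)\bigr\|_{L^2(\R^d)},
\end{equation*}
where $z(t)=(x_1(t)-x_2(t))/\sqrt\eps$.

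Second, I would estimate the rescaled product via spatial separation. Since $d\le 3$, the embedding $H^2(\R^d)\hookrightarrow L^\infty(\R^d)$ applied to $\langle y\rangle^k u_j$ gives, for any $k>d/2$,
\begin{equation*}
  \|\langle y\rangle^k u_j(t,\cdot)\|_{L^\infty(\R^d)}\lesssim M_{k+2}(T),\quad t\in[0,T],\ j\in\{1,2\}.
\end{equation*}
For $t\notin I^\eps(T)$ one has $|z(t)|\geq \eps^{\gamma-1/2}$; partition $\R^d=A(t)\cup B(t)$ with $A(t)=\{|y|\leq |z(t)|/2\}$ and $B(t)$ its complement. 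On $A(t)$, $|y-z(t)|\geq |z(t)|/2$ and hence $|u_1(t,y-z(t))|\lesssim M_{k+2}(T)\eps^{k(1/2-\gamma)}$; on $B(t)$, $|u_2(t,y)|\lesssim M_{k+2}(T)\eps^{k(1/2-\gamma)}$ by the same reasoning. Extracting the small factor in $L^\infty$ on each piece and controlling the remaining $2\sigma$-power product in $L^2$ via Sobolev bounds dominated by $M_{k+2}(T)^{2\sigma}$ yields
\begin{equation*}
  \bigl\|u_1^{\ell_1}(t,\cdot-z(t))u_2^{\ell_2}(t,\cdot)\bigr\|_{L^2(\R^d)}\lesssim M_{k+2}(T)^{2\sigma+1}\eps^{k(1/2-\gamma)}.
\end{equation*}
For $t\in I^\eps(T)$, I would simply use the crude pointwise bound by $M_{k+2}(T)^{2\sigma+1}$. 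Integrating in time produces the announced $L^2$-contribution $\bigl(T\eps^{k(1/2-\gamma)}+|I^\eps(T)|\bigr)M_{k+2}(T)^{2\sigma+1}$.

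Third, to upgrade from $L^2$ to $\Sigma_\eps$, I would treat separately the actions of $\eps\nabla$ and of multiplication by $x$. The operator $\eps\nabla$ applied to a monomial in $\varphi_1^\eps,\varphi_2^\eps$ produces, via the factorisation analogous to \eqref{eq:A}, terms involving $\nabla u_j$ and factors $i\xi_j(t)$; the latter grow at most like $e^{C_0t}$ by Lemma~\ref{lem:traj}, while the former are still controlled by $M_{k+2}(T)$. Multiplication by $x$ is decomposed as $x=(x-x_j(t))+x_j(t)$: the first piece rescales to $\sqrt\eps\,y u_j$ and is innocuous, while $|x_j(t)|\lesssim e^{C_0t}$ is the other source of the exponential factor. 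Together, these contributions yield the $e^{CT}$ multiplier in the final bound, while the separation argument of the second step goes through identically (with the same Sobolev loss $k\to k+2$).

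The main obstacle is the bookkeeping in this last step: after $\eps\nabla$ or multiplication by $x-x_j(t)$ hits a mixed monomial, one must check that every resulting term still contains at least one factor $u_1(\cdot-z(t))$ or $u_2(\cdot)$ on a region of spatial separation, so that the gain $\eps^{k(1/2-\gamma)}$ survives. The symmetry of the expansion together with the Leibniz rule makes this checking mechanical, but it is the reason why one is forced to use $M_{k+2}(T)$ rather than $M_k(T)$ in the final statement.
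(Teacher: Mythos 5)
Your proof is correct and, in structure, matches the paper's: the same expansion of $\mathcal N^\eps_I$ into cross terms bounded by $\eps^{\alpha_c}|\varphi_1^\eps|^{\ell_1}|\varphi_2^\eps|^{\ell_2}$ with $\ell_1,\ell_2\ge 1$, $\ell_1+\ell_2=2\si+1$, the same rescaling identity producing $\lVert u_1^{\ell_1}(t,\cdot-z(t))\,u_2^{\ell_2}(t,\cdot)\rVert_{L^2}$ with $z(t)=(x_1(t)-x_2(t))/\sqrt\eps$, the same splitting of $[0,T]$ into $I^\eps(T)$ and its complement, and the same treatment of the $\Sigma_\eps$ part through $\lVert\eps\nabla\varphi_j^\eps\rVert\lesssim\sqrt\eps\lVert\nabla u_j\rVert+|\xi_j(t)|\lVert u_j\rVert$, $x=(x-x_j(t))+x_j(t)$, and Lemma~\ref{lem:traj} for the $e^{CT}$ factor. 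The one genuine divergence is the mechanism converting the separation $|z(t)|\ge\eps^{\gamma-1/2}$ into smallness: you partition $\R^d$ into $\{|y|\le|z(t)|/2\}$ and its complement and extract a single factor in $L^\infty$ using the weighted bound $\lVert\<y\>^k u_j\rVert_{L^\infty}\lesssim M_{k+2}(T)$ (via $H^2(\R^d)\hookrightarrow L^\infty(\R^d)$, $d\le3$), whereas the paper uses the two-weight inequality $\sup_x \<x\>^{-k}\<x-\eta\>^{-k}\lesssim|\eta|^{-k}$ followed by H\"older with $L^4\times L^4\to L^2$ and $H^1(\R^d)\subset L^4(\R^d)$, then integrates $|\eta^\eps(t)|^{-k}$ in time over the complement of $I^\eps(T)$. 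The two routes are equivalent in strength — both yield the gain $\eps^{k(1/2-\gamma)}$ with constants controlled by powers of $M_{k+2}(T)$ — yours being more pointwise and avoiding the $L^4$ step, the paper's avoiding the spatial partition; and your closing remark on the bookkeeping for the $\Sigma_\eps$ weights (attaching $\sqrt\eps\,y$ to a centered profile or absorbing $|x_j(t)|,|\xi_j(t)|\lesssim e^{C_0t}$ into the exponential factor) is exactly the argument the paper itself leaves at the same level of detail.
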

 \begin{proof} 
We observe that for $\eta\in \R^d$,
\begin{equation*}
  \sup_{x\in\R^d} \left(\<x\>^{-1}\<x-\eta\>^{-1}\right)\lesssim
  \frac{1}{|\eta|}. 
\end{equation*}
With $\eta^\eps(t) = \frac{x_1(t)-x_2(t)}{\sqrt\eps}$, we infer
(forgetting the sum over $\ell_1,\ell_2$), 
\begin{align*}
  \frac{1}{\eps}&\int_{[0,T]\setminus I^\eps(T)} \|\mathcal
  N_I^\eps(t)\|_{L^2(\R)}dt \lesssim \\
&\lesssim
  \int_{[0,T]\setminus I^\eps(T)}\left\lVert \<x-\eta^\eps(t)\>^{-k}\<x\>^{-k}
    \<x-\eta^\eps(t)\>^{-k}
    u_1^{\ell_1}\(t,x-\eta^\eps(t)\)u_2^{\ell_2}(t,x)\right\rVert_{L^2} \\
&\lesssim \left\lVert \<x\>^k
  u_1^{\ell_1}\right\rVert_{L^\infty([0,T];L^4)}\left\lVert 
\<x\>^{k} u_2^{\ell_2}\right\rVert_{L^\infty([0,T];L^4)}
\int_{[0,T]\setminus 
  I^\eps(T)} \frac{dt}{\lvert \eta^\eps(t)\rvert^k}.
\end{align*}
We have, for $j\in \{1,2\}$,
\begin{align*}
 \left\lVert \<x\>^k
  u_j^{\ell_j}\right\rVert_{L^\infty([0,T];L^4)}&\le \left\lVert \<x\>^k
  u_j\right\rVert_{L^\infty([0,T];L^4)} \left\lVert 
  u_j\right\rVert_{L^\infty([0,T]\times \R^d)}^{\ell_j-1}\\
&\lesssim \left\lVert \<x\>^k
  u_j\right\rVert_{L^\infty([0,T];H^1)} \left\lVert 
  u_j\right\rVert_{L^\infty([0,T];H^k)}^{\ell_1-1}
\lesssim M_{k+1}(T)^{\ell_j},
\end{align*}
where we have used $H^1(\R^d)\subset L^4(\R^d)$ since $d\le 3$. On the
other hand,
\begin{equation*}
\int_{[0,T]\setminus 
  I^\eps(T)} \frac{dt}{\lvert \eta^\eps(t)\rvert^k} \lesssim
\int_{[0,T]\setminus 
  I^\eps(T)} \frac{\eps^{k/2}}{\lvert x_1(t)-x_2(t)\rvert^k}dt\lesssim
\eps^{k(1/2-\gamma)}T. 
\end{equation*}
On $I^\eps(T)$, we simply estimate
\begin{align*}
 \frac{1}{\eps}\int_{I^\eps(T)} \|\mathcal
  N_I^\eps(t)\|_{L^2(\R)}dt &  \lesssim
  \|u_1\|_{L^\infty([0,T]\times\R)}^{\ell_1}
  \|u_2\|_{L^\infty([0,T]\times\R)}^{\ell_2-1}
  \|u_2\|_{L^1(I^\eps(T);L^2(\R))}\\
&\lesssim M_k(T)^{2\si}
\lvert I^\eps(T)\rvert \lVert u_2\|_{L^\infty([0,T];L^2(\R))}\\
&\lesssim M_k(T)^{2\si+1}
\lvert I^\eps(T)\rvert  .
\end{align*}
The $L^2$ estimate follows, without exponentially growing factor. This
factor appears when dealing with the $\Sigma_\eps$-norm. Typically,
\begin{align*}
  \|\eps \nabla \varphi_j^\eps(t)\|_{L^2(\R^d)}&\lesssim \sqrt\eps \|\nabla
  u_j(t)\|_{L^2(\R^d)} + \lvert \xi_j(t)\rvert \lVert
  u_j(t)\rVert_{L^2(\R^d)},\\
\|x\varphi_j^\eps(t)\|_{L^2(\R^d)}&\lesssim \sqrt\eps \|x
  u_j(t)\|_{L^2(\R^d)} + \lvert x_j(t)\rvert \lVert
  u_j(t)\rVert_{L^2(\R^d)}.
\end{align*}
The result then follows from the above computations, and
Lemma~\ref{lem:traj}.  
\end{proof}
 At this stage, the main difficulty is to estimate the length of
 $I^\eps(t)$. We do this in two cases: bounded $t$, and large time
 when $d=1$. 
 \subsection{Nonlinear superposition in finite time}
In the proof of Proposition~\ref{prop:bounded}, we have only used the
fact that $u^\eps\in C(\R;\Sigma)$, with estimates which are
independent of $\eps$. Recall that in the case of a single wave
packet, $\psi^\eps$ and $u^\eps$ are related through
\eqref{eq:chginc}: in the case of two wave packets, there is no such
natural rescaling. So in the case of two initial wave packets, we are
not able to prove uniform estimates for $\psi^\eps$, like in
\eqref{eq:1454}. Even to prove Proposition~\ref{prop:superposition},
which is the analogue of Proposition~\ref{prop:bounded}, we need to
use a bootstrap argument. We know that for $j\in \{1,2\}$,
\begin{equation*}
  \lVert
  \varphi_j^\eps(t)\rVert_{L^r(\R^d)} \le C(T) \eps^{-\delta(r)/2}
  ,\quad \forall t\in [0,T].
\end{equation*}
The bootstrap argument is of the form:
\begin{equation*}
  \lVert w^\eps(t)\rVert_{L^r(\R^d)} \le C(T) \eps^{-\delta(r)/2}
  ,\quad \forall t\in [0,T],
\end{equation*} 
with the same constant $C(T)$ if we wish. Repeating the computations
of \S\ref{sec:proofbounded}, we first have, for $t\in [0,T]$ and so
long as the above condition holds,
\begin{equation*}
  \|w^\eps\|_{L^\infty([0,t];L^2)}\lesssim \frac{1}{\eps}\|\mathcal
  L^\eps\|_{L^1([0,T];L^2)} + \frac{1}{\eps}\|\mathcal
  N_I^\eps\|_{L^1([0,T];L^2)} .
\end{equation*}
As we have seen in \S\ref{sec:linear}, $(\eps\nabla w^\eps,x w^\eps)$
solves a system which is formally analogous to the system satisfied by
$(A^\eps w^\eps,B^\eps w^\eps)$. Therefore, under the bootstrap
condition, we come up with
 \begin{equation*}
  \|w^\eps\|_{L^\infty([0,t];\Sigma_\eps)}\lesssim \frac{1}{\eps}\|\mathcal
  L^\eps\|_{L^1([0,T];\Sigma_\eps)} + \frac{1}{\eps}\|\mathcal
  N_I^\eps\|_{L^1([0,T];\Sigma_\eps)} .
\end{equation*}
We easily estimate
\begin{equation*}
  \frac{1}{\eps}\|\mathcal
  L^\eps\|_{L^1([0,T];\Sigma_\eps)} \lesssim \sqrt \eps,
\end{equation*}
so in view of Lemma~\ref{lem:1655}, the point is to estimate the
length of $I^\eps(T)$.  
 \begin{lemma}\label{lem:1652}
   For $T>0$ (independent of $\eps$), we have
   \begin{equation*}
     \lvert I^\eps(T)\rvert= \O\(\eps^{\gamma}\),
   \end{equation*}
where $I^\eps(T)$ is defined by \eqref{eq:ieps}.
 \end{lemma}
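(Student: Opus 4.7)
The plan is to study the squared gap $F(t)=|\Delta(t)|^2$ with $\Delta(t)=x_1(t)-x_2(t)$, and to show that $F$ is quantitatively strictly convex wherever it is small; this will force $\{F\le \eps^{2\gamma}\}\cap[0,T]$ to consist of finitely many intervals of length $\O(\eps^\gamma)$.

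The starting point is that uniqueness for the Hamiltonian flow \eqref{eq:traj}, together with the assumption $(x_1,\xi_1)\neq (x_2,\xi_2)$, prevents the two trajectories from ever coinciding in phase space. Hence $t\mapsto |\Delta(t)|^2+|\dot\Delta(t)|^2$ is continuous and strictly positive on $[0,T]$, and by compactness bounded below by some $c_T>0$. Thus on $\{F\le \eta_0\}$ with $\eta_0\le c_T/2$, necessarily $|\dot\Delta|^2\ge c_T/2$. By Assumption~\ref{hyp:V}, $\nabla V$ is globally Lipschitz with some constant $L$, so $\ddot\Delta(t)=-(\nabla V(x_1(t))-\nabla V(x_2(t)))$ gives $|\ddot\Delta(t)|\le L|\Delta(t)|$. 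Differentiating $F$ twice,
\begin{equation*}
F''(t)=2|\dot\Delta(t)|^2+2\Delta(t)\cdot\ddot\Delta(t)\ge 2|\dot\Delta(t)|^2-2L\, F(t),
\end{equation*}
and choosing $\eta_0$ sufficiently small (depending only on $c_T$ and $L$) yields $F''\ge c_T/2$ on $\{F\le \eta_0\}$.

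I expect the subtle point to be counting the connected components of $\{F\le \eta_0\}\cap[0,T]$. If their endpoints accumulated at some $t^*\in[0,T]$, then $F(t^*)=\eta_0$; either $F'(t^*)\neq 0$, in which case monotonicity of $F$ near $t^*$ forces $t^*$ to lie on the boundary of a single component, contradicting accumulation, or $F'(t^*)=0$, in which case $F''(t^*)\ge c_T/2>0$ makes $t^*$ an isolated strict minimum of $F-\eta_0$, again contradicting accumulation. Hence there are only finitely many components $J_1,\ldots,J_{N_T}$, with $N_T$ depending on $T$ but not on $\eps$. On each $J_i$, strict convexity yields $F(t)\ge (c_T/4)(t-t_i^*)^2$ around the unique minimizer $t_i^*\in J_i$, so $\{F\le \eps^{2\gamma}\}\cap J_i$ is contained in an interval of length at most $4\eps^\gamma/\sqrt{c_T}$. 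Summing over $i$ gives $|I^\eps(T)|=\O(\eps^\gamma)$, with a constant depending on $T$ but not on $\eps$; the argument works in any dimension.
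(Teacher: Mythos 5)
Your proof is correct, and it takes a somewhat different route from the paper's, although both rest on the same underlying fact: by uniqueness of the Hamiltonian flow the two trajectories never coincide in phase space, so on $[0,T]$ the quantity $|x_1-x_2|^2+|\dot x_1-\dot x_2|^2$ is bounded below, i.e. closeness in position forces a velocity gap. The paper exploits this at first order: it shows that the zeros of $x_1-x_2$ are isolated, hence finitely many in $[0,T]$, that for small $\eps$ the set $I^\eps(T)$ is contained in small neighborhoods of these crossing times on which $|\dot x_1-\dot x_2|\ge m>0$, and then a mean value/Taylor argument gives $|t-t_j|\lesssim \eps^\gamma$ on each such neighborhood. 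You instead work at second order with $F=|x_1-x_2|^2$: using the phase-space lower bound together with the global Lipschitz bound on $\nabla V$ (Assumption~\ref{hyp:V}) you get the uniform convexity $F''\ge c_T/2$ on the sublevel set $\{F\le \eta_0\}$, count its connected components by the monotonicity/strict-minimum dichotomy at an alleged accumulation point, and conclude by quadratic growth of $F$ away from the minimizer of each component. What your version buys: it avoids pinpointing the crossing times and, in particular, avoids the vector-valued mean value formula written in the paper's proof (which as stated is a one-dimensional identity and needs an integral form in dimension $d\ge 2$), so it applies verbatim in any dimension $d\le 3$ covered by Proposition~\ref{prop:superposition}; it also covers at once components on which $x_1-x_2$ does not vanish but merely gets $\eps^\gamma$-close. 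The paper's argument is shorter and more geometric, reading the estimate directly off the transversality of the crossings. Both yield $|I^\eps(T)|=\O(\eps^\gamma)$ with constants depending on $T$ but not on $\eps$, which is all Lemma~\ref{lem:1652} requires.
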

 \begin{proof}
 The key remark is that 
 since $(x_1,\xi_1)\not=(x_2,\xi_2)$, the trajectories
 $x_1(t)$ and $x_2(t)$ may cross only in isolated points: by
 uniqueness, if $x_1(t)=x_2(t)$, then $\dot x_1(t)\not = \dot
 x_2(t)$. Therefore, 
 there is only a finite numbers of such points in the interval
 $[0,T]$:
 \begin{equation*}
\(x_1(\cdot)-x_2(\cdot)\)^{-1}(0)\cap [0,T] = \{t_j\}_{1\le j\le
     J},\quad \text{where }J=J(T). 
 \end{equation*}
If we had $J=\infty$, then by compactness of $[0,T]$, a
subsequence of $(t_j)_j$ would converge to some $\tau\in [0,T]$, with
 $x_1(\tau)=x_2(\tau)$. By uniqueness for the Hamiltonian flow,
$\dot x_1(\tau)\not = \dot x_2(\tau)$: $\tau$ cannot be the limit of
times where $x_1(t_j)=x_2(t_j)$. 
\smallbreak

By uniqueness for the Hamiltonian flow, continuity and compactness,
there exists $\delta>0$ such that
\begin{equation*}
  \inf\{ \lvert \dot x_1(t)-\dot x_2(t)\rvert\ ;\  t\in \mathcal
  I(\delta,T)\}=m >0,\quad \text{where }
\mathcal  I(\delta,T)  =\bigcup_{j=1}^J [t_j-\delta,t_j+\delta],
\end{equation*}
and there exists $\eps(\delta,T)>0$ such that for $\eps\in
]0,\eps(\delta,T)]$, $I^\eps(T)\subset \mathcal I(\delta,T)$.
\smallbreak

Let $t\in I^\eps(T)\cap [t_j-\delta,t_j+\delta]$. Taylor's formula
yields
\begin{equation*}
  x_1(t)-x_2(t) = x_1(t_j)-x_2(t_j) +(t-t_j)\(\dot x_1(\tau)-\dot
  x_2(\tau)\),\quad \tau \in [t_j-\delta,t_j+\delta].
\end{equation*}
We infer
\begin{equation*}
  \eps^\gamma \ge \lvert x_1(t)-x_2(t)\rvert \ge \lvert t-t_j\rvert m,
\end{equation*}
and Lemma~\ref{lem:1652} follows. 
 \end{proof}
Back to the bootstrap argument, we infer
\begin{equation*}
 \|w^\eps\|_{L^\infty([0,t];\Sigma_\eps)}\lesssim \sqrt\eps +
 \eps^{k(1/2-\gamma)}+\eps^\gamma.  
\end{equation*}
Fix $\gamma\in ]0,1/2[$.  By taking $k$ sufficiently large in
Lemma~\ref{lem:1655}, this yields 
\begin{equation*}
 \|w^\eps\|_{L^\infty([0,t];\Sigma_\eps)}\lesssim \eps^\gamma.  
\end{equation*}
Gagliardo--Nirenberg inequality yields
\begin{equation*}
  \|w^\eps(t)\|_{L^r}\lesssim
  \eps^{-\delta(r)}\|w^\eps(t)\|_{L^2}^{1-\delta(r)} \|\eps \nabla
  w^\eps(t)\|_{L^2}^{\delta(r)} \lesssim \eps^{-\delta(r)}
  \|w^\eps\|_{L^\infty([0,t];\Sigma_\eps)} \lesssim
  \eps^{\gamma-\delta(r)}. 
\end{equation*}
To close the argument, we note
\begin{equation*}
  \eps^{\gamma-\delta(r)}\ll \eps^{-\delta(r)/2} \text{ provided }
  \eps\ll1\text{ and } \gamma>\frac{\delta(r)}{2}.
\end{equation*}
The last condition is equivalent to $\gamma>\frac{d\si}{4\si+4}$,
which is compatible with $\gamma<1/2$ since the nonlinearity is
energy-subcritical. 
 \subsection{Nonlinear superposition for large time}
 
 Things become more complicated when $T$ is large. We first
 need to control $M_k$: this is achieved  assuming 
 $(Exp)_k$, and we have 
 \begin{equation*}
   M_k(t)\lesssim e^{Ct}.
 \end{equation*}
The main point is to estimate  $| I_\eps|$. This is achieved
thanks to the following proposition, whose proof relies heavily on the
fact that the space variable is one-dimensional. 
\begin{proposition}\label{prop:2traj} 
Under the assumptions of Theorem~\ref{theo:superposition}, there exist
 $C,C_0>0$ independent of $\eps$ such that
\begin{equation*}
  |I_\eps(t)|\lesssim \eps^\gamma e^{C_0t} |E_1-E_2|^{-2},\quad
  0\le t\le C\log\frac{1}{\eps}.
\end{equation*}
\end{proposition}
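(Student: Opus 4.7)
The plan is to set $f(s) := x_1(s) - x_2(s)$ and decompose $|I^\eps(t)|$ as (number of connected components) $\times$ (uniform upper bound on the length of each component), showing that the first factor is $\lesssim e^{C_0 t}/|E_1-E_2|$ and the second $\lesssim \eps^\gamma e^{C_0 t}/|E_1-E_2|$, so that the product gives the announced bound after renaming $C_0$.

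The first key step is to lower bound $|f'(s)| = |\xi_1(s)-\xi_2(s)|$ on $I^\eps(t)$. Subtracting the two energy conservation identities gives
\[
E_1 - E_2 \;=\; \tfrac{1}{2}\, f'(s)\bigl(\xi_1(s)+\xi_2(s)\bigr) + V(x_1(s)) - V(x_2(s)).
\]
Since $V$ is subquadratic, $|V'|\lesssim \langle y\rangle$, hence on $I^\eps(t)$ combined with Lemma~\ref{lem:traj} we have $|V(x_1(s))-V(x_2(s))|\lesssim \eps^\gamma e^{C_0 s}$. For $s\le t\le C\log(1/\eps)$ with $C$ small enough (allowed to depend on $|E_1-E_2|$), this perturbation stays $\le |E_1-E_2|/2$; together with the trivial bound $|\xi_1+\xi_2|\lesssim e^{C_0 s}$, we deduce $|f'(s)| \gtrsim |E_1-E_2|\, e^{-C_0 s}$ on $I^\eps(t)$. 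In particular $f$ is strictly monotone on each connected component of $I^\eps(t)$, and since $|f|\le \eps^\gamma$ on such a component, the substitution $y=f(s)$ bounds its length by $2\eps^\gamma/\inf|f'| \lesssim \eps^\gamma e^{C_0 t}/|E_1-E_2|$.

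To count the components, observe that between two consecutive ones $|f|>\eps^\gamma$, so $f$ keeps a constant sign on the gap; combined with the strict monotonicity on each component, this forces adjacent interior components to have opposite signs of $f'$, so each interior component must contain a zero of $f$. Hence the number of components in $[0,t]$ is at most $2+N_0$, where $N_0$ is the number of zeros of $f$ in $[0,t]$. To bound $N_0$, apply Rolle between two consecutive zeros of $f$ to find $\tau$ with $\xi_1(\tau)=\xi_2(\tau)$; energy conservation at $\tau$ then reads $|V(x_1(\tau))-V(x_2(\tau))|=|E_1-E_2|$, and the linear growth of $V'$ yields $|f(\tau)|\gtrsim |E_1-E_2|\,e^{-C_0\tau}$. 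Combined with the trivial $|f'|\lesssim e^{C_0 s}$, consecutive zeros of $f$ are at least $c|E_1-E_2|\,e^{-2C_0 t}$ apart, so $N_0 \lesssim t\, e^{2C_0 t}/|E_1-E_2|$, which is absorbed into $e^{C_0' t}/|E_1-E_2|$ after enlarging the exponent.

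Multiplying the per-component length bound by the number of components gives the claim, with $C_0$ replaced by $C_0+C_0'$. The main delicate point is the first step: we crucially need the perturbation $V(x_1)-V(x_2)$ on $I^\eps(t)$ to remain small compared to $|E_1-E_2|$, which dictates the allowed size of $C$ in the range $t\le C\log(1/\eps)$ and is the source of the $|E_1-E_2|^{-2}$ degeneracy in the final bound; the counting step, which is purely geometric once Step~1 is available, is essentially free.
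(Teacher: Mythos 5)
Your proof is correct, and while the skeleton (number of connected components times the maximal component length, with the length bound coming from the energy identity $E_1-E_2=\tfrac12(\xi_1-\xi_2)(\xi_1+\xi_2)+V(x_1)-V(x_2)$ and the smallness of $V(x_1)-V(x_2)$ on $I^\eps(t)$) coincides with the paper's, your counting step is genuinely different. The paper bounds from below the length of the \emph{gap} intervals where $|x_1-x_2|\ge \eps^\gamma$: across such a gap the sign of $\xi_1-\xi_2$ flips, its total variation is $\gtrsim e^{-Ct}|E_1-E_2|$ by the same energy argument at the two gap endpoints, and $|\tfrac{d}{dt}(\xi_1-\xi_2)|=|V'(x_1)-V'(x_2)|\lesssim e^{Ct}$, so each gap has length $\gtrsim e^{-2Ct}|E_1-E_2|$ and the number of components is $\lesssim t e^{2Ct}|E_1-E_2|^{-1}$. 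You instead show each interior component contains a zero of $f=x_1-x_2$ and count zeros by Rolle: between consecutive zeros there is $\tau$ with $\xi_1(\tau)=\xi_2(\tau)$, where energy conservation gives $|V(x_1(\tau))-V(x_2(\tau))|=|E_1-E_2|$, hence $|f(\tau)|\gtrsim |E_1-E_2|e^{-C_0\tau}$, and the bound $|f'|\lesssim e^{C_0 t}$ separates the zeros by $\gtrsim |E_1-E_2|e^{-2C_0 t}$. Both mechanisms yield the same $e^{Ct}|E_1-E_2|^{-1}$ count and hence the same final bound; yours is arguably a bit cleaner in that it avoids the paper's sign discussion at the gap endpoints (where the strict inequalities $\xi_1-\xi_2>0$, $<0$ asserted there should really be non-strict), while the paper's avoids introducing zeros of $f$ altogether. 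Two small points in your write-up: the step ``adjacent components have opposite signs of $f'$, so each interior component contains a zero'' is more directly justified by noting that on an interior component $f$ is strictly monotone and satisfies $|f|=\eps^\gamma$ at both endpoints, forcing endpoint values $+\eps^\gamma$ and $-\eps^\gamma$; and the smallness you need is $\eps^\gamma e^{C_0 t}\le \tfrac12|E_1-E_2|$ rather than $\ll 1$, which your time restriction handles (the same slight imprecision appears in the paper) and which only shifts the admissible constant $C$ by a quantity depending on the fixed data $E_1,E_2$, as allowed by the statement.
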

\begin{proof}[Proof of Theorem~\ref{theo:superposition}]
  Before proving Proposition~\ref{prop:2traj}, we show why this is
  enough to infer 
  Theorem~\ref{theo:superposition}. By Lemma~\ref{lem:1655}, we have,
  if $(Exp)_k$ is satisfied,
  \begin{equation*}
    \frac{1}{\eps}\|\mathcal N_I^\eps\|_{L^1([0,t];\Sigma_\eps)}\lesssim
    e^{Ct}\( t \eps^{(k-2)(1/2-\gamma)} + \eps^\gamma e^{C_0t}\)\lesssim
    \(\eps^{(k-2)(1/2-\gamma)} + \eps^\gamma \)e^{Ct}. 
  \end{equation*}
Optimizing in $\gamma$, we require $(k-2)(1/2-\gamma)=\gamma$, that is 
\begin{equation*}
  \gamma = \frac{k-2}{2k-2}. 
\end{equation*}
We can thus resume the bootstrap arguments as in \S\ref{sec:consistency} and
\S\ref{sec:cubic}, respectively. The key is to notice that this works like
in the previous paragraph, since 
\begin{equation*}
  \gamma = \frac{k-2}{2k-2}> \frac{\si}{4\si+4}\quad (k\ge 4). 
\end{equation*}
This yields Theorem~\ref{theo:superposition}
\end{proof}

\begin{proof}[Proof of Proposition~\ref{prop:2traj}]
  We consider 
 $J^\eps(t)$  an interval of maximal length included in $I^\eps(t)$ and
 $N^\eps(t)$ the number of such intervals. The result comes from the
 estimate
 $$|I^\eps(t)|\le N^\eps(t) \times \max |J^\eps(t)|, $$
 with 
\begin{align}\label{J}
 |J^\eps(t)| & \lesssim  \eps^\gamma e^{Ct}\lvert E_1-E_2\rvert^{-1},\\
 \label{N}
 N^\eps(t) & \lesssim   t e^{2Ct} \lvert E_1-E_2\rvert^{-1}\lesssim
 e^{3Ct} \lvert E_1-E_2\rvert^{-1}  .
 \end{align}
 We first prove  prove \eqref{J}.  Let
$t_1,t_2\in J^\eps(t)$. There exists  $t^*\in [t_1,t_2]$ such that 
 $$\left|\left(x_1(t_1)-x_2(t_1)\right)
   -\left(x_1(t_2)-x_2(t_2)\right)\right|=
|t_2-t_1\rvert \left\lvert\xi_1(t^*)-\xi_2(t^*)\right|,$$ 
whence
 $$ |t_1-t_2|\le |\xi_1(t^*)-\xi_2(t^*)|^{-1} \times 2 \eps^\gamma.$$
 On the other hand,
 \begin{equation*}
  |\xi_1(t^*)-\xi_2(t^*)|\ge \left\lvert \lvert\xi_1(t^*)\rvert-\lvert
    \xi_2(t^*)\rvert\right\rvert
\ge \frac{\left| |\xi_1(t^*)|^2-
|\xi_2(t^*)|^2\right|}{|\xi_1(t^*)|+|\xi_2(t^*)|}.  
 \end{equation*}
Using 
\begin{align*} 
|\xi_1(t^*)|+|\xi_2(t^*)| & \lesssim  e^{Ct},\\
|\xi_1(t^*)|^2-|\xi_2(t^*)|^2 & = 
2\left(E_1-E_2-V(x_1(t^*))+V(x_2(t^*))\right),\\ 
\left|V(x_1(t^*))-V(x_2(t^*))\right| & \le   \eps^\gamma {\rm e}^{Ct},
\end{align*}
 we get
 $$||\xi_1(t^*)|^2-|\xi_2(t^*)|^2|\gtrsim |E_1-E_2 |-\eps^\gamma e^{Ct},$$
 whence
 $$ |t_1-t_2|  \lesssim \eps^\gamma{\rm e}^{Ct}  |E_1-E_2|^{-1},$$
 provided $\eps^\gamma e^{Ct}\ll 1$.
 \smallbreak

Let us now prove \eqref{N}. We use that as $t$ is large, $N^\eps(t)$ is
comparable to the number of distinct intervals of maximal size where
$|x_1(t)-x_2(t)|\ge \eps^\gamma$. 
 We consider $J'_\eps=[t'_1,t'_2]$ such an interval . We have 
 $$|x_1(t'_1)-x_2(t'_1)|=|x_1(t'_2)-x_2(t'_2)|=\eps^\gamma, \text{ and
   }\forall t\in[t'_1,t'_2],\quad|x_1(t)-x_2(t)|\ge\eps^\gamma.$$
Therefore, for $t\in[t'_1,t'_2]$, the quantity $x_1(t)-x_2(t)$ has a
constant sign: we suppose that $x_1(t)-x_2(t)$ is positive. We then have  
$$\xi_1(t'_1)-\xi_2(t'_1)>0\;\;{\rm and}\;\;\xi_1(t'_2)-\xi_2(t'_2)<0.$$
Using the exponential control of $V'(x_j(t))$ for $j\in\{1,2\}$, we obtain
$$\left(\xi_1(t'_1)-\xi_2(t'_1)\right)-
\left(\xi_1(t'_2)-\xi_2(t'_2)\right)\lesssim{\rm
  e}^{Ct}|t'_1-t'_2|.$$ 
We write
\begin{align*}
  \xi_1(t'_1)-\xi_2(t'_1)=|\xi_1(t'_1)-\xi_2(t'_1)|&\ge
  \frac{\left||\xi_1(t'_1)|^2- |\xi_2(t'_1)|^2\right|}{|\xi_1(t'_1)|+
    |\xi_2(t'_1)|}\\
&\gtrsim
  e^{-Ct}\left||\xi_1(t'_1)|^2-|\xi_2(t'_1)|^2\right|,\\
-\xi_1(t'_2)+\xi_2(t'_2)=|\xi'_1(t_2)-\xi_2(t'_2)|& \ge
\frac{\left||\xi_1(t'_2)|^2- |\xi_2(t'_2)|^2\right|}{|\xi_1(t'_2)|+
|\xi_2(t'_2)|}\\
&\gtrsim
e^{-Ct}\left||\xi_1(t'_2)|^2-|\xi_2(t'_2)|^2\right|. 
\end{align*}

Besides, in view of 
\begin{align*}
\frac{1}{2} \left( |\xi_1(t'_1)|^2-|\xi_2(t'_2)|^2\right) & =  E_1-E_2
-V(x_1(t'_1))+V(x_2(t'_1))\\ 
 & =  E_1-E_2-V'(x^*)\left[x_1(t'_1)-x_2(t'_1)\right]
 \end{align*}
with $x^*\in[x_2(t'_1),x_1(t'_1)]$, we have
$$\left|V'(x^*)\left[x_1(t'_1)-x_2(t'_1)\right]\right|\lesssim e^{Ct}
\left[x_1(t'_1)-x_2(t'_1)\right]\lesssim \eps^\gamma e^{Ct}.$$
Therefore, if $\eps^\gamma e^{Ct}\ll 1$, we have  $E_1-E_2>0$ and
$$\frac{1}{2}
\left| |\xi_1(t'_1)|^2-|\xi_2(t'_)|^2\right|\ge \frac{1}{2}( E_1-E_2).$$
The same  holds for  $t'_2$, which yields
$$\left(\xi_1(t'_1)-\xi_2(t'_1)\right)-\left(\xi_1(t'_2)-
\xi_2(t'_2)\right)\gtrsim e^{-Ct}(E_1-E_2),$$ 
whence the existence of a constant $c>0$ such that 
$$|t'_1-t'_2|\ge c e^{-2CT} (E_1-E_2)\;\;{\rm and } \;\;|J'_\eps|\ge
c\, {\rm e}^{-2CT} (E_1-E_2).$$ 
The number  $\tilde N^\eps(t)$ of intervals of the type $J'_\eps$ satisfies
$$\tilde N^\eps(t)\times c e^{-2Ct} (E_1-E_2)\le t$$
whence the second point of the claim.
\end{proof}

\bibliographystyle{amsplain}
\bibliography{biblio}

\end{document}